\title{Knapsack problems for wreath products}
\author[M.~Ganardi]{Moses Ganardi}
\author[D.~K\"onig]{Daniel K\"onig}
\author[M.~Lohrey]{Markus Lohrey}
\address{Universit\"at Siegen, Germany\\
  \texttt{\{ganardi,koenig,lohrey\}@eti.uni-siegen.de}}
\author[G.~Zetzsche]{Georg Zetzsche}
\address{LSV, CNRS \& ENS Paris-Saclay, France\\
\texttt{zetzsche@lsv.fr}}
 \thanks{The fourth author is supported by a fellowship within the Postdoc-Program of the German Academic Exchange Service (DAAD) and by
Labex DigiCosme, Univ.\ Paris-Saclay, project VERICONISS.}
\subjclass{20F10}
\keywords{knapsack, wreath products, decision problems in group theory}
\theoremstyle{plain}
\newtheorem{thm}{Theorem}[section]
\newtheorem{prop}[thm]{Proposition}
\newtheorem{lem}[thm]{Lemma}
\newtheorem{cor}[thm]{Corollary}
\theoremstyle{definition}
\newtheorem{ex}[thm]{Example}
\Crefname{thm}{Theorem}{Theorems}
\Crefname{lem}{Lemma}{Lemmas}
\Crefname{prop}{Proposition}{Propositions}
\Crefname{cor}{Corollary}{Corollaries}
\Crefname{ex}{Example}{Examples}
\Crefname{rem}{Remark}{Remarks}
\DeclareMathOperator{\id}{id}
\DeclareMathOperator{\ord}{ord}
\newcommand{\Knapsack}{\textsc{KP}}
\newcommand{\ExpEq}{\textsc{ExpEq}}
\newcommand{\MEM}{\textsc{Membership}}
\newcommand{\NP}{\mathsf{NP}} 
\newcommand{\TC}{\mathsf{TC}}
\newcommand{\AC}{\mathsf{AC}}
\newcommand{\WP}{\textsc{WP}}
\newcommand{\supp}{\mathsf{supp}} 
\newcommand{\N}{\mathbb{N}}
\newcommand{\Z}{\mathbb{Z}}
\newcommand{\DHB}{H_3(\Z)}
\newcommand{\rest}{\mathord\restriction}
\newcommand{\Powerset}[1]{\mathcal{P}(#1)}
\newcommand{\Sol}{\mathsf{Sol}}
\newcommand{\support}{\mathsf{supp}}
\DeclareDocumentCommand{\orderprod}{O{} m}{%
  \mathchoice{%   In display mode, add order as superscript, not on top
    \sideset{}{^{#1}}\prod_{#2}
  }{%             In inline mode, use normal syntax (will be superscript anyway)
    \prod_{#2}^{#1}
  }{%             In script mode, use normal syntax (will be superscript anyway)
    \prod_{#2}^{#1}
  }{%             In script script mode, use normal syntax (will be superscript anyway)
    \prod_{#2}^{#1}
  }
}
\newcommand{\varx}[1]{x_{#1}} 
\newcommand{\varCompare}[1]{z_{#1}} 
\newcommand{\varMove}[1]{y_{#1}}
\newcommand{\thickhline}{%
    \noalign {\ifnum 0=`}\fi \hrule height 1pt
    \futurelet \reserved@a \@xhline
}
\newcolumntype{"}{@{\hskip\tabcolsep\vrule width 1pt\hskip\tabcolsep}}
\begin{document}

\maketitle

\begin{abstract}
In recent years, knapsack problems for (in general non-commutative) groups have attracted attention.
In this paper, the knapsack problem for wreath products is studied. It turns out that decidability of knapsack 
is not preserved under wreath product. On the other hand, the class of knapsack-semilinear groups, where solutions
sets of knapsack equations are effectively semilinear, is closed under wreath product.  As a consequence, we obtain the decidability of knapsack for free solvable groups. Finally, it is shown that for every non-trivial abelian group $G$, knapsack (as well as the related subset sum problem)
for the wreath product $G \wr \Z$ is $\NP$-complete.
\end{abstract}

\section{Introduction}

In \cite{MyNiUs14}, Myasnikov, Nikolaev, and Ushakov began  the investigation of classical 
discrete optimization problems, which are formulated over the integers,
for arbitrary (possibly non-commutative) groups. The general goal of this line of research is to study 
to what extent results from the commutative setting can be transferred to the non-commutative setting.
Among other problems, Myasnikov et al.~introduced for a finitely generated group $G$ 
the {\em knapsack problem} and the {\em subset sum problem}.
The input for the knapsack problem is a sequence of group elements $g_1, \ldots, g_k, g \in G$ (specified
by finite words over the generators of $G$) and it is asked whether there exists a solution
$(x_1, \ldots, x_k) \in \mathbb{N}^k$
of the equation $g_1^{x_1} \cdots g_k^{x_k} = g$. For the subset sum problem one restricts the solution
to $\{0,1\}^k$. For the particular case $G = \mathbb{Z}$  (where the additive notation 
$x_1 \cdot g_1 + \cdots + x_k \cdot g_k = g$ is usually preferred)
these problems are {\sf NP}-complete (resp., $\mathsf{TC}^0$-complete)
if the numbers $g_1, \ldots, g_k,g$ are 
encoded in binary representation \cite{Karp72,Haa11} (resp., unary notation
\cite{ElberfeldJT11}).

Another motivation is that decidability of knapsack for a group $G$ implies that the membership 
problem for polycyclic subgroups of $G$ is decidable. This follows from the well-known fact
that every polycyclic group $A$ has a generating set $\{a_1,\ldots, a_k\}$ such that every
element of $A$ can be written as $a_1^{n_1} \cdots a_k^{n_k}$ for $n_1, \ldots, n_k \in \N$, see e.g. \cite[Chapter~9]{Sims}.

In \cite{MyNiUs14}, Myasnikov et al.~encode elements of the finitely generated group $G$ by words over the group generators
and their inverses, which corresponds to the unary encoding of integers. There is also an encoding of words 
that corresponds to the binary encoding of integers, so called straight-line programs, and knapsack problems under this
encodings have been studied in \cite{LohreyZetzsche2016a}. In this paper, we only consider the case where input words are explicitly represented.
Here is a (non-complete) list of known results concerning knapsack and subset sum problems:
\begin{itemize}
\item Subset sum and knapsack can be solved in polynomial
time for every hyperbolic group \cite{MyNiUs14}. In \cite{FrenkelNU15} this result was extended to free products of any
number of hyperbolic groups and finitely generated abelian groups.
\item For every virtually nilpotent group, subset sum belongs to {\sf NL} (nondeterministic logspace) \cite{KoenigLohreyZetzsche2015a}.
On the other hand, there are nilpotent groups of class $2$ for which knapsack is undecidable. Concrete examples
are direct products of sufficiently many copies of the discrete Heisenberg group $H_3(\mathbb{Z})$ \cite{KoenigLohreyZetzsche2015a},
and free nilpotent groups of class $2$ and sufficiently high rank \cite{MiTr17}.
\item Knapsack for the discrete Heisenberg group $H_3(\mathbb{Z})$ is decidable \cite{KoenigLohreyZetzsche2015a}. In particular, 
together with the previous point it follows that decidability
of knapsack is not preserved under direct products.
\item For the following groups, subset sum is {\sf NP}-complete (whereas the word problem can be solved in polynomial time):
free metabelian non-abelian groups of finite rank, the wreath product $\mathbb{Z} \wr \mathbb{Z}$, Thompson's group $F$,
the Baumslag-Solitar group $\mathrm{BS}(1,2)$ \cite{MyNiUs14}, and every polycyclic group that is not virtually nilpotent  \cite{NiUs16}.
\item Knapsack is decidable for every co-context-free group (a group is co-context-free if the set of all words over the generators
that do not represent the group identity is a context-free language) \cite{KoenigLohreyZetzsche2015a}.
\item Knapsack belongs to $\NP$ for every virtually special group \cite{LohreyZetzsche2016a}. 
A group is virtually special if it is a finite extension of a subgroup
of a graph group. For graph groups (also known as right-angled Artin groups) a complete classification of the complexity
of knapsack was obtained in \cite{LohreyZ17}: If the underlying graph contains an induced path or cycle on 4 nodes, then knapsack
is $\NP$-complete; in all other cases knapsack can be solved in polynomial time (even in {\sf LogCFL}).
\item Decidability of knapsack is preserved under finite extensions, HNN-extensions over finite associated subgroups 
and amalgamated free products over finite subgroups \cite{LohreyZetzsche2016a}.
\end{itemize}
In this paper, we study the knapsack problem for wreath products.  
The wreath product is a fundamental construction in
group theory and semigroup theory, see \cref{sec-wreath} for the definition.
An important application of wreath products in group theory 
is the Magnus embedding theorem \cite{Mag39}, which allows to embed the 
quotient group $F_k/[N,N]$ into the wreath product
$\mathbb{Z}^k \wr (F_k/N)$, where $F_k$ is a free group of rank $k$ and $N$ is a normal
subgroup of $F_k$. 
From the algorithmic point of view, wreath products have some nice properties: The word problem
for a wreath product $G \wr H$ is $\AC^0$-reducible to the word problems for the factors $G$ and $H$,
and the conjugacy problem for $G \wr H$ is $\TC^0$-reducible to the conjugacy problems for $G$ and $H$
and the so called power problem for $H$ \cite{MiasnikovVW17}. 

As in the case of direct products, it turns out that decidability of knapsack is not preserved under wreath products: For this
we consider direct products of the form $\DHB\times\Z^\ell$, where $\DHB$ is the discrete 3-dimensional Heisenberg group.
It was shown in \cite{KoenigLohreyZetzsche2015a} that for every $\ell \geq 0$, knapsack is decidable for $\DHB\times\Z^\ell$.
We prove in \cref{undecidability} that for every non-trivial group $G$ and every sufficiently large $\ell$, 
knapsack for $G \wr (\DHB\times\Z^\ell)$ is undecidable. 
% Wie die Reduktion geht ist vielleicht nicht so wichtig fuer die Introduction?
%For this,
%we exhibit a reduction from knapsack for $\DHB^m$, which is undecidable for sufficiently large $m$ \cite{KoenigLohreyZetzsche2015a}.

By the above discussion, we need stronger assumptions on $G$ and $H$ to obtain decidability of knapsack for $G \wr H$.
We exhibit a very weak condition on $G$ and $H$, knapsack-semilinearity, which is sufficient for decidability of knapsack for $G\wr H$.
 A finitely generated group $G$ is knapsack-semilinear 
if for every knapsack equation, the set of all solutions (a solution can be seen as an vector of natural numbers) is effectively semilinear.

Clearly, for every knapsack-semilinear group, the knapsack problem is
decidable. While the converse is not true, the class of knapsack-semilinear groups is extraordinarily wide.
The simplest examples are finitely generated abelian groups, but 
it also includes the rich class of virtually special groups \cite{LohreyZetzsche2016a}, all hyperbolic groups (see~\cref{appendix-hyperbolic}), and all co-context-free groups~\cite{KoenigLohreyZetzsche2015a}. Furthermore, it is known to be closed under direct products (an easy observation), finite extensions, 
HNN-extensions over finite associated subgroups 
and amalgamated free products over finite subgroups  (the last three closure properties 
are simple extensions of the transfer theorems in~\cite{LohreyZetzsche2016a}).
In fact, the only non-knapsack-semilinear groups with a decidable knapsack problem that we are aware of 
are the groups $\DHB\times\Z^n$.

We prove in \cref{decidability} that the class of knapsack-semilinear groups is closed under wreath products.
As a direct consequence of the Magnus embedding, it follows that knapsack is decidable for every free solvable group. 
Recall, that in contrast, knapsack for free nilpotent groups is in general undecidable \cite{MiTr17}.
 
Finally, we consider the complexity of knapsack for wreath products. 
We prove that for every non-trivial finitely generated abelian group $G$,  knapsack for 
$G \wr \Z$ is  $\NP$-complete (the hard part is membership in $\NP$).
This result includes important special cases like for instance the lamplighter group $\Z_2 \wr \Z$ and $\Z \wr \Z$.
Wreath products of the form $G \wr \Z$ with $G$ abelian turn out to be important in connection with subgroup 
distortion \cite{DaOl11}.  Our proof also shows that for every non-trivial finitely generated abelian group $G$,
the subset sum problem for $G \wr \Z$ is $\NP$-complete. In \cite{MyNiUs14} this result is only shown 
for infinite abelian groups $G$.

\section{Preliminaries}

We assume standard notions concerning groups. A group $G$ is {\em
  finitely generated} if there exists a finite subset $\Sigma
\subseteq G$ such that every element $g \in G$ can be written as $g =
a_1 a_2 \cdots a_n$ with $a_1, a_2, \ldots, a_n \in \Sigma$.  We also
say that the word $a_1 a_2 \cdots a_n \in \Sigma^*$ evaluates to $g$
(or represents~$g$).  The set $\Sigma$ is called a finite generating
set of $G$. We always assume that $\Sigma$ is symmetric in the sense
that $a \in \Sigma$ implies $a^{-1} \in \Sigma$.  An element $g\in G$
is called \emph{torsion element} if there is an $n\ge 1$ with $g^n=1$.
The smallest such $n$ is the \emph{order} of $g$ and denoted
$\ord(g)$. If $g$ is not a torsion element, we set $\ord(g)=\infty$.

A set of vectors $A \subseteq \mathbb{N}^k$ is {\em linear} if there exist vectors
$v_0, \ldots, v_n \in \mathbb{N}^k$ such that 
\[
A = \{ v_0 + \lambda_1 \cdot v_1 + \cdots +  \lambda_n \cdot v_n \mid \lambda_1,\ldots,\lambda_n\in\N\} .
\]
The tuple of vectors $(v_0, \ldots, v_n)$ is a \emph{linear represention} of $A$.
A set $A \subseteq \mathbb{N}^k$ is {\em semilinear} if it is a finite union of linear sets $A_1, \ldots, A_m$.
A {\em semilinear representation} of $A$ is a list of linear representations for the linear sets $A_1, \ldots, A_m$.
It is well-known that the semilinear subsets of $\N^k$ 
are exactly the sets definable in {\em Presburger arithmetic}. These are 
those sets that can be defined with a first-order formula $\varphi(x_1, \ldots, x_k)$ over the structure
$(\N,0,+,\le)$~\cite{GinsburgSpanier1966}. Moreover, the transformations between such a first-order formula and an equivalent  semilinear  representation
are effective. In particular, the semilinear sets are effectively closed under Boolean operations.

\section{Knapsack for  groups}

Let $G$ be a finitely generated group with the finite symmetric generating set $\Sigma$.
Moreover, let $V$ be a set of formal variables that take values
from $\N$. For a subset $U\subseteq V$, we use $\N^U$ to denote the set of 
maps  $\nu \colon U \to \N$, which we call \emph{valuations}.
An \emph{exponent expression} over $G$ is a formal expression of the form $E = v_0 u_1^{x_1} v_1 u_2^{x_2} v_2  \cdots u_k^{x_k} v_k$
with $k \geq 0$ and words $u_i, v_i \in \Sigma^*$. Here, the variables do not have to be pairwise distinct. If every variable in an exponent expression occurs at most once, it is called a \emph{knapsack expression}. 
Let $V_E = \{ x_1, \ldots, x_k \}$ be the set of variables that occur in $E$.
 For a valuation $\nu\in\N^U$ such that $V_E \subseteq U$ (in which case we also say
 that $\nu$ is a valuation for $E$), we define 
 $\nu(E) = v_0 u_1^{\nu(x_1)} v_1 u_2^{\nu(x_2)} v_2  \cdots u_k^{\nu(x_k)} v_k \in \Sigma^*$.
We say that $\nu$ is a \emph{solution} of the equation $E=1$ if $\nu(E)$ evaluates to the identity element $1$ of $G$.
With $\Sol(E)$ we denote the set of all solutions $\nu \in \N^{V_E}$ of $E$. We can view $\Sol(E)$ as a subset
of $\N^k$.  The \emph{length} of $E$ is defined as $|E| = |v_0| + \sum_{i=1}^k |u_i|+|v_i|$, whereas $k$ is its \emph{depth}.
If the length of a knapsack expression is not needed, we will write an exponent expression over $G$ also
as $E = h_0 g_1^{x_1} h_1 g_2^{x_2} h_2  \cdots g_k^{x_k} h_k$ where $g_i, h_i \in G$. 
We define {\em solvability of exponent equations over $G$}, $\ExpEq(G)$ for short, 
as the following decision problem:
\begin{description}
\item[Input] A finite list of exponent expressions $E_1,\ldots,E_n$ over $G$.
\item[Question] Is $\bigcap_{i=1}^n \Sol(E_i)$ non-empty?
\end{description}
The knapsack problem for $G$, $\Knapsack(G)$ for short, is the following 
decision problem:
\begin{description}
\item[Input] A single knapsack expression $E$ over $G$.
\item[Question] Is $\Sol(E)$ non-empty?
\end{description}
We also consider the uniform knapsack problem for powers 
\[
G^m = \underbrace{G \times \cdots \times G}_{\text{$m$ many}} .
\]
We denote this problem with $\Knapsack(G^\ast)$. Formally, it is defined as follows:
\begin{description}
\item[Input] A number $m \geq 0$ (represented in unary notation) and a knapsack expression $E$ 
over the group $G^m$.
\item[Question] Is $\Sol(E)$ non-empty?
\end{description}
It turns out that the problems $\Knapsack(G^\ast)$ and $\ExpEq(G)$ are interreducible:
\begin{prop}\label{kppowers-vs-expeq}
  For every finitely generated group $G$, $\Knapsack(G^\ast)$ is decidable if and only if
  $\ExpEq(G)$ is decidable.
\end{prop}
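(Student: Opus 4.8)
The plan is to prove the two implications separately; the implication ``$\ExpEq(G)$ decidable $\Rightarrow$ $\Knapsack(G^\ast)$ decidable'' is the routine direction. Equip $G^m$ with the generating set obtained by placing a generator of $G$ in one coordinate and the identity in the others. A knapsack expression $E$ over $G^m$ then projects, coordinate by coordinate, to exponent expressions $E^{(1)},\dots,E^{(m)}$ over $G$ with the same variable set, and since every coordinate projection $G^m\to G$ is a homomorphism that commutes with evaluation, $\Sol(E)=\bigcap_{c=1}^m \Sol(E^{(c)})$. Hence $(m,E)\mapsto(E^{(1)},\dots,E^{(m)})$ is a computable reduction from $\Knapsack(G^\ast)$ to $\ExpEq(G)$.

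For the converse I would encode a whole list $E_1,\dots,E_n$ of exponent expressions over $G$, say with $\bigcup_i V_{E_i}=\{x_1,\dots,x_k\}$, into a single knapsack expression over a power $G^m$. The point that makes this possible is that one variable slot of a knapsack expression over $G^m$ carries an entire tuple and therefore contributes to several coordinates at once; thus the $m$ coordinate expressions of a knapsack expression over $G^m$ may share variables freely, the only constraints being that within one coordinate each variable occurs at most once and that all coordinates list their variables in a single common order. The construction is then: (i) in each $E_i$ replace every occurrence of a variable by a fresh one, turning $E_i$ into a knapsack expression $\widehat{E_i}$ over $G$, and order all fresh variables so that those of $E_i$ come before those of $E_{i+1}$ and, within $E_i$, appear left to right; (ii) for every original variable $x_j$, every two fresh variables $y,y'$ standing for consecutive occurrences of $x_j$ in this global order, and every element $a$ appearing in the list, add a coordinate carrying the knapsack expression $a^{y}a^{-y'}$. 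There are polynomially many coordinate expressions; I assemble them into one knapsack expression $\mathbf{E}$ over $G^m$, padding each coordinate with trivial powers of its unused variables so that all coordinates respect the global order, and hand $(m,\mathbf{E})$ to the decision procedure for $\Knapsack(G^\ast)$.

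Correctness is the statement that $\Sol(\mathbf{E})\neq\emptyset$ iff some valuation $\nu$ solves all $E_i$ simultaneously. From a solution of $\mathbf{E}$ the gadget coordinates force $a^{\nu(y)}=a^{\nu(y')}$ for all fresh copies $y,y'$ of the same original variable and all elements $a$ in the list; in particular, for each occurrence $o$ of $x_j$ with base $p$ one gets $p^{\nu(z_o)}=p^{\nu(z_{o_0})}$, where $z_{o_0}$ is the first fresh copy of $x_j$, so setting $\nu'(x_j):=\nu(z_{o_0})$ makes $\widehat{E_i}$ evaluate exactly as $E_i$ under $\nu'$, and every $E_i$ becomes $1$; the converse lift, giving every fresh copy of $x_j$ the value $\nu(x_j)$, is immediate. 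I expect the only delicate point to be the design of these gadgets: a knapsack expression over $G^m$ must use each variable at most once, so the occurrences of a variable inside an $E_i$ genuinely have to be renamed apart and one cannot afford a synchronisation that reuses a variable, while $a^{y}a^{-y'}=1$ forces merely $a^{\nu(y)}=a^{\nu(y')}$ (i.e.\ $\nu(y)\equiv\nu(y')\bmod\ord(a)$) and not $\nu(y)=\nu(y')$. The resolution is to install such a gadget for \emph{every} element occurring in the input; this suffices because reconstructing a value for $x_j$ only requires the group elements $a^{\nu(x_j)}$ for the bases $a$ attached to its occurrences, and it requires no computation in $G$ at all, so the reduction is uniform. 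The remaining details (the generating set of $G^m$, the padding, the bookkeeping of slots) are routine.
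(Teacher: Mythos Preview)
Your proof is correct and takes a genuinely different route from the paper in the nontrivial direction $\Knapsack(G^\ast)\Rightarrow\ExpEq(G)$. The paper first observes that decidability of $\Knapsack(G^\ast)$ gives a decidable word problem, then splits into two cases: if $G$ is a torsion group it brute-forces over bounded exponents, and otherwise it fixes a single element $a\in G$ of infinite order and adds a gadget $a^{y}(a^{-1})^{y'}$ for each pair of identified variables, which forces $\nu(y)=\nu(y')$ exactly.

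Your construction avoids the case distinction entirely by not insisting on $\nu(y)=\nu(y')$: installing a gadget $a^{y}(a^{-1})^{y'}$ for \emph{every} base $a$ occurring in the input only yields $a^{\nu(y)}=a^{\nu(y')}$, but that is all that is needed to transport a solution of $\mathbf{E}$ back to the original system, since each occurrence of $x_j$ is tested only against its own base. The payoff is a purely syntactic, uniform many-one reduction that works for torsion groups and non-torsion groups alike and never appeals to the word problem of $G$; the price is quadratically many gadgets rather than linearly many. The paper's argument is slightly shorter to state in the non-torsion case (one gadget per pair, and the synchronisation conclusion $\nu(y)=\nu(y')$ is literal), at the cost of the extra case and the non-uniformity of hardcoding an infinite-order element.
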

\begin{proof}
  Clearly, every instance of $\Knapsack(G^\ast)$ can be translated to
  an instance of $\ExpEq(G)$ by projecting onto the $m$ factors of a power $G^m$. For the converse direction, assume that
  $\Knapsack(G^\ast)$ is decidable. Then in particular, $G$ has a
  decidable word problem. Let
  $E_j=h_{0,j}g_{1,j}^{\varx{1,j}}h_{1,j}\cdots
  g_{k,j}^{\varx{k,j}}h_{k,j}$ be an exponent expression over $G$ for
  every $j\in[1,m]$. By adding dummy powers of the form $1^x$  we may
  assume that the $E_j$ have the same depth $k$.
  We distinguish two cases.
  
  \medskip
  \noindent
  {\em Case 1.} $G$ is a torsion group.
  Since $G$ has a decidable word
    problem, we can compute $\ell\in\N$ so that $g_{i,j}^\ell=1$ for
    every $i\in[1,k]$ and $j\in[1,m]$. Then there is a solution to the
    exponent equation system if and only if there is a solution $\nu$
    with $0\le\nu(\varx{}) < \ell$ for every variable
    $\varx{}$. Hence, solvability is clearly decidable.

  \medskip
  \noindent
  {\em Case 2.}
   There is some $a\in G$ with $\ord(a)=\infty$. We first rename the variables
   in $E_1,\ldots,E_m$ such that every variable occurs at most once in the entire
   system of expressions. Let $E'_1, \ldots, E'_m$ be the resulting system of knapsack
   expressions and let $U$ be the set of variables that occur in $E'_1, \ldots, E'_m$. We can
   compute an  equivalence relation
    $\mathord{\sim}\subseteq U\times U$  such that the
    system $E_1=1,\ldots,E_m=1$ has a solution if and only if the system
    $E'_1=1,\ldots,E'_m=1$ has a solution $\nu$ with
    $\nu(\varx{})=\nu(\varx{}')$ for $\varx{}\sim\varx{}'$.  We can
    equip $U$ with a linear order $\le$ so that if $\varx{}$ occurs
    left of $\varx{}'$ in some $E'_j$, then $\varx{} < \varx{}'$.

    Now for each pair $(\varx{},\varx{}')\in U\times U$ with
    $\varx{}\sim \varx{}'$ and $\varx{}<\varx{}'$, we add the
    knapsack expression $a^{\varx{}}(a^{-1})^{\varx{}'}$. This yields knapsack expressions
    $E'_1,\ldots,E'_{m+\ell}$ for some $\ell\ge 0$ such that
    $E'_1=1,\ldots,E'_{m+\ell}=1$ is solvable if and only if
    $E_1=1,\ldots,E_m=1$ is solvable. Moreover, whenever $\varx{}$ occurs
    to the left of $\varx{}'$ in some expression, then
    $\varx{} < \varx{}'$.
    
    By padding the expressions with trivial powers, we turn
    $E'_1,\ldots,E'_{m+\ell}$ into expressions
    $E''_1,\ldots,E''_{m+\ell}$ that all exhibit the same variables
    (in the same order). Now, it is easy to turn
    $E''_1,\ldots,E''_{m+\ell}$ into a single knapsack expression over
    $G^{m+\ell}$.
\end{proof}
Note that the equation $v_0 u_1^{x_1} v_1 u_2^{x_2} v_2  \cdots u_k^{x_k} v_k=1$ is equivalent 
to 
\[
(v_0 u_1 v_0^{-1})^{x_1} (v_0 v_1 u_2 v_1^{-1} v_0^{-1})^{x_2} \cdots (v_0 \cdots v_{k-1} u_k v_{k-1}^{-1} \cdots v_0^{-1})^{x_k}
(v_0 \cdots v_k) = 1 .
\]
Hence, it suffices to consider exponent expressions of the form
$u_1^{x_1} u_2^{x_2} \cdots u_k^{x_k} v$.

The group $G$ is called {\em knapsack-semilinear} if for every knapsack expression $E$ over $G$,
the set $\Sol(E)$ is a semilinear set of vectors and a semilinear representation can be effectively computed from $E$.
The following classes of groups only contain knapsack-semilinear groups:
\begin{itemize}
\item virtually special groups~\cite{DBLP:journals/corr/LohreyZ15}: these are finite extensions of subgroups of graph groups (aka right-angled
Artin groups). The class of virtually special groups is very rich. It contains all Coxeter groups, 
one-relator groups with torsion, fully residually free groups, and  fundamental groups of hyperbolic 3-manifolds.
\item hyperbolic groups: see \cref{appendix-hyperbolic}
\item co-context-free groups~\cite{KoenigLohreyZetzsche2015a}, i.e., groups where the set of all words over the generators that do not represent the identity
is a context-free language. Lehnert and Schweitzer \cite{LehSch07} have shown that the Higman-Thompson groups are
co-context-free.
\end{itemize}
Since the emptiness of the intersection of finitely many semilinear sets is decidable, we have:
\begin{lem}
If $G$ is knapsack-semilinear, then $\Knapsack(G^\ast)$ and $\ExpEq(G)$ are decidable.
\end{lem}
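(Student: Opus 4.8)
The plan is to reduce everything to deciding emptiness of an intersection of finitely many effectively semilinear sets, and to use \cref{kppowers-vs-expeq} so that only one of the two problems needs to be treated. I would handle $\Knapsack(G^\ast)$ directly: given $m$ in unary and a knapsack expression $E$ over $G^m$, project $E$ coordinatewise onto the $m$ factors to obtain expressions $E^{(1)},\dots,E^{(m)}$ over $G$ (re-expressing each projected generator over a fixed generating set of $G$). Since $E$ uses each variable at most once, each $E^{(j)}$ is again a knapsack expression over $G$, on the same variable set $V_E$, and by construction a valuation solves $E=1$ in $G^m$ if and only if it solves $E^{(j)}=1$ in $G$ for every $j$; that is, $\Sol(E) = \bigcap_{j=1}^m \Sol(E^{(j)})$ inside $\N^{V_E}$.

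Next I would invoke knapsack-semilinearity of $G$: each $\Sol(E^{(j)})$ is a semilinear subset of $\N^{V_E}$ for which a semilinear representation is computable. Using that the semilinear sets are effectively closed under intersection (equivalently, via the effective translations to and from Presburger arithmetic recalled in the Preliminaries), I can compute a semilinear representation of $\Sol(E)$; emptiness is then read off immediately from the representation, since a semilinear set is empty precisely when it is presented as a union of zero linear sets (any linear set contains its base vector). Hence $\Knapsack(G^\ast)$ is decidable, and $\ExpEq(G)$ is decidable as well by \cref{kppowers-vs-expeq}. (Alternatively one could treat $\ExpEq(G)$ directly, first renaming variable occurrences so that the given exponent expressions become knapsack expressions over a common variable set and then intersecting their cylindrified solution sets with the semilinear predicate enforcing the identified variables to be equal — the same bookkeeping as in the proof of \cref{kppowers-vs-expeq}.)

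There is essentially no obstacle here: the real content — effective semilinearity of solution sets of knapsack expressions — is part of the hypothesis, and the only thing to verify is the elementary observation that a coordinatewise projection of a knapsack expression over $G^m$ is still a knapsack expression over $G$, so that the hypothesis, which is stated only for expressions in which each variable occurs at most once, genuinely applies.
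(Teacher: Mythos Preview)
Your proposal is correct and follows the same approach as the paper: the paper's entire argument is the sentence preceding the lemma, namely that emptiness of the intersection of finitely many semilinear sets is decidable, and you have simply spelled out the details (coordinatewise projection for $\Knapsack(G^\ast)$, then \cref{kppowers-vs-expeq} for $\ExpEq(G)$, or equivalently the direct renaming argument you sketch). There is nothing to add.
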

An example of a group $G$, where $\Knapsack(G)$ is decidable but $\Knapsack(G^\ast)$ (and hence $\ExpEq(G)$)
are undecidable is the Heisenberg group $\DHB$, see \cite{KoenigLohreyZetzsche2015a}. It is the group of all matrices of the following form, where $a,b,c \in \Z$:
\[
\left( \!\!
\begin{array}{ccc}
1 & a & c \\
0 & 1 & b \\
0 & 0 & 1
\end{array}
\!\! \right)
\]
In particular, $\DHB$ is not knapsack-semilinear.

\section{Wreath products} \label{sec-wreath}

Let $G$ and $H$ be groups. Consider the direct sum $K = \bigoplus_{h
  \in H} G_h$, where $G_h$ is a copy of $G$. We view $K$ as the set $G^{(H)}$ of
all mappings $f\colon H\to G$ such that $\support(f)=\{h\in H \mid f(h)\ne
1\}$ is finite, together with pointwise multiplication as the group
operation.  The set $\support(f)\subseteq H$ is called the
\emph{support} of $f$. The group $H$ has a natural left action on
$G^{(H)}$ given by $h f(a) = f(h^{-1}a)$, where $f \in G^{(H)}$ and
$h, a \in H$.  The corresponding semidirect product $G^{(H)} \rtimes
H$ is the \emph{wreath product} $G \wr H$.  In other words:
\begin{itemize}
\item
Elements of $G \wr H$ are pairs $(f,h)$, where $h \in H$ and
$f \in G^{(H)}$.
\item
The multiplication in $G \wr H$ is defined as follows:
Let $(f_1,h_1), (f_2,h_2) \in G \wr H$. Then
$(f_1,h_1)(f_2,h_2) = (f, h_1h_2)$, where
$f(a) = f_1(a)f_2(h_1^{-1}a)$.
\end{itemize}
The following intuition might be helpful:
An element $(f,h) \in G\wr H$ can be thought of
as a finite multiset of elements of $G \setminus\{1_G\}$ that are sitting at certain
elements of $H$ (the mapping $f$) together with the distinguished
element $h \in H$, which can be thought of as a cursor
moving in $H$.
If we want to compute the product $(f_1,h_1) (f_2,h_2)$, we do this
as follows: First, we shift the finite collection of $G$-elements that
corresponds to the mapping $f_2$ by $h_1$: If the element $g \in G\setminus\{1_G\}$ is
sitting at $a \in H$ (i.e., $f_2(a)=g$), then we remove $g$ from $a$ and
put it to the new location $h_1a \in H$. This new collection
corresponds to the mapping $f'_2 \colon  a \mapsto f_2(h_1^{-1}a)$.
After this shift, we multiply the two collections of $G$-elements
pointwise: If in $a \in H$ the elements $g_1$ and $g_2$ are sitting
(i.e., $f_1(a)=g_1$ and $f'_2(a)=g_2$), then we put the product
$g_1g_2$ into the location $a$. Finally, the new distinguished
$H$-element (the new cursor position) becomes $h_1 h_2$.

By identifying $f\in G^{(H)}$ with $(f,1_H)\in G\wr H$ and $h\in H$
with $(1_{G^{(H)}}, h)$, we regard $G^{(H)}$ and $H$ as subgroups of $G\wr H$.
Hence, for $f\in G^{(H)}$ and $h\in H$, we have $f h=(f,
1_H)(1_{G^{(H)}}, h)=(f, h)$.
There are two natural projection morphism 
$\sigma_{G\wr H}  \colon G\wr H\to H$  and $\tau_{G\wr H}  \colon G\wr G^{(H)}$
with
\begin{eqnarray} 
\sigma_{G\wr H}(f,h) & = & h,  \label{eq-proj} \\
\tau_{G\wr H}(f,h) & = & f . \label{eq-tau}
\end{eqnarray}
If $G$ (resp. $H$) is generated by the set $\Sigma$ (resp. $\Gamma$) with 
$\Sigma \cap \Gamma = \emptyset$, then 
$G \wr H$ is generated by the set 
$\{ (f_a,1_H) \mid a \in \Sigma \} \cup \{ (f_{1_G}, b) \mid b \in \Gamma\}$, where for $g \in G$,
the mapping $f_g : H \to G$ is defined by $f_g(1_H) = g$ and $f_g(x) = 1_G$ for $x \in H \setminus\{1_H\}$.
This generating set can be identified with 
$\Sigma \uplus \Gamma$.
We will need the following embedding lemma:
\begin{lem}\label{embedding-powers}
  Let $G,H,K$ be finitely generated groups where $K$ has a
  decidable word problem. Then, given $n\in\N$ with $n\le |K|$, one can
  compute an embedding of $G^n\wr H$ into $G\wr (H\times K)$.
\end{lem}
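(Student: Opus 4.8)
The plan is to realize $G^n\wr H$ inside $G\wr(H\times K)$ by using $n$ distinct elements of $K$ as "tracks" on which to store the $n$ coordinates of each element of the base group $G^n$, while leaving the $H$-cursor essentially untouched.

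First I would use the decidable word problem of $K$ together with the hypothesis $n\le|K|$ to compute words $w_1,\dots,w_n$ over the generators of $K$ representing pairwise distinct elements $k_1,\dots,k_n\in K$: enumerate words over the generators of $K$, discarding any word equal to an earlier one (decidable since $K$ has decidable word problem), and stop once $n$ distinct elements have been produced; this terminates because $K$ has at least $n$ elements. Fix these $k_1,\dots,k_n$. For a finitely supported map $f\colon H\to G^n$, define $\hat f\colon H\times K\to G$ by $\hat f(h,k_i)=(f(h))_i$ for $i\in[1,n]$ (the $i$-th coordinate of $f(h)$) and $\hat f(h,k)=1_G$ whenever $k\notin\{k_1,\dots,k_n\}$; then $\support(\hat f)\subseteq\support(f)\times\{k_1,\dots,k_n\}$ is finite, so $\hat f\in G^{(H\times K)}$. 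Define $\varphi\colon G^n\wr H\to G\wr(H\times K)$ by $\varphi(f,h)=(\hat f,(h,1_K))$.

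Next I would check that $\varphi$ is an injective homomorphism. Injectivity is immediate: $\varphi(f,h)=1$ forces $h=1_H$ and $\hat f\equiv 1_G$, hence $(f(a))_i=1_G$ for all $a,i$, i.e.\ $f\equiv 1_{G^n}$. For the homomorphism property, let $(f_1,h_1),(f_2,h_2)\in G^n\wr H$ and write $(f_1,h_1)(f_2,h_2)=(f,h_1h_2)$ with $f(a)=f_1(a)f_2(h_1^{-1}a)$. On the other side, $(\hat f_1,(h_1,1_K))(\hat f_2,(h_2,1_K))=(F,(h_1h_2,1_K))$ where $F(a,k)=\hat f_1(a,k)\,\hat f_2\big((h_1,1_K)^{-1}(a,k)\big)=\hat f_1(a,k)\,\hat f_2(h_1^{-1}a,k)$, using that the $H$-shift by $(h_1,1_K)$ fixes the $K$-coordinate. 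Evaluating coordinate-wise: for $k=k_i$ one gets $F(a,k_i)=(f_1(a))_i\,(f_2(h_1^{-1}a))_i=(f(a))_i=\hat f(a,k_i)$, and for $k\notin\{k_1,\dots,k_n\}$ both sides are $1_G$; hence $F=\hat f$ and $\varphi$ is a homomorphism.

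Finally I would make the embedding effective by describing $\varphi$ on generators. Take as generators of $G^n\wr H$ the elements $b\in\Gamma$ (generators of $H$) together with, for each generator $a\in\Sigma$ of $G$ and each $i\in[1,n]$, the base element placing $a$ in coordinate $i$ at position $1_H$; as generators of $G\wr(H\times K)$ take $\Sigma\uplus\Gamma\uplus\Delta$, where $\Delta$ generates $K$. Then $\varphi$ sends $b\mapsto b$, and a short computation with the wreath-product multiplication shows that the $i$-th copy of $a$ is mapped to the conjugate $w_i\,a\,w_i^{-1}$ (read inside $G\wr(H\times K)$): conjugating the base generator $a$ by the element $(1_H,k_i)\in H\times K$ moves the letter $a$ from position $(1_H,1_K)$ to $(1_H,k_i)$, which is exactly the image of the $i$-th copy under $\varphi$. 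This yields an explicitly computable monomorphism. There is no real obstacle; the one point requiring care is the first step, which is precisely where the decidable word problem of $K$ and the bound $n\le|K|$ enter, while the verification that $\varphi$ respects the two semidirect-product structures, though the heart of the argument, is entirely routine.
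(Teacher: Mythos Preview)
Your proposal is correct and follows essentially the same approach as the paper: pick $n$ distinct elements $k_1,\dots,k_n\in K$ using the decidable word problem, store the $i$-th coordinate of $G^n$ on the track $H\times\{k_i\}$, and realize the embedding on generators via $b\mapsto b$ for $b\in\Gamma$ and the $i$-th copy of $a\in\Sigma$ via $w_i a w_i^{-1}$. The only minor stylistic difference is that you verify the homomorphism property by a direct computation on pairs $(f,h)$, whereas the paper first defines the map on the base $(G^n)^{(H)}\to G^{(H\times K)}$ and then checks it is $H$-equivariant to extend it to the semidirect product; both arguments amount to the same calculation.
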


\begin{proof}
  Let $\Sigma$, $\Gamma$, and $\Theta$ be finite
  generating sets of $G$, $H$, and $K$, respectively.  Suppose
  $n\in\N$ is given.  Since $K$ has a decidable word problem and
  $|K| \geq n$, we can compute words
  $w_1,\ldots,w_n\in \Theta^*$ that represent pairwise
  distinct elements $k_1,\ldots,k_n$ of $K$.

  Let $\pi_i\colon G^n\to G$ be the projection on the $i$-th
  coordinate. Since the statement of the \lcnamecref{embedding-powers} does
  not depend on the chosen generating sets of $G^n\wr H$ and $G\wr
  (H\times K)$, we may choose one. The group $G^n$ is generated by the tuples
  $s_i := (1,\ldots,1,s,1,\ldots,1)\in G^n$, for $s\in\Sigma$ and
  $i\in[1,n]$, where $s$ is at the $i$-th coordinate. 
  Hence,  $\Delta=\{ s_i \mid s \in \Sigma, i \in [1,n] \}  \uplus \Gamma$ is a 
  finite  generating set of $G^n\wr H$.
    
  The embedding $\iota\colon \Delta^* \to (\Sigma\cup
  \Gamma\cup \Theta)^*$ is defined by
  $\iota(s_i)=w_isw_i^{-1}$ for $s\in\Sigma$, $i \in [1,n]$ and
  $\iota(t)=t$ for $t\in\Gamma$. It remains to be shown that $\iota$
  induces an embedding of $G^n\wr H$ into $G\wr(H\times K)$.

  Consider the injective morphism $\varphi\colon (G^n)^{(H)}\to G^{(H\times K)}$
  where for $\zeta\in (G^n)^{(H)}$, we have
  \[ 
  [\varphi(\zeta)](h,k)=
  \begin{cases}
  \pi_i(\zeta(h)) & \text{ if } k = k_i \\
  1 & \text{ if } k\notin\{k_1,\ldots,k_n\}
  \end{cases}
  \]
  We claim that $\varphi$ extends to an injective morphism $\hat{\varphi}\colon
(G^n)^{(H)}\rtimes H\to G^{(H\times K)}\rtimes H$ where $H$ acts on $G^{(H\times K)}$
by $(h \zeta) (a,k) = \zeta(h^{-1}a,k)$ for $h,a \in H$, $k \in K$. To show this, it suffices 
to establish $\varphi(h\zeta)=h\varphi(\zeta)$ for all $\zeta\in (G^n)^{(H)}$, $h \in H$, i.e.,
the action of $H$ commutes with the morphism $\varphi$. To see this, note that 
\[ [\varphi(h \zeta)](a,k_i)=\pi_i((h\zeta)(a))=\pi_i(\zeta(h^{-1}a))=[\varphi(\zeta)](h^{-1}a, k_i)  =  [h\varphi(\zeta)](a, k_i) \]
and  if $k\notin\{k_1,\ldots,k_n\}$, we have
\[ [\varphi(h\zeta)](a,k)=1=[\varphi(\zeta)](h^{-1}a, k) =  [h\varphi(\zeta)](a, k). \]
Since the above action of $H$ on $G^{(H\times K)}$ is the restriction of the action 
of $H \times K$ on $G^{(H\times K)}$, we have $G^{(H\times K)}\rtimes H \leq G^{(H\times K)}\rtimes (H \times K) 
= G \wr (H\times K)$. Thus $\hat{\varphi}$ can be viewed as an embedding $\hat{\varphi} : G^n\wr H \to G\wr(H\times K)$.

We complete the
proof by showing that $\iota$ represents $\hat{\varphi}$, i.e.
$\hat{\varphi}(\overline{w})=\overline{\iota(w)}$ for every $w\in \Delta^*$,
where $\overline{w}$ denotes the element of $(G^n)^{(H)}\rtimes H$ represented
by the word $w$ and similarly for $\iota(w)$. It
suffices to prove this in the case $w\in\Delta \subseteq (G^n)^{(H)}\rtimes H$. If
$w=s_i$ with $s \in\Sigma, i \in [1,n]$, we observe that
$\hat{\varphi}(s_i)=k_i s k_i^{-1}=\overline{\iota(s_i)}$. Moreover, for
$t\in\Gamma \subseteq H$ we have
$\hat{\varphi}(t)=t=\iota(t)$.
\end{proof}

\section{Main results}

In this \lcnamecref{undecidability}, we state the main results of the paper.
We begin with a general necessary condition for knapsack to
be decidable for a wreath product. Note that if $H$ is finite, then
$G\wr H$ is a finite extension of $G^{|H|}$~\cite[Proposition~1]{LohreySteinbergZetzsche2015a}, meaning that
$\Knapsack(G\wr H)$ is decidable if and only if $\Knapsack(G^{|H|})$
is decidable~\cite[Theorem 11]{LohreyZetzsche2016a}\footnote{Strictly
  speaking, only preservation of $\NP$-membership was shown
  there. However, the proof also yields preservation of
  decidability.}. Therefore, we are only interested in the case that
$H$ is infinite.

\begin{prop}\label{necessary-conditions}
  Suppose $H$ is infinite.  If
  $\Knapsack(G\wr H)$ is decidable, then $\Knapsack(H)$ and
  $\Knapsack(G^\ast)$ are decidable.
\end{prop}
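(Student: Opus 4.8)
The plan is to show the two reductions separately, in each case reducing an instance of the target problem to an instance of $\Knapsack(G \wr H)$ by embedding the relevant subgroup and choosing witnesses that force the cursor/shift behaviour we need.

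For the first reduction, $\Knapsack(H) \leq \Knapsack(G \wr H)$, I would use that $H$ embeds in $G \wr H$ via $h \mapsto (1_{G^{(H)}}, h)$, together with the projection morphism $\sigma_{G \wr H} : G \wr H \to H$ from \eqref{eq-proj}. Given a knapsack expression $E = h_0 g_1^{x_1} h_1 \cdots g_k^{x_k} h_k$ over $H$, I would build the knapsack expression $E'$ over $G \wr H$ obtained by replacing each $h_i \in H$ by its image $(1_{G^{(H)}}, h_i)$ and each $g_i$ by $(1_{G^{(H)}}, g_i)$. Then $\nu(E')$ lies in the subgroup $\{1_{G^{(H)}}\} \times H$, and $\nu(E') = 1_{G \wr H}$ iff $\sigma_{G \wr H}(\nu(E')) = \nu(E) = 1_H$, so $\Sol(E') = \Sol(E)$. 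This direction is essentially bookkeeping.

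For the second reduction, $\Knapsack(G^\ast) \leq \Knapsack(G \wr H)$, I would exploit that $H$ is infinite and finitely generated, hence contains elements of unbounded "distance" from $1_H$ in any generating set: concretely, for any $m$ I can pick words over the generators of $H$ representing $m$ distinct elements $h_1, \ldots, h_m \in H$ (decidability of $\Knapsack(G \wr H)$ forces a decidable word problem on $H$, so these can be found effectively). The copies $G_{h_1}, \ldots, G_{h_m}$ sit inside $G^{(H)} \leq G \wr H$ as an internal direct product isomorphic to $G^m$; an element $g \in G$ placed at coordinate $h_i$ is written $(1_{G^{(H)}}, h_i)\,(f_g, 1_H)\,(1_{G^{(H)}}, h_i^{-1})$. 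So, given a knapsack expression $E$ over $G^m$ with $g_i = (g_i^{(1)}, \ldots, g_i^{(m)})$, I would translate each power $g_i^{x_i}$ into a product over $j \in [1,m]$ of conjugated powers placing $g_i^{(j)}$ at $h_j$; but this reuses $x_i$ across several factors, which is not allowed in a knapsack expression. The main obstacle is exactly this: $\Knapsack$ takes a \emph{single} knapsack expression, with each variable occurring once, whereas the natural encoding of a $G^m$-knapsack instance needs each exponent to act simultaneously on $m$ coordinates.

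To overcome this I would instead route through $\ExpEq$ and Proposition \ref{kppowers-vs-expeq}: it suffices to show that if $\Knapsack(G \wr H)$ is decidable then $\ExpEq(G)$ is decidable, and then invoke Proposition \ref{kppowers-vs-expeq} to conclude decidability of $\Knapsack(G^\ast)$. Given exponent expressions $E_1, \ldots, E_n$ over $G$, I pad them to a common depth $k$ and lay out the positions of all variables along $H$: using $n$ distinct elements $h_1, \ldots, h_n \in H$ (one per equation) I conjugate the $j$-th expression to live at coordinate $h_j$, and within coordinate $h_j$ I need the single variable $x$ appearing at several spots of $E_j$ and of other $E_{j'}$ to take a common value. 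Since $G^{(H)} \leq G \wr H$ is abelian in its $H$-direction but the powers $g_i^{x_i}$ are ordinary powers in $G$, forcing shared values is the one thing a knapsack expression cannot do directly. The clean fix is to note that we have already established, via the argument in the proof of Proposition \ref{kppowers-vs-expeq} (Case~2, using an element of infinite order, or Case~1 if $G$ is torsion), that an $\ExpEq(G)$ instance can be converted to a single knapsack expression over a power $G^{n+\ell}$; composing this with an embedding $G^{n+\ell} \wr \{1\} \hookrightarrow G \wr H$ — which exists because $H$ infinite lets us pick $n+\ell$ distinct cursor positions — yields a single knapsack expression over $G \wr H$ whose solvability is equivalent to solvability of the original $\ExpEq(G)$ system. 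I expect verifying that these conjugates multiply out correctly (the cursor returns to $1_H$ and the supports land exactly where intended, with no interference between the blocks) to be the one place that needs care, but it is a direct computation with the wreath-product multiplication rule.
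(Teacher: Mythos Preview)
Your argument is ultimately correct, but the detour through $\ExpEq(G)$ and Proposition~\ref{kppowers-vs-expeq} is unnecessary, and it stems from a misperception. The ``main obstacle'' you identify---that encoding $g_i^{x_i}$ with $g_i=(g_i^{(1)},\ldots,g_i^{(m)})\in G^m$ as $\prod_{j=1}^m (h_j g_i^{(j)} h_j^{-1})^{x_i}$ would duplicate $x_i$---is not real: the conjugates $h_j g_i^{(j)} h_j^{-1}$ have pairwise disjoint support in $G^{(H)}$ and therefore commute, so you can instead write a \emph{single} word $w_i$ over the generators of $G\wr H$ representing $\iota(g_i)=\prod_{j=1}^m h_j g_i^{(j)} h_j^{-1}$ and form $w_i^{x_i}$, using $x_i$ exactly once. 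More generally, any computable embedding $\iota\colon A\hookrightarrow B$ transfers a knapsack expression $g_1^{x_1}\cdots g_k^{x_k}h$ over $A$ directly to the knapsack expression $\iota(g_1)^{x_1}\cdots\iota(g_k)^{x_k}\iota(h)$ over $B$ with identical solution set; no variable duplication can arise, because $\iota(g_i)$ is one element of $B$, not several.

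This is exactly what the paper does: it invokes Lemma~\ref{embedding-powers} (applied with the lemma's $H$ trivial and its $K$ equal to the infinite group $H$ of the proposition, whose word problem is decidable since $\Knapsack(H)$ is) to get a computable embedding $G^m\hookrightarrow G\wr H$ for each $m$, and then transfers knapsack instances along it. Your own final step---embedding $G^{n+\ell}$ into $G\wr H$ and carrying over a single knapsack expression---is precisely this direct move; you just reached it after an unneeded excursion through $\ExpEq(G)$. Dropping the detour, your proof and the paper's coincide.
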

\begin{proof}
  As a subgroup of $G\wr H$, $H$ inherits decidability of the knapsack
  problem. According to \cref{embedding-powers}, given $m\in\N$, we
  can compute an embedding of $G^m$ into $G\wr H$ and thus solve
  knapsack instances over $G^m$ uniformly in $m$.
\end{proof}
\cref{necessary-conditions} shows that $\Knapsack(\DHB\wr\Z)$ is undecidable: It was shown in
\cite{KoenigLohreyZetzsche2015a} that $\Knapsack(\DHB)$ is decidable,
whereas for some $m>1$, the problem $\Knapsack(\DHB^m)$ is
undecidable.

\cref{necessary-conditions}  raises the question whether decidability  of
$\Knapsack(H)$ and $\Knapsack(G^\ast)$ implies decidability of $\Knapsack(G\wr H)$.
The answer turns out to be negative. Let us first recall the following result from 
\cite{KoenigLohreyZetzsche2015a}: 

\begin{thm}[\cite{KoenigLohreyZetzsche2015a}]
For every $\ell\in\N$, $\Knapsack(\DHB\times\Z^\ell)$ is decidable.
\end{thm}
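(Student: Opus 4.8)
The plan is to prove that $\Knapsack(\DHB\times\Z^\ell)$ is decidable for every $\ell$ by reduction to the already-established decidability of $\Knapsack(\DHB)$ together with semilinearity of solution sets over $\Z^\ell$. First I would take a knapsack expression $E = g_1^{x_1}g_2^{x_2}\cdots g_k^{x_k}g$ over $\DHB\times\Z^\ell$ and project it through the two natural morphisms $\pi_1\colon \DHB\times\Z^\ell\to\DHB$ and $\pi_2\colon\DHB\times\Z^\ell\to\Z^\ell$. A valuation $\nu$ is a solution of $E$ if and only if $\nu$ is simultaneously a solution of the projected expression $\pi_1(E)$ over $\DHB$ and of $\pi_2(E)$ over $\Z^\ell$. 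Since $\Z^\ell$ is knapsack-semilinear (indeed finitely generated abelian), the solution set $S_2 = \Sol(\pi_2(E))\subseteq\N^k$ is effectively semilinear; write it as a finite union of linear sets $L_j = \{b_j + \sum_i \lambda_i p_{j,i} : \lambda_i\in\N\}$.

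The key point is then to decide, for each linear piece $L_j$, whether it intersects $\Sol(\pi_1(E))$. Substituting the linear parametrization of $L_j$ into $\pi_1(E)$ turns the $k$ original variables $x_1,\dots,x_k$ into affine functions of new variables $\lambda_1,\dots,\lambda_m$; but an expression of the form $g_1^{a_1(\bar\lambda)}\cdots g_k^{a_k(\bar\lambda)}$ with $a_i$ affine in $\bar\lambda$ can be re-expanded, using $g^{c+\sum d_r\lambda_r} = g^c\prod_r (g^{d_r})^{\lambda_r}$ and the commutation relations, into a genuine exponent expression over $\DHB$ in the variables $\bar\lambda$ — though now a variable may occur several times. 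Here I would invoke \cref{kppowers-vs-expeq} and the fact (cited in the excerpt) that $\ExpEq(\DHB)$, equivalently $\Knapsack(\DHB^\ast)$, while undecidable in general, is \emph{not} what we need: what we actually need is that $\Knapsack(\DHB)$ handles a \emph{single} expression, so the reduction must avoid multiplying variables. The cleaner route is to keep the variables $x_1,\dots,x_k$ and instead encode the constraint $\bar x\in L_j$ directly: the relation $x_i = b_{j,i} + \sum_r \lambda_r p_{j,r,i}$ is an affine relation, and an affine relation among the $x_i$ with integer coefficients can be expressed as the solvability of an auxiliary knapsack-type equation over $\Z$ (hence over $\DHB$ via its central $\Z$-subgroup, or over an extra $\Z$-factor), which can then be merged with $\pi_1(E)$. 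Since decidability of $\Knapsack$ over a group is not a priori closed under taking such conjunctions for $\DHB$, I would instead appeal to the stronger statement actually proved in \cite{KoenigLohreyZetzsche2015a}, namely that for $\DHB$ one can, given a knapsack expression together with finitely many additional linear constraints on the exponents, decide solvability — which is exactly the form in which their decidability proof for $\DHB$ is carried out.

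Concretely the steps are: (1) split into $\pi_1(E)$ over $\DHB$ and $\pi_2(E)$ over $\Z^\ell$; (2) compute a semilinear representation $\bigcup_j L_j$ of $\Sol(\pi_2(E))$ using effective semilinearity of the abelian group $\Z^\ell$; (3) for each $j$, decide whether there is a solution of $\pi_1(E)$ whose exponent vector lies in $L_j$, by feeding the linear constraints defining $L_j$ into the $\DHB$-knapsack-with-linear-constraints procedure of \cite{KoenigLohreyZetzsche2015a}; (4) answer ``yes'' iff some $j$ yields a positive answer. The main obstacle — and the reason this is a theorem rather than a one-line corollary — is step (3): one cannot simply intersect two solution sets, because $\Sol(\pi_1(E))\subseteq\N^k$ need not be semilinear (that is precisely why $\DHB$ is not knapsack-semilinear and why $\Knapsack(\DHB^\ast)$ is undecidable). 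So the argument must rely on the specific structure of the decidability proof for $\DHB$, showing it is robust enough to accommodate the extra semilinear constraints coming from the $\Z^\ell$ component; making that robustness precise, and checking it does not degenerate into the undecidable $\ExpEq(\DHB)$ (which would happen if the constraints were allowed to force equality of two of the original exponent variables in an uncontrolled way), is the delicate part.
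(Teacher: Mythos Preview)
This theorem is not proved in the present paper; it is quoted from \cite{KoenigLohreyZetzsche2015a} without argument, so there is no in-paper proof to compare against. The proof in the cited work is considerably more direct than your proposal: one simply expands $g_1^{x_1}\cdots g_k^{x_k}g=1$ coordinatewise in $\DHB\times\Z^\ell$. The $(1,2)$- and $(2,3)$-entries of the Heisenberg factor give two linear equations in $x_1,\dots,x_k$; the $(1,3)$-entry gives a single quadratic equation; and the $\Z^\ell$ factor contributes $\ell$ further linear equations. Solvability over $\N$ of a system consisting of one quadratic equation together with finitely many linear ones is classically decidable, and that is the entire argument. No detour through semilinear representations is needed.

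Your route---project, compute a semilinear representation of $\Sol(\pi_2(E))$, then test each linear piece against $\pi_1(E)$---would also succeed, but your write-up is tangled at exactly the point that matters, and the worry in your final sentence is misplaced. Imposing linear constraints (in particular equalities $x_i=x_j$) on the exponents of a \emph{single} knapsack expression over $\DHB$ still yields \emph{one} quadratic equation plus linear equations after substitution, hence stays decidable. What makes $\ExpEq(\DHB)$ undecidable is not repeated variables inside one expression but the presence of \emph{several} independent expressions, each contributing its own quadratic equation, so that one lands in general systems of quadratic Diophantine equations. Once you see this, your step~(3) is immediate: substitute the affine parametrization of $L_j$ into $\pi_1(E)$, obtain one quadratic plus linear constraints, and decide. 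The ``delicate part'' you flag does not actually exist; you should state cleanly that a single exponent equation over $\DHB$, even with repeated variables or additional linear side-conditions, reduces to one quadratic Diophantine equation, and that this is where the distinction from $\ExpEq(\DHB)$ lies.
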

Hence, by the following result, which is shown in \cref{undecidability},  decidability  of
$\Knapsack(H)$ and $\Knapsack(G^\ast)$ does in general not imply decidability of $\Knapsack(G\wr H)$:

\begin{thm}\label{undecidability:general}
  There is an $\ell \in\N$ such that for every group $G \neq 1$, $\Knapsack(G\wr (\DHB\times\Z^\ell))$ is undecidable.
\end{thm}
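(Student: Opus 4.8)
The plan is to give a computable reduction from an undecidable problem to $\Knapsack(G\wr(\DHB\times\Z^\ell))$ for a suitable fixed $\ell$; the natural source is $\ExpEq(\DHB)$, which is undecidable by \cite{KoenigLohreyZetzsche2015a} (equivalently, by \cref{kppowers-vs-expeq}, $\Knapsack(\DHB^\ast)$; in particular $\Knapsack(\DHB^m)$ is undecidable for some fixed $m$). The reduction will use only a single non-identity element $a\in G$, so it applies uniformly to every $G\neq 1$. (One may also first apply \cref{embedding-powers}, since $G^n\wr(\DHB\times\Z^{\ell-1})$ embeds into $G\wr(\DHB\times\Z^\ell)$, to work with a power $G^n$ in place of $G$, which gives a little more room in the base group.)

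The core idea exploits that $(f,h)=1$ in $G\wr H$ holds iff $h=1_H$ \emph{and} $f$ is the trivial map, the latter being a conjunction ``$f(p)=1_G$ for every $p\in H$'' over the positions $p\in H$. Fix $a\in G\setminus\{1\}$. Evaluating a Heisenberg exponent expression $E$ in $G\wr H$ (its letters being generators of $\DHB\le H$, hence pure top-group elements) changes the $\DHB$-coordinate of the cursor from its current value $P$ to $P\cdot\nu(E)$ and leaves the base map untouched; inserting ``deposit $a^{-1}$'' just before and ``deposit $a$'' just after $E_j$ therefore leaves, in the current column, exactly the two values $a^{-1}$ at $\DHB$-position $P$ and $a$ at $\DHB$-position $P\nu(E_j)$, and these annihilate iff $\nu(E_j)=1$ in $\DHB$. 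Using a fresh $\Z$-coordinate of $\Z^\ell$ to step the cursor to pairwise distinct ``columns'' before running $E_1,\dots,E_n$ in turn makes these checkpoints live in disjoint columns, so that for the resulting expression $\hat E$ the condition $\hat E=1$ decomposes as the conjunction ``$\nu(E_j)=1$ for all $j$'' (the cursor returning to $1_H$ is then automatic, as the column steps are undone and each satisfied checkpoint forces the corresponding $\DHB$-detour to be trivial).

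The remaining difficulty --- and the main obstacle --- is that the $E_j$ share variables, whereas a knapsack expression forbids repetitions. As in the proof of \cref{kppowers-vs-expeq} one renames every occurrence to a fresh variable and then has to enforce the equality constraints $x\sim x'$ \emph{inside the single expression} $\hat E$. The mechanism available is that for a generator $\hat u=(f,h)$ of $G\wr H$ with $\ord(h)=\infty$, the power $\hat u^x$ unfolds $f$ into a ``ray'' $f,\ h{\cdot}f,\ \dots,\ h^{x-1}{\cdot}f$ of $x$ translated copies of $f$; if the two occurrences of a renamed variable are made to lay down a ray of $a$'s, respectively a ray of $a^{-1}$'s, that are collinear and based at the same point, these two rays annihilate precisely when the two exponents agree. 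Arranging --- with the help of the auxiliary $\Z$-factors of $\Z^\ell$ --- that every pair of rays which must be compared is indeed collinear and co-based, while keeping the Heisenberg computations, the checkpoints, and all the rays in mutually disjoint columns of the base group, is exactly the delicate bookkeeping, and it is this that pins down the value of $\ell$. Granting it, $\hat E=1$ is solvable iff some valuation satisfies every $\nu(E_j)=1$ and respects all $\sim$-constraints, i.e.\ iff $\bigcap_j\Sol(E_j)\neq\emptyset$; since $\ExpEq(\DHB)$ is undecidable, so is $\Knapsack(G\wr(\DHB\times\Z^\ell))$, uniformly in $G\neq 1$.
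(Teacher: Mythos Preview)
Your overall plan matches the paper's: reduce from $\Knapsack(\DHB^m)$ for the fixed $m$ of \cref{thm:KoLoZe}, use a nontrivial $a\in G$ together with the principle behind \cref{no-movement} as a position marker, and separate pieces into distinct $\Z$-columns. The checkpoint mechanism (deposit $a^{-1}$, run $E_j$, deposit $a$; cancellation iff $\nu(E_j)=1$) is correct.

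The gap is exactly the step you flag as ``delicate bookkeeping,'' and the mechanism you propose for it does not work. After renaming, a variable $x$ occurs once, inside a factor $g^{x}$ with $g\in\DHB$; the only way to make that factor also lay down a ray is to replace $g$ by some $\hat g=(f,h)$, and $h$ must retain the Heisenberg part $g$ (otherwise the $\DHB$-computation is lost), so the ray direction depends on $g$. Two occurrences of the same original variable sit as $g^{x}$ and $g'^{x'}$ with \emph{different} Heisenberg bases $g\neq g'$ (they are the projections of a fixed element of $\DHB^m$ onto different coordinates), so their rays are not collinear; the auxiliary $\Z$-factors can shift the base points but cannot reconcile the directions. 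The paper therefore does not use base-group rays for the equalities at all. Instead the $\Z^\ell$-coordinates act as \emph{counters}: each $g_{i,j}^{x_{i,j}}$ becomes $(g_{i,j},e_{(j-1)k+i})^{x_{i,j}}$, so the cursor's $\Z^\ell$-displacement records the exponents, and a block $C$ with fresh variables $z_i$ subtracts $z_i$ from all coordinates $(j-1)k+i$ at once. Forcing $x_{i,j}=z_i$ then amounts to imposing $m$ cursor-position equations in $H$; it is for \emph{this} that the passage to $G^m$ via \cref{embedding-powers} is used, since the $m$ independent markers $a_1,\ldots,a_m\in G^m$ together with \cref{no-movement} (applied coordinatewise) yield $m$ such equations, with auxiliary movement blocks $M_j$ absorbing the residual $\Z^\ell$-displacements. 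So your instinct to invoke $G^n$ is right, but what it buys is several independent markers for cursor-position constraints, not a way to align base-group rays.
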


We therefore need to strengthen the assumptions on $H$ in order to show decidability
of $\Knapsack(G\wr H)$. By adding the weak assumption of knapsack-semilinearity for $H$,
we obtain a partial converse to \cref{necessary-conditions}. In \cref{decidability} we prove:
\begin{thm} \label{thm-semilinear1}
  Let $H$ be knapsack-semilinear. Then $\Knapsack(G\wr H)$ is
  decidable if and only if $\Knapsack(G^\ast)$ is decidable.
\end{thm}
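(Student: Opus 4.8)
The forward direction is immediate: $G^m$ embeds into $G \wr H$ (for every $m$) by \cref{embedding-powers}, and since $H$ is infinite (the case of finite $H$ having been disposed of before \cref{necessary-conditions}), $\Knapsack(G^\ast)$ reduces to $\Knapsack(G \wr H)$ exactly as in the proof of \cref{necessary-conditions}. So the content is the converse: assuming $\Knapsack(G^\ast)$ is decidable (equivalently, by \cref{kppowers-vs-expeq}, $\ExpEq(G)$ is decidable) and $H$ is knapsack-semilinear, decide $\Knapsack(G \wr H)$.

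The plan is to analyze a single knapsack expression $E = g_1^{x_1} g_2^{x_2} \cdots g_k^{x_k} g$ over $G \wr H$, where each $g_i = (f_i, h_i)$ and $g = (f, h)$. Projecting via $\sigma_{G \wr H}$ to $H$, a solution $\nu$ must satisfy the knapsack equation $h_1^{x_1} \cdots h_k^{x_k} h = 1$ in $H$; since $H$ is knapsack-semilinear, the set $S$ of such $\nu \in \N^k$ is effectively semilinear, so we may work inside one linear piece $\{\nu_0 + \sum_j \lambda_j \nu_j : \lambda_j \in \N\}$ and reparametrize the $x_i$ by the new variables $\lambda_j$ (plus the constant offset). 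The key observation is that for a valuation satisfying the $H$-projection, the cursor of $G \wr H$ traces out a walk in $H$ whose intermediate positions are the group elements $p_0 = 1$, $p_i = h_1^{x_1} \cdots h_i^{x_i}$ (and within each power $g_i^{x_i}$ the cursor cycles through $p_{i-1}, p_{i-1}h_i, p_{i-1}h_i^2, \ldots$); the $G^{(H)}$-component of $\nu(E)$ is the pointwise product, over each position $q \in H$ visited, of the $G$-elements deposited there. Thus $\nu(E)$ evaluates to $1$ in $G \wr H$ iff the $H$-equation holds and, for every $q \in H$, the product of the contributions landing on $q$ is $1$ in $G$.

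The heart of the argument is to show this infinite family of $G$-conditions can be captured by finitely many exponent equations over $G$ (or over powers $G^m$), which is where knapsack-semilinearity of $H$ and decidability of $\ExpEq(G)$ combine. One determines, for each pair of "blocks" $i \le i'$, the set of offsets $(s, s')$ such that the $s$-th step inside the $i$-th power and the $s'$-th step inside the $i'$-th power hit the same element of $H$; this is governed by equations of the form $p_{i-1} h_i^{s} = p_{i'-1} h_{i'}^{s'}$ in $H$, whose solution sets in the reparametrized variables are again semilinear because $H$ is knapsack-semilinear. Using these semilinear "collision patterns" one partitions the parameter space into finitely many regions on each of which the combinatorial structure of which contributions collide is fixed, and on each region the requirement "every $H$-position receives a trivial product" becomes a conjunction of exponent equations over $G$ whose coefficients are the relevant $f_i$-values (finitely many elements of $G$, since each $f_i$ has finite support). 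Solvability of such a system over $G$ — allowing powers to appear, and bundling parallel positions into a power $G^m$ — is exactly $\ExpEq(G)$, which is decidable by hypothesis; intersecting with the semilinear region and with $S$ keeps everything decidable. Running over all finitely many regions and all semilinear pieces of $S$ decides $\Knapsack(G \wr H)$.

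The main obstacle is the bookkeeping in the previous paragraph: making precise how a single scalar variable $x_i$, after reparametrization, controls simultaneously a \emph{range} of $H$-positions $p_{i-1}, p_{i-1}h_i, \ldots$ into which copies of the fixed finite support of $f_i$ are repeatedly stamped, and showing that the resulting "who collides with whom" relation is definable by finitely many semilinear sets uniformly in the parameters. Once that structural lemma is in place, reducing to $\ExpEq(G)$ is routine. (One technical subtlety worth flagging: if some $h_i$ has finite order in $H$ the associated power deposits its support onto only finitely many, cyclically repeated, $H$-positions, so that case must be separated out; but $H$ having a decidable word problem — which follows from knapsack-semilinearity — lets us detect it, and it only makes the collision analysis easier.)
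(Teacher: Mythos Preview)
Your overall strategy matches the paper's: reduce the $G^{(H)}$-component of the equation to a finite disjunction of exponent-equation systems over $G$, indexed by combinatorial ``collision types'', and use knapsack-semilinearity of $H$ to delimit the parameter region realising each type. The forward direction, the $H$-projection step, and the torsion remark are all as in the paper (the last is \cref{torsion-free-instances}).

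Where the sketch stops short is the claim that the parameter space splits into \emph{finitely many} regions with fixed collision structure: you have not said what that structure is, nor why only finitely many types occur. Tracking collisions by step indices $(i,s)$, as your equation $p_{i-1}h_i^{s}=p_{i'-1}h_{i'}^{s'}$ does, yields an unbounded object, since the number of steps grows with $\nu(x_i)$. The paper's decisive move is to pass to \emph{addresses} $(i,h)$ with $h\in\supp(g_i)$, a fixed finite set $A$. After the torsion-free reduction, \cref{compute-powers-inf,compute-powers-torsion} show that the $G$-element deposited at any position $h\in H$ depends only on the subset $A_{\mu,h}\subseteq A$ of addresses landing there: each address with $\ord(\sigma(g_i))=\infty$ contributes a fixed $G$-constant, and each address with $\sigma(g_i)=1$ contributes a power $\tau(g_i)(h')^{x_i}$, so the product is an exponent expression $E_C$ over $G$ in the stationary variables alone. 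Hence the relevant datum at $h$ is a subset of $A$, and the global datum is a \emph{profile} $P\subseteq\Powerset{A}$---finitely many possibilities. For each $P$, the set of $\mu$ whose clusters all lie in $P$ is Presburger-definable using knapsack-semilinearity of $H$ (\cref{solutions-presburger}), and whether some $\kappa$ makes every $C\in P$ cancel is an $\ExpEq(G)$-instance (\cref{cancelling-profiles}). This address abstraction is exactly the structural lemma you flag as the obstacle; without it the finiteness claim is not substantiated.
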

In fact, in case $G$ is also knapsack-semilinear, our algorithm constructs a semilinear representation of the solution set. Therefore, we get:

\begin{thm}  \label{thm-semilinear2}
  The group $G\wr H$ is knapsack-semilinear if and only if both $G$
  and $H$ are knapsack-semilinear.
\end{thm}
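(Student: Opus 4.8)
The plan is to prove \cref{thm-semilinear2} by combining \cref{thm-semilinear1} with the observation that our algorithm for $\Knapsack(G \wr H)$ is constructive: when both factors are knapsack-semilinear, it does not merely decide emptiness of the solution set but actually outputs a semilinear representation of $\Sol(E)$ for every knapsack expression $E$ over $G \wr H$.

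First, for the ``only if'' direction, suppose $G \wr H$ is knapsack-semilinear. Both $G$ and $H$ embed as subgroups of $G \wr H$ (via the identifications $f \in G^{(H)} \mapsto (f,1_H)$ and $h \in H \mapsto (1_{G^{(H)}},h)$ described in \cref{sec-wreath}). Any knapsack expression over a subgroup is, after fixing words representing the relevant elements, a knapsack expression over the ambient group with the same solution set, so knapsack-semilinearity is inherited by finitely generated subgroups; hence $G$ and $H$ are knapsack-semilinear. (One should note that $G$ and $H$ being finitely generated is guaranteed, since we only speak of knapsack-semilinearity for finitely generated groups, and wreath products of finitely generated groups are finitely generated.)

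For the ``if'' direction, assume $G$ and $H$ are both knapsack-semilinear. In particular $H$ is knapsack-semilinear, and $G$ being knapsack-semilinear implies $\Knapsack(G^*)$ is decidable; more precisely, as remarked after the definition of knapsack-semilinearity, knapsack-semilinearity of $G$ yields effective semilinear representations of solution sets of knapsack expressions over each power $G^m$. This is exactly the extra strength needed: the proof of \cref{thm-semilinear1} reduces a knapsack instance over $G \wr H$ to instances of $\Knapsack(H)$, $\ExpEq(H)$, and $\Knapsack(G^*)$ (really $\ExpEq(G)$), combined via Presburger-definable constraints that encode where the cursor sits and how the $G$-components accumulate along the run. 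When each of these subproblems returns a semilinear representation rather than a yes/no answer, the whole construction — being a finite composition of intersections, projections, and Presburger-definable gluing, all of which preserve effective semilinearity (see the remarks on Presburger arithmetic in the Preliminaries) — produces a semilinear representation of $\Sol(E)$.

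Concretely, I would carry out the proof in two steps: (1) state and verify the ``only if'' direction via the subgroup argument above; (2) for the ``if'' direction, point to the algorithm of \cref{decidability} underlying \cref{thm-semilinear1} and observe that, under the additional hypothesis that $G$ is knapsack-semilinear, every oracle call in that algorithm can be replaced by the computation of a semilinear representation, and that the finitely many set-theoretic and Presburger operations combining these representations keep the result effectively semilinear. The main obstacle is really deferred to the proof of \cref{thm-semilinear1}: one must ensure that the reduction there is ``semilinearity-preserving,'' i.e.\ that the way solution sets over $G^*$ and over $H$ are intersected and projected to obtain $\Sol(E)$ for $G \wr H$ is expressible by a Presburger formula with these sets plugged in as atoms. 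Granting that (which is how the algorithm in \cref{decidability} is designed), the present statement follows with no further work beyond bookkeeping.
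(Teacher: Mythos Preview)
Your proposal is correct and matches the paper's approach. The paper proves the ``if'' direction exactly as you describe: the construction in \cref{decidability} yields $\Sol(E)=\bigcup_{P} K_P\oplus (L_P\cap T)$, where $T$ and each $L_P$ are effectively semilinear by knapsack-semilinearity of $H$ (\cref{solutions-presburger}), and each $K_P$ is effectively semilinear by knapsack-semilinearity of $G$ (\cref{cancelling-profiles}); the ``only if'' direction via subgroup inheritance is left implicit in the paper, so your explicit treatment of it is a small bonus.
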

Since every free abelian group is clearly knapsack-semilinear, it follows that the iterated wreath products
$G_{1,r} = \mathbb{Z}^r$ and $G_{d+1,r} = \mathbb{Z}^r \wr G_{d,r}$ are knapsack-semilinear.
By the well-known Magnus embedding, the free solvable group $S_{d,r}$  embeds into 
$G_{d,r}$. Hence, we get:
\begin{cor}
Every free solvable group is knapsack-semilinear. Hence, solvability of exponent equations is decidable
for free solvable groups.
\end{cor}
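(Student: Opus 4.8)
The plan is to deduce the corollary directly from \cref{thm-semilinear2} together with the Magnus embedding, so the proof is essentially an unwinding of definitions plus a citation. First I would fix notation: for a free solvable group $S_{d,r}$ of derived length $d$ and rank $r$, recall the iterated wreath products $G_{1,r} = \Z^r$ and $G_{d+1,r} = \Z^r \wr G_{d,r}$ already introduced before the statement. The Magnus embedding theorem (cited via \cite{Mag39} in the introduction) gives, for each $d$ and $r$, an embedding $S_{d,r} \hookrightarrow G_{d,r}$; concretely one argues by induction on $d$, using that $S_{d+1,r} = F_r/[N,N]$ where $N$ is the $d$-th term of the derived series of $F_r$ and $F_r/N \cong S_{d,r}$, so Magnus yields $S_{d+1,r} \hookrightarrow \Z^r \wr (F_r/N) = \Z^r \wr S_{d,r} \hookrightarrow \Z^r \wr G_{d,r} = G_{d+1,r}$, where the last embedding is the functoriality of $G \wr (-)$ applied to the inductive embedding $S_{d,r} \hookrightarrow G_{d,r}$.

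The second step is to observe that each $G_{d,r}$ is knapsack-semilinear. This is an induction on $d$: the base case $G_{1,r} = \Z^r$ is a finitely generated abelian group, hence knapsack-semilinear (solution sets of knapsack equations over $\Z^r$ are defined by linear Diophantine conditions, so they are Presburger-definable and thus effectively semilinear). For the induction step, $\Z^r$ is knapsack-semilinear and, by the inductive hypothesis, so is $G_{d,r}$; therefore by \cref{thm-semilinear2} the wreath product $G_{d+1,r} = \Z^r \wr G_{d,r}$ is knapsack-semilinear.

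The third step is to transfer knapsack-semilinearity from $G_{d,r}$ down to the subgroup $S_{d,r}$. Here I would use the standard fact that knapsack-semilinearity passes to finitely generated subgroups: if $H \leq G$ is finitely generated and $G$ is knapsack-semilinear, then any knapsack expression over $H$ can be rewritten (via a fixed monoid homomorphism sending generators of $H$ to words over the generators of $G$) into a knapsack expression over $G$ with the same solution set, which is therefore effectively semilinear. Applying this with $H = S_{d,r}$ and $G = G_{d,r}$ gives that $S_{d,r}$ is knapsack-semilinear. Finally, knapsack-semilinearity of a group $G$ implies decidability of $\ExpEq(G)$ (take semilinear representations of the solution sets of the finitely many exponent expressions in an instance and test emptiness of the intersection, which is decidable for semilinear sets), which yields the second sentence of the corollary.

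I do not expect a genuine obstacle here: the work has already been done in \cref{thm-semilinear2}, and everything else is bookkeeping. The only point requiring mild care is making the Magnus embedding precise enough to chain it through the wreath-product functor and to confirm that $S_{d,r}$ is finitely generated so that the subgroup-transfer of knapsack-semilinearity applies; both are routine but should be stated explicitly rather than left implicit.
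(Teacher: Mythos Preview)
Your proposal is correct and follows exactly the route the paper takes: induct on $d$ to get knapsack-semilinearity of $G_{d,r}$ via \cref{thm-semilinear2}, invoke the Magnus embedding $S_{d,r}\hookrightarrow G_{d,r}$, and pass knapsack-semilinearity to the finitely generated subgroup. The paper in fact presents this only as a one-paragraph remark preceding the corollary, so your version simply spells out the details (the chained Magnus embedding and the subgroup-transfer step) more explicitly than the paper does.
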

Finally, we consider the complexity of knapsack for wreath products. We prove  $\NP$-completeness
for an important special case:

\begin{thm} \label{thm-NP}
For every non-trivial finitely generated abelian group $G$,  $\Knapsack(G \wr \Z)$ is  $\NP$-complete.
\end{thm}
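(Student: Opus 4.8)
The plan is to treat the two directions separately. For $\NP$-hardness, note that for any non-trivial finitely generated abelian group $G$, either $G$ has an element of infinite order or $G$ is a non-trivial finite abelian group; in the first case $\Z$ embeds in $G$ and the classical $\NP$-hardness of subset sum over $\Z$ (hence over $G\wr\Z$, using that $G$ and $\Z$ are subgroups of $G\wr\Z$) transfers directly. In the second case, the ``lamplighter-style'' structure does the work: one encodes an instance of subset sum over $\Z$ by placing, for each input number $n_i$, a power of the cursor-shift generator of $\Z$ of exponent $n_i$ together with a lamp element, so that a target configuration is reachable exactly when a subset of the $n_i$ sums to the target; this is essentially the argument of \cite{MyNiUs14} for $\Z\wr\Z$, adapted so that it only needs one non-trivial lamp value, which any non-trivial $G$ supplies. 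So $\NP$-hardness holds uniformly, and the same construction gives $\NP$-hardness of subset sum for $G\wr\Z$, recovering and extending the result of \cite{MyNiUs14} to finite $G$.

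The substantial direction is membership in $\NP$. Here the strategy is to analyze a knapsack expression $E = g_1^{x_1}\cdots g_k^{x_k}h$ over $G\wr\Z$ via the two projections: $\sigma=\sigma_{G\wr\Z}\colon G\wr\Z\to\Z$ and the coordinate functions $\tau$. Applying $\sigma$ turns $E=1$ into a one-dimensional integer knapsack equation $\sum_i x_i \sigma(g_i) = -\sigma(h)$ in $\Z$, whose solution set is an effectively computable semilinear subset of $\N^k$ of polynomially bounded description; I would guess a small (polynomial-size) semilinear representation and, having fixed which linear piece we are in, parametrize the solutions. The point of fixing the $\Z$-behaviour is that then the cursor trajectory in $\Z$ during the evaluation of $\nu(E)$ is determined up to the few free parameters, so the question ``does the $G^{(\Z)}$-part evaluate to the right function?'' becomes: at each position $j\in\Z$ visited by the cursor, the sum of the $G$-contributions deposited there (a linear form in the exponents $x_i$, with coefficients the relevant $\tau$-values of the $g_i$) equals the prescribed value $h(j)$.

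The key obstacle, and the heart of the proof, is that a priori the cursor can visit a range of positions in $\Z$ that is exponential in the binary length of the input (since exponents may be exponentially large), so one cannot simply write down one equation per visited position. The resolution is a ``staircase'' / interval-decomposition argument: because $G$ is abelian, the value deposited at a position $j$ depends only on how many times, and with which net multiplicity, the cursor passes $j$ while some $g_i^{x_i}$ block is active; as $j$ ranges over a contiguous interval in which the same set of blocks is ``straddling'' it, this multiplicity is an affine function of $j$, so the constraint $(\text{affine in }j)=1_G$ over all $j$ in that interval collapses — for $G$ finite it is a periodicity condition checkable modulo $|G|$, and for $G$ having a $\Z$-summand it forces the affine function to be constant and then zero except at endpoints. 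Thus one only needs $O(k)$ ``breakpoints'' (the places where blocks start or end, i.e. the partial sums of the $\sigma(g_i)x_i$) plus the finitely many support points of $h$; guessing the linear order of these $O(k)$ breakpoints and $\operatorname{supp}(h)$, one obtains a system of linear (in)equalities and congruences over $\Z$ in the $k$ exponents whose size is polynomial. Existential Presburger / integer programming in fixed... — rather, linear Diophantine feasibility with congruences — is in $\NP$, so guessing the order plus the semilinear piece plus a witness solution to the resulting linear system, and verifying in polynomial time, places $\Knapsack(G\wr\Z)$ in $\NP$. I would carry out the combinatorics of the breakpoint decomposition most carefully, since that is where the exponential blow-up is tamed.
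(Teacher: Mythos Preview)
Your $\NP$-hardness argument does not work. In the group-theoretic setting of this paper, group elements are encoded as words over the generators, which corresponds to \emph{unary} encoding of integers. Subset sum over $\Z$ in unary is in $\TC^0$ (see the introduction), so ``the classical $\NP$-hardness of subset sum over $\Z$'' simply does not apply, and embedding $\Z$ into $G$ or into $G\wr\Z$ gains nothing. The paper instead reduces from $3$-dimensional matching: one uses the lamp component to record, at positions $i$, $q+j$, $2q+k$, which triples $(i,j,k)$ have been selected, and a separate block of positions to force that exactly $q$ triples are chosen. This reduction genuinely exploits the wreath product structure and works uniformly for all non-trivial $G$.

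Your $\NP$-membership sketch has the right overall shape---separate the cursor movement in $\Z$ from the lamp constraints, then argue that the latter can be checked at polynomially many positions---but the core claim is wrong. The value deposited at position $j$ by a block $g_i^{x_i}$ with $\sigma(g_i)=d_i\ne 0$ is \emph{not} affine in $j$: it is determined by the residue of $j$ modulo $d_i$ (which copy of $\supp(\tau(g_i))$ covers $j$), so over a long interval the lamp configuration is a sum of \emph{periodic} functions with periods $d_1,\ldots,d_k$, not a single affine function. Collapsing the exponential range of positions to polynomially many constraints is exactly the nontrivial point, and your proposal does not address it. The paper handles this in two steps that you are missing: (i)~a combinatorial lemma (\cref{thm-abelian-membership}) showing that for abelian $G$, a sum of periodic $G$-valued sequences with periods $q_1,\ldots,q_n$ vanishes on $[0,m-1]$ iff it vanishes on $[0,\sum q_i-1]$---proved by an inclusion--exclusion identity, not by periodicity alone; and (ii)~an automata-theoretic framework (Cayley representations, NFAs recognizing bounded languages, cycle compressions) that lets one guess an exponentially long witness in compressed form, verify it against the automata, and then reduce to polynomially many instances of $\ExpEq(G)$ and of the periodic membership problem from~(i). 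Your ``$O(k)$ breakpoints'' idea does not survive the periodicity issue without something equivalent to these ingredients.
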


\section{Undecidability: Proof of \cref{undecidability:general}}\label{undecidability}

Our proof of \cref{undecidability:general} employs the undecidability
of the knapsack problem for certain powers of $\DHB$.  In fact, we
need a slightly stronger version, which states undecidability already
for knapsack instances of bounded depths.
\begin{thm}[\cite{KoenigLohreyZetzsche2015a}] \label{thm:KoLoZe}
  There is a fixed constant $m$ and a fixed list of group elements
  $g_1,\ldots,g_k \in \DHB^m$ such that membership in the product
  $\prod_{i=1}^k \langle g_i\rangle$ is undecidable.  In
  particular, there are $k,m\in\N$ such that solvability of knapsack
  instances of depth $k$ is undecidable for $\DHB^m$.
\end{thm}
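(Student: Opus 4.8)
The plan is to reduce from Hilbert's tenth problem in the uniform form supplied by the DPRM theorem: fix a recursively enumerable but undecidable set $S\subseteq\N$ together with a polynomial $U(y,z_1,\dots,z_t)\in\Z[y,z_1,\dots,z_t]$ such that $n\in S$ iff $\exists\,\bar z\in\N^t$ with $U(n,\bar z)=0$. The point is that in the group-theoretic encoding only the parameter $n$ will vary with the input, while the constant $m$, the number $k$, and the elements $g_1,\dots,g_k\in\DHB^m$ are extracted from $U$ alone and are therefore fixed.

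First I would recall how $\DHB$ computes products of exponents. Writing elements of $\DHB$ as triples $(\alpha,\beta,\gamma)\in\Z^3$ with product $(\alpha_1,\beta_1,\gamma_1)(\alpha_2,\beta_2,\gamma_2)=(\alpha_1+\alpha_2,\ \beta_1+\beta_2,\ \gamma_1+\gamma_2+\alpha_1\beta_2)$, the standard generators are $a=(1,0,0)$ and $b=(0,1,0)$, and $a^xb^y=(x,y,xy)$. Thus any product of powers of $a$ and $b^{\pm1}$ stores a bilinear form in the exponents in the third (``central'') coordinate, while the first two coordinates only record linear data; using the inverse generators one can also produce negative contributions. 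Hence a single $\DHB$-factor realises one product $z_iz_j$ of two variables, and a $\Z$-like factor (which embeds into $\DHB$) carries a linear combination. Decomposing $U$ into a conjunction of equations of degree $\le 2$ by introducing auxiliary variables $w=z_iz_j$, I would assign to each such quadratic equation one or two $\DHB$-coordinates that compute its left-hand side in their central entry, and one further coordinate forming the prescribed integer linear combination of these entries; taking the direct product over all of them yields a group $\DHB^m$ with $m$ depending only on $U$, and an exponent expression over $\DHB^m$ whose solution set projects onto $\{\bar z\in\N^t\colon U(n,\bar z)=0\}$. The parameter $n$ enters only as a fixed additive constant inside one central coordinate of the target element $g\in\DHB^m$.

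Second, two reductions turn this into the exact statement. (i) The expression just built reuses variables, whereas a product $\prod_{i=1}^k\langle g_i\rangle$ corresponds to a knapsack expression (each variable occurring once). I would apply the padding-and-equality device from the proof of \cref{kppowers-vs-expeq}: replace each occurrence of a variable by a fresh one and, for each pair of copies $x,x'$ that must agree, append a coordinate realising $a^{x}(a^{-1})^{x'}=1$, which forces $x=x'$ since $\ord(a)=\infty$; interleaving trivial powers makes all coordinates use the same variables in the same order, turning the whole system into a single knapsack expression over $\DHB^{m}$. (ii) Since $\langle g_i\rangle$ allows negative exponents while the $z_i$ range over $\N$, for the $\prod\langle g_i\rangle$ formulation I would add Lagrange four-square constraints $z_i=u_{i,1}^2+\cdots+u_{i,4}^2$ (themselves quadratic, hence encodable as above) to force nonnegativity. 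For the ``in particular'' clause, exponents already range over $\N$, so no four-square padding is needed: the instance $g_1^{x_1}\cdots g_k^{x_k}g^{-1}=1$ is a knapsack instance of depth $k$ over $\DHB^m$ with only the constant $g^{-1}$ depending on the input, and $\Sol$ is nonempty iff $n\in S$. All introduced coordinates and generators are bounded in number by a function of $U$, so $m$, $k$ and $g_1,\dots,g_k$ are genuinely fixed.

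The main obstacle is faithfulness of the encoding: one must rule out spurious group solutions, which requires controlling not only the central entries but also the first two matrix entries in every coordinate, so that products of $a$- and $b$-powers cannot interact across the expression to create unintended central contributions, and so that the linear-combination coordinates really sum the intended monomials with the intended coefficients (multiplicities handled by repetition). Intertwined with this is the uniformity requirement: to make $m$ and the $g_i$ constants the reduction must start from a \emph{universal} Diophantine equation and let $n$ appear only inside $g$ — a mild but essential use of DPRM. These bookkeeping calculations are routine once the coordinate layout is fixed, but choosing that layout correctly is the real work.
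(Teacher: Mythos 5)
This theorem is not proved in the paper at all — it is imported verbatim from \cite{KoenigLohreyZetzsche2015a} — and your sketch reconstructs essentially the argument of that reference: Heisenberg coordinates compute the products $z_iz_j$ in the centre (via $a^xb^ya^{-x'}b^{-y'}$ with a central target forcing $x=x'$, $y=y'$), a direct product of fixed size combines the quadratic and linear equations coming from a universal Diophantine equation, the renaming-plus-equality-coordinate device of \cref{kppowers-vs-expeq} merges the system into a single knapsack expression, and four squares bridge $\Z$- versus $\N$-exponents for the $\prod_{i}\langle g_i\rangle$ formulation. The only detail worth making explicit is how $n$ is kept out of the $g_i$: replace the parameter by a fresh variable $y$, decompose the fixed polynomial $U(y,\bar z)$, and add the single equation $y=n$, so that $n$ genuinely enters only as an additive (central) constant in the target element.
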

We prove \cref{undecidability:general} by showing the following.
\begin{prop}\label{undecidability:with-power}
  There are $m,\ell\in\N$ such that for every non-trivial group $G$, the
  knapsack problem for $G^m\wr (\DHB\times\Z^\ell)$ is undecidable.
\end{prop}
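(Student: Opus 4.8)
The plan is to encode an undecidable knapsack problem over $\DHB^m$ inside a knapsack problem over $G^m \wr (\DHB \times \Z^\ell)$, using the $\Z^\ell$-coordinates of the base group $\DHB \times \Z^\ell$ as ``bookkeeping'' that forces the cursor to return to its starting position and that separates the contributions of the individual powers. First I would fix, via \cref{thm:KoLoZe}, a constant $m$ and group elements $g_1, \ldots, g_k \in \DHB^m$ for which solvability of the depth-$k$ knapsack instance $g_1^{x_1} \cdots g_k^{x_k} = 1$ is undecidable. I also fix a nontrivial element $c \in G$ and work with the single ``slot'' element $f_c \in G^{(\DHB \times \Z^\ell)}$ supported at the identity; since $G$ is only assumed nontrivial, I can only rely on being able to deposit and retrieve copies of $c$ (and its inverse) at chosen positions of the base group.

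The core idea: given a target knapsack instance over $\DHB^m$, build a knapsack expression over $G^m \wr (\DHB \times \Z^\ell)$ whose variables are $x_1, \ldots, x_k$, where the $i$-th power block first moves the cursor by $g_i^{x_i}$ in the $\DHB^m$-direction — but here I need to be careful, because $\DHB^m$ is not a single $\DHB$; this is exactly why the statement uses $G^m \wr (\DHB \times \Z^\ell)$ rather than $G \wr (\DHB \times \Z^\ell)$, so that the $m$ coordinates of the base are not available (the base is $\DHB \times \Z^\ell$, one copy), and instead the ``power'' $G^m$ on the fiber side is what carries the $m$-fold structure, combined with \cref{embedding-powers} to pass between $G^m \wr H$ and $G \wr (H \times K)$ later when deriving \cref{undecidability:general}. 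Concretely, I would choose $\ell$ large enough to host: one $\Z$-coordinate per power $x_i$ (to record ``how often $g_i$ was applied''), used so that the knapsack word, after all $k$ blocks, lands at a specified point of $\Z^\ell$ only if a consistency condition holds; and additional $\Z$-coordinates to force the cursor to return to the origin. Each block $i$ is a product of generators that (a) advances the cursor along the appropriate $\DHB$- and $\Z^\ell$-directions according to $g_i$, (b) deposits a copy of $c$ in the $G^m$-fiber at the current cursor position in a way that records the value $x_i$, and then the final suffix of the knapsack word walks the cursor back through all deposited positions, cancelling the $c$'s, which is possible exactly when the deposited pattern corresponds to a genuine solution of the original $\DHB^m$-instance. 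The net effect is that $\Sol$ of the constructed expression is nonempty iff the original instance is solvable.

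The main obstacle I expect is the combinatorial design that makes the fiber cancellation faithful: I must ensure that the only way for all deposited $G$-elements to cancel and for the cursor to return home is for the $x_i$ to solve the $\DHB^m$-equation, with no ``spurious'' solutions arising from the extra freedom a wreath product gives (e.g.\ depositing and retrieving $c$'s along unintended cursor paths, or exploiting that $G$ may be torsion so that $c^{\ord(c)} = 1$). Handling torsion in $G$ uniformly — so that the reduction works for \emph{every} nontrivial $G$, not just infinite or torsion-free ones — is the delicate point; the standard trick is to use the base-group coordinates (the $\Z^\ell$ part, which is torsion-free) to carry all the arithmetic that needs unbounded counting, and to use the $G^m$-fiber only as a ``one bit per position'' marker that is cleared by an explicitly matched inverse generator, so that the order of $c$ is irrelevant. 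Once \cref{undecidability:with-power} is established, \cref{undecidability:general} follows by applying \cref{embedding-powers} to embed $G^m \wr (\DHB \times \Z^{\ell})$ into $G \wr ((\DHB \times \Z^{\ell}) \times K)$ for a suitable finite group $K$ with $|K| \geq m$, absorbing $K$ into additional $\Z$-coordinates, i.e.\ replacing $\ell$ by a slightly larger value.
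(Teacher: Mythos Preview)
Your sketch has the right flavor---$\Z^\ell$ for bookkeeping, $G^m$ as a marker alphabet, and reducing from the fixed-depth knapsack over $\DHB^m$---but it stops exactly at the step you yourself flag as the obstacle, and the mechanism you gesture at (deposit many copies of $c$ along the cursor trajectory, then ``walk back'' and cancel them one by one) is not the one that makes the reduction faithful. The paper's key device is \cref{no-movement}: in $G\wr H$, for $a\in G\setminus\{1\}$ and $f,g,h\in H$ one has $faga^{-1}h=1$ if and only if $g=1$ and $fh=1$. Rather than depositing many markers, the construction places exactly $m$ markers $a_1,\ldots,a_m\in G^m$ (each $a_j$ nontrivial only in the $j$-th $G$-slot) at the $m$ breakpoints of the expression
\[
F=\Big(\textstyle\prod_{j=1}^m a_j\,E'_j\Big)\;C\;\Big(\textstyle\prod_{j=1}^m a_j^{-1}\,M_j\Big).
\]
Projecting onto the $r$-th $G$-coordinate and applying \cref{no-movement} forces the cursor movement between the placement of $a_r$ and of $a_r^{-1}$ to be trivial, yielding the system $E'_r\cdots E'_m\,C\,M_1\cdots M_{r-1}=1$ for every $r\in[1,m]$. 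These equations, together with the $\Z^\ell$-bookkeeping built into the $E'_j$ and $C$ (one $\Z$-coordinate per pair $(i,j)\in[1,k]\times[1,m]$, so $\ell=mk$), force all $m$ coordinate equations $E_j=1$ to hold with matching exponents $x_{i,1}=\cdots=x_{i,m}$. Your ``one-bit marker'' intuition about torsion in $G$ is exactly right and is why \cref{no-movement} needs only $a\ne 1$; but the faithfulness you worry about comes essentially for free once you use the $m$ distinct coordinates of $G^m$ as \emph{colors} and the no-movement trick, rather than trying to retrace a long path of deposits.

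One small correction to your last sentence: the passage from \cref{undecidability:with-power} to \cref{undecidability:general} via \cref{embedding-powers} takes $K=\Z$, not a finite group. One needs both $|K|\ge m$ and $(\DHB\times\Z^\ell)\times K$ to again be of the form $\DHB\times\Z^{\ell'}$; a nontrivial finite $K$ cannot be ``absorbed into additional $\Z$-coordinates.''
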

Let $k$ and $m$ be the constants from \cref{thm:KoLoZe}.
In order to prove \cref{undecidability:with-power},
consider a knapsack  expression
\begin{equation} E = g_1^{\varx{1}} \cdots  g_k^{\varx{k}} g_{k+1} \end{equation}
with $g_1,\ldots,g_{k+1} \in
\DHB^m$. We can write $g_i=(g_{i,1},\ldots,g_{i,m})$ for $i\in[1,k+1]$, which leads to the expressions
\begin{equation} E_j = g_{1,j}^{\varx{1,j}} \cdots g_{k,j}^{\varx{k,j}} g_{k+1,j}. \end{equation}
Let $\ell=m\cdot k$ and let $\alpha\colon \DHB\times\Z^\ell\to \DHB$
and $\beta\colon \DHB\times\Z^\ell\to\Z^\ell$ be the projection onto
the left and right component, respectively.  For each $p\in [1,\ell]$,
let $e_p\in\Z^\ell$ be the $p$-th unit vector
$e_p=(0,\ldots,0,1,0,\ldots,0)$.  For $j\in [1,m]$
we define the following knapsack expressions over $\DHB\times\Z^\ell$
($0$ denotes the zero vector  of dimension $\ell$):
%\[ E_j = (h_{0,j},0)(g_{1,j},e_{(j-1)k+1})^{\varx{1,j}}(h_{1,j},0) \cdots (g_{k,j},e_{(j-1)k+k})^{\varx{k,j}} (h_{k,j}, 0) \]
\[
 E'_j = \prod_{i=1}^k
(g_{i,j},e_{(j-1)k+i})^{\varx{i,j}}  (g_{k+1,j},0) \quad
\text{ and } \quad  M_j = \prod_{t=1}^\ell (1,-e_{t})^{\varMove{j,t,0}}(1,e_{t})^{\varMove{j,t,1}} .
\]
Note that the term
$(j-1)k+i$ assumes all numbers $1,\ldots,m\cdot k$ as $i$ ranges over
$1,\ldots,k$ and $j$ ranges over $1,\ldots,m$. 
%Observe that
%\begin{equation} \nu(E'_1\cdots E'_m)=(\nu(E_1\cdots E_m), (\nu(\varx{1,1}),\ldots,\nu(\varx{k,1}),\ldots,\nu(\varx{1,m}),\ldots,\nu(\varx{k,m}))). \label{images-eprime}\end{equation}

Since $G$ is non-trivial, there is some $a\in
G\setminus\{1\}$. For each $j\in[1,m]$, let
$a_j=(1,\ldots,1,a,1,\ldots,1)\in G^m$, where the $a$ is in the $j$-th
coordinate. With this, we define
\[
C = \prod_{i=1}^k \bigg(\prod_{j=1}^m (1,-e_{(j-1)k+i})\bigg)^{\varCompare{i}} \quad
\text{ and } \quad  F= \bigg( \prod_{j=1}^m a_j \, E'_j \bigg) \ C \ \bigg( \prod_{j=1}^{m}a_j^{-1} \, M_j \bigg).
\]
Since $G^m$ and $\DHB\times\Z^\ell$ are subgroups of $G^m \wr (\DHB\times\Z^\ell)$, we can treat
$F$ as a knapsack expression over $G^m \wr (\DHB\times\Z^\ell)$.
We will show that $\Sol(F) \neq \emptyset$  if and only if $\Sol(E) \neq \emptyset$. For this we need another
simple lemma:
%For each $j\in[1,m]$, let
%\[  \]
%Then
%\[\nu(M_j)=(1,(\nu(\varMove{j,1,1})-\nu(\varMove{j,1,0}),\ldots,\nu(\varMove{j,\ell,1})-\nu(\varMove{j,\ell,0}))). \]

\begin{lem}\label{no-movement}
  Let $G,H$ be groups and let $a\in G\setminus\{1\}$ and $f,g,h\in H$.
  Regard $G$ and $H$ as subsets of $G\wr H$. Then $faga^{-1}h=1$ if
  and only if $g=1$ and $fh=1$.
\end{lem}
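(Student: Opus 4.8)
The plan is to compute $faga^{-1}h$ directly from the explicit multiplication rule of $G \wr H$ and to read off its two components. Recall that under the identifications $G, H \subseteq G \wr H$ an element $a \in G$ is the pair $(\langle 1_H, a\rangle, 1_H)$ and an element $s \in H$ is the pair $(\mathbf{1}, s)$, where $\mathbf{1}\colon H \to G$ is the constant map $h \mapsto 1_G$ and, for $p \in H$ and $c \in G$, the symbol $\langle p, c\rangle \in G^{(H)}$ denotes the map sending $p$ to $c$ and every other element of $H$ to $1_G$. Note that $\langle p, c\rangle \cdot \langle q, d\rangle$ equals $\langle p, cd\rangle$ if $p = q$, and is supported on exactly $\{p, q\}$ (with values $c$ at $p$ and $d$ at $q$) if $p \neq q$.

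First I would multiply the five factors from left to right. Using the rule $(f_1, h_1)(f_2, h_2) = (f, h_1 h_2)$ with $f(x) = f_1(x) f_2(h_1^{-1} x)$, one obtains successively $fa = (\langle f, a\rangle, f)$, then $fag = (\langle f, a\rangle, fg)$ (right-multiplying by an element of $H$ only moves the cursor), then $faga^{-1} = (\langle f, a\rangle \cdot \langle fg, a^{-1}\rangle,\, fg)$, and finally $faga^{-1}h = (\langle f, a\rangle \cdot \langle fg, a^{-1}\rangle,\, fgh)$. The only slightly delicate point is to resist ``commuting $g$ past $a$'': $g$ and $a$ do not commute in $G \wr H$, and it is precisely the fact that the conjugate $a^{-1}$ ends up at position $fg$ rather than at $f$ that makes the argument work.

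Next I would case-split on $g$. If $g = 1_H$ then $\langle f, a\rangle \cdot \langle fg, a^{-1}\rangle = \langle f, a\rangle \cdot \langle f, a^{-1}\rangle = \langle f, aa^{-1}\rangle = \mathbf{1}$, so $faga^{-1}h = (\mathbf{1}, fh)$, which is the identity of $G \wr H$ if and only if $fh = 1$. If $g \neq 1_H$ then $f \neq fg$, so the $G^{(H)}$-component has support exactly $\{f, fg\}$ with values $a$ and $a^{-1}$; since $a \neq 1_G$ this map is non-trivial, hence $faga^{-1}h \neq 1$ regardless of the cursor value $fgh$. Combining the two cases: if $faga^{-1}h = 1$ then its $G^{(H)}$-component is trivial, which by the case analysis forces $g = 1_H$, and then triviality of the cursor $fgh = fh$ says exactly $fh = 1$; conversely, $g = 1_H$ together with $fh = 1$ gives $faga^{-1}h = (\mathbf{1}, 1_H) = 1$.

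I do not expect a real obstacle: the lemma is a short bookkeeping computation inside the wreath product. The one thing to handle with care is the shift by $h_1^{-1}$ in the multiplication formula (equivalently, the direction of the $H$-action on $G^{(H)}$), together with the observation that for $g \neq 1_H$ no cancellation between the two point masses can occur, since they sit at distinct elements of $H$.
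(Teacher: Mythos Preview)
Your proof is correct and follows essentially the same approach as the paper: both compute the $G^{(H)}$-component of $faga^{-1}h$ to find the values $a$ at $f$ and $a^{-1}$ at $fg$, and observe that when $g\neq 1$ these do not cancel since $a\neq 1$. Your version is simply more explicit in carrying out the multiplication step by step.
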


\begin{proof}
  The right-to-left direction is trivial. For the converse, suppose
  $faga^{-1}h=1$ and $g\ne 1$.  By definition of $G\wr H$, we can
  write $faga^{-1}h=(\zeta, p)$ with $\zeta\in G^{(H)}$ and $p\in H$,
  where $\zeta(f)=a\ne 1$, $\zeta(fg)=a^{-1}\ne 1$, and $p=fgh$. This
  clearly implies $faga^{-1}h\ne 1$, a contradiction. Hence,
  $faga^{-1}h=1$ implies $g=1$ and thus $fh=1$.
\end{proof}
In the proof of the following lemma, we use the simple fact that
every morphism $\varphi\colon G\to G'$ extends uniquely to a morphism
$\hat{\varphi}\colon G\wr H\to G'\wr H$ such that
$\hat{\varphi}\rest_G=\varphi$ and $\hat{\varphi}\rest_H=\id_H$ (the identity mapping on $H$).

\begin{lem}\label{undecidability:solution:characterization}
A valuation $\nu$ for $F$ satisfies $\nu(F)=1$ if and only if
for every $i\in[1,k]$, $j\in[1,m]$, $t\in[1,m-1]$, we have
\begin{alignat}{2} \nu(E_j)&=1, \qquad & \nu(\varx{i,j})&=\nu(\varCompare{i}), \label{undecidability:solution:conditiona}\\
\nu(M_t)&=\nu(E'_t), \qquad & \nu(M_1\cdots M_m)&=1. \label{undecidability:solution:conditionb}
 \end{alignat}
\end{lem}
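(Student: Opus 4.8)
The plan is to reduce the equation $\nu(F)=1$ in $G^m\wr(\DHB\times\Z^\ell)$ to equations over the subgroups $G^m$ and $H:=\DHB\times\Z^\ell$, by projecting onto the $m$ coordinates of $G^m$ and then applying \cref{no-movement}. First, for $j\in[1,m]$ let $\rho_j\colon G^m\to G$ be the $j$-th coordinate projection and $\hat\rho_j\colon G^m\wr H\to G\wr H$ its canonical extension. Since $\bigcap_{j=1}^m\ker\rho_j=\{1\}$, an element of $G^m\wr H$ is trivial iff all of its images under $\hat\rho_1,\ldots,\hat\rho_m$ are trivial, so $\nu(F)=1$ iff $\hat\rho_j(\nu(F))=1$ for every $j$. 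Applying $\hat\rho_j$ to each atom of $F$ deletes every factor $a_{j'}^{\pm1}$ with $j'\ne j$ (because $\rho_j(a_{j'})=1$) and replaces $a_j,a_j^{-1}$ by $a,a^{-1}$, leaving the $H$-valued factors $E'_{j'},C,M_{j'}$ unchanged; hence
\[
\hat\rho_j(\nu(F))=f\,a\,g\,a^{-1}\,h,
\]
where $a\in G\setminus\{1\}$ and $f=\nu(E'_1\cdots E'_{j-1})$, $g=\nu(E'_j\cdots E'_m\,C\,M_1\cdots M_{j-1})$, $h=\nu(M_j\cdots M_m)$ all lie in $H$.

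Next I would invoke \cref{no-movement}: $\hat\rho_j(\nu(F))=1$ iff $g=1$ and $fh=1$. For $j=1$ this gives the two global equations $\nu(E'_1\cdots E'_m\,C)=1$ (call it (A)) and $\nu(M_1\cdots M_m)=1$ (call it (B)). For $2\le j\le m$, substituting (A) and (B) collapses both equations to $\nu(E'_1\cdots E'_{j-1})=\nu(M_1\cdots M_{j-1})$, and left cancellation shows that this family (over $j=2,\ldots,m$) is equivalent to $\nu(E'_t)=\nu(M_t)$ for all $t\in[1,m-1]$ (call it (C)). Thus $\nu(F)=1$ iff all of (A), (B), (C) hold, and (B) and (C) are already two of the four conditions to be established.

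Finally I would unwind (A). Computing componentwise in $H=\DHB\times\Z^\ell$ gives $\nu(E'_j)=\bigl(\nu(E_j),\ \sum_{i=1}^k\nu(\varx{i,j})\,e_{(j-1)k+i}\bigr)$ and $\nu(C)=\bigl(1,\ -\sum_{i=1}^k\nu(\varCompare{i})\sum_{j=1}^m e_{(j-1)k+i}\bigr)$. Since the indices $(j-1)k+i$ run over $[1,\ell]$ without repetition as $i\in[1,k]$, $j\in[1,m]$, equation (A) is equivalent to $\nu(E_1)\cdots\nu(E_m)=1$ in $\DHB$ together with $\nu(\varx{i,j})=\nu(\varCompare{i})$ for all $i,j$, which is another of the four conditions. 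Moreover each $\nu(M_t)$ has trivial $\DHB$-component, so (C) forces $\nu(E_t)=1$ for $t\in[1,m-1]$; combined with $\nu(E_1)\cdots\nu(E_m)=1$ this gives $\nu(E_m)=1$ as well, while conversely $\nu(E_j)=1$ for every $j$ trivially implies $\nu(E_1)\cdots\nu(E_m)=1$. Assembling these equivalences yields exactly the conditions \eqref{undecidability:solution:conditiona}--\eqref{undecidability:solution:conditionb}.

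The main obstacle is the middle step: extracting the factors $f,g,h\in H$ from the non-commutative product $F$ after projection --- the non-commutativity of $\DHB$ makes the order of the $H$-factors matter --- and verifying that the $2m$ equations returned by \cref{no-movement} telescope to precisely the four stated conditions. The componentwise unwinding of (A) is then routine; the only thing to note is that the unit vectors $e_{(j-1)k+i}$ have pairwise disjoint (indeed singleton) supports covering all of $[1,\ell]$, which is where the choice $\ell=mk$ enters.
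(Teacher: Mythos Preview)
Your proposal is correct and follows essentially the same route as the paper: project onto each $G$-coordinate, apply \cref{no-movement} to obtain the equations $g=1$ and $fh=1$ for each $j$, telescope these into (A), (B), (C), and then unwind (A) via the $\alpha$- and $\beta$-projections of $\DHB\times\Z^\ell$. The only cosmetic difference is that the paper records the pair coming from \cref{no-movement} as $\nu(E'_1\cdots E'_mCM_1\cdots M_m)=1$ (i.e.\ $fgh=1$) together with $g=1$, which is of course equivalent to your $fh=1$ and $g=1$.
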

\begin{proof}
  Let $\pi_j\colon G^m\to G$ be the projection morphism onto the $j$-th
  coordinate and let $\hat{\pi}_j\colon G^m\wr (\DHB\times\Z^\ell)\to
  G\wr(\DHB\times \Z^\ell)$ be its extension with
  $\hat{\pi}_j\rest_{\DHB\times\Z^\ell}=\id_{\DHB\times\Z^\ell}$. 
  %Moreover, let
  %$\sigma : G^m\wr (\DHB\times\Z^\ell) \to 
  %\DHB\times\Z^\ell$ be the natural projection defined by \eqref{eq-proj} in the general case. 
  Of course, for $g\in G^m\wr(\DHB\times\Z^\ell)$, we have $g=1$ if and
  only if $\hat{\pi}_j(g)=1$ for every $j\in[1,m]$. %and $\sigma(g)=1$. 
  Observe that
%\[ \nu\left(E'_1\cdots E'_{r-1}a_rE'_rE'_{r+1}\cdots E'_m C M_1\cdots M_{r-1} a_r^{-1}M_rM_{r+1}\cdots M_m\right)=\hat{\pi}_r(\nu(F))=1\]
%  \[\hat{\pi}_r(\nu(F))=\nu\left(\prod_{j=1}^{r-1} E'_j
 %   ~\cdot~a_r ~\cdot~ E'_r~\cdot~\prod_{j={r+1}}^m E'_j ~\cdot~
 %   C ~\cdot~ \prod_{j=1}^{r-1} M_j~\cdot~a_r^{-1} ~\cdot~ M_r ~\cdot~ \prod_{j=r+1}^m
 %   M_j\right)\] 
     \[\hat{\pi}_r(\nu(F))=\nu\left( \bigg(\prod_{j=1}^{r-1} E'_j \bigg)
    \ a  \ \bigg(\prod_{j={r}}^m E'_j \bigg) \
    C \ \bigg( \prod_{j=1}^{r-1} M_j \bigg) \ a^{-1} \ \bigg( \prod_{j=r}^m
    M_j \bigg) \right)\] 
    for every $r\in[1,m]$.  Therefore, according to
  \cref{no-movement}, $\nu(F)=1$ holds if and only if for every
  $r\in[1,m]$, we have
\begin{equation} \nu(E'_1\cdots E'_mCM_1\cdots M_m)=1 \quad \text{ and } \quad \nu(E'_r\cdots E'_mCM_1\cdots M_{r-1})=1. \label{movements-neutral}
\end{equation}
We claim that  \cref{movements-neutral} holds for all $r\in[1,m]$ if and only if 
\begin{equation} \nu(E'_1\cdots E'_mC)=1, \quad \nu(E'_t)=\nu(M_t) \quad \text{ and } \quad \nu(M_1\cdots M_m)=1 \label{undecidability:solution:reformulation}
\end{equation}
for all $t\in[1,m-1]$. First assume that \cref{undecidability:solution:reformulation} holds for all  $t\in[1,m-1]$.
We clearly get $\nu(E'_1\cdots E'_mCM_1\cdots M_m)=1$ and $\nu(E'_r\cdots E'_mCM_1\cdots M_{r-1})=1$ 
for $r = 1$. The equations $\nu(E'_r\cdots E'_mCM_1\cdots M_{r-1})=1$ 
for $r \in [2,m]$ are obtained by conjugating $\nu(E'_1\cdots E'_mC)=1$ with 
$\nu(E'_1)=\nu(M_1), \ldots, \nu(E'_{r-1})=\nu(M_{r-1})$.
Now assume that \cref{movements-neutral} holds for all $r\in[1,m]$.
Taking $r=1$ yields $\nu(E'_1\cdots E'_mC)=1$ and hence $\nu(M_1\cdots M_m)=1$.
Moreover, we have
$\nu(E'_1\cdots E'_{r-1}) = \nu(E'_1 \cdots E'_mCM_1\cdots M_{r-1})= \nu(M_1\cdots M_{r-1})$
for all $r\in[1,m]$, which implies $\nu(E'_t)=\nu(M_t)$ for all $t\in[1,m-1]$.

Observe that by construction of $E'_j$ and $C$, we have
\begin{alignat}{2}
\alpha(\nu(E'_j))&=\nu(E_j), & \qquad \pi_{(j-1)k+i}(\beta(\nu(E'_1\cdots E'_m)))&=\nu(\varx{i,j}),\label{images-a}\\
\alpha(\nu(C))&=1, & \qquad \pi_{(j-1)k+i}(\beta(\nu(C)))&=-\nu(\varCompare{i}). \label{images-c}
\end{alignat}
for every $i\in[1,k]$ and $j\in[1,m]$.

Note that the equations in
\cref{undecidability:solution:reformulation} only involve elements of
$\DHB\times\Z^\ell$.  Since for elements $g\in \DHB\times\Z^\ell$, we have $g=1$ if and
only if $\alpha(g)=1$ and $\beta(g)=1$, the equation
$\nu(E'_1\cdots E'_mC)=1$ is equivalent to
$\alpha(\nu(E'_1\cdots E'_mC))=1$ and $\beta(\nu(E'_1\cdots
E'_mC))=1$. By \crefrange{images-a}{images-c}, this is equivalent to
$\nu(E_1\cdots E_m)=1$ and
$\nu(\varx{i,j})=\nu(\varCompare{i})$ for all $i\in[1,k]$ and
$j\in[1,m]$.  Finally, $\nu(E'_t)=\nu(M_t)$ implies
$\nu(E_t)=\alpha(\nu(E'_t))=\alpha(\nu(M_t))=1$ for all $t \in [1,m-1]$ and hence also $\nu(E_m)=1$. Thus,
\cref{undecidability:solution:reformulation} is equivalent to the
conditions in the
\lcnamecref{undecidability:solution:characterization}.
\end{proof}

\begin{lem}
$\Sol(F) \neq \emptyset$  if and only if $\Sol(E) \neq \emptyset$.
\end{lem}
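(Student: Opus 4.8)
The plan is to derive both implications from the characterization of the solutions of $F$ obtained in \cref{undecidability:solution:characterization}: a valuation $\nu$ for $F$ satisfies $\nu(F)=1$ exactly when $\nu(E_j)=1$ for all $j\in[1,m]$, $\nu(\varx{i,j})=\nu(\varCompare{i})$ for all $i\in[1,k]$ and $j\in[1,m]$, $\nu(M_t)=\nu(E'_t)$ for all $t\in[1,m-1]$, and $\nu(M_1\cdots M_m)=1$. After invoking this \lcnamecref{undecidability:solution:characterization}, no further reasoning about the wreath product is needed; one only manipulates solutions over $\DHB$ and $\Z^\ell$.

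For the direction ``$\Sol(F)\neq\emptyset\Rightarrow\Sol(E)\neq\emptyset$'' I would take a valuation $\nu$ with $\nu(F)=1$ and set $c_i:=\nu(\varCompare{i})$. Then $\nu(\varx{i,j})=c_i$ for all $j$, so each equation $\nu(E_j)=1$ reads $g_{1,j}^{c_1}\cdots g_{k,j}^{c_k}g_{k+1,j}=1$ in $\DHB$. Since $E=g_1^{\varx{1}}\cdots g_k^{\varx{k}}g_{k+1}$ with $g_i=(g_{i,1},\ldots,g_{i,m})$, this says exactly that the valuation $\varx{i}\mapsto c_i$ lies in $\Sol(E)$.

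For the converse I would take $\mu\in\Sol(E)$, put $c_i:=\mu(\varx{i})\in\N$ (so that $g_{1,j}^{c_1}\cdots g_{k,j}^{c_k}g_{k+1,j}=1$ in $\DHB$ for every $j$), and construct a valuation $\nu$ for $F$ meeting the four conditions. Setting $\nu(\varx{i,j})=c_i$ and $\nu(\varCompare{i})=c_i$ already gives $\nu(E_j)=1$ and $\nu(\varx{i,j})=\nu(\varCompare{i})$, and it remains to fix the move variables. Since $M_j$ lives in the $\Z^\ell$-subgroup with $\nu(M_j)=\big(1,\textstyle\sum_{t=1}^\ell(\nu(\varMove{j,t,1})-\nu(\varMove{j,t,0}))\,e_t\big)$, the $\Z^\ell$-component of $\nu(M_j)$ can be made equal to an arbitrary vector of $\Z^\ell$ by a non-negative valuation, routing the positive coordinates through the variables $\varMove{j,t,1}$ and the negative ones through the variables $\varMove{j,t,0}$. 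By \cref{images-a,images-c} we have $\nu(E'_j)=\big(1,\sum_{i=1}^k c_i\,e_{(j-1)k+i}\big)$, its $\DHB$-part being $\nu(E_j)=1$. Hence for $j\in[1,m-1]$ I would set $\nu(\varMove{j,(j-1)k+i,1})=c_i$ for $i\in[1,k]$ and all other move variables of $M_j$ to $0$, which gives $\nu(M_j)=\nu(E'_j)$; and for $j=m$ I would set $\nu(\varMove{m,(j'-1)k+i,0})=c_i$ for $j'\in[1,m-1]$ and $i\in[1,k]$ and all other move variables of $M_m$ to $0$, so that $\nu(M_m)=\big(1,-\sum_{j'=1}^{m-1}\sum_{i=1}^k c_i\,e_{(j'-1)k+i}\big)=\big(\prod_{j=1}^{m-1}\nu(M_j)\big)^{-1}$ and hence $\nu(M_1\cdots M_m)=1$. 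By \cref{undecidability:solution:characterization}, $\nu(F)=1$, so $\Sol(F)\neq\emptyset$.

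The only slightly delicate point is this last choice of move variables, and it works precisely because each $M_j$ is built from both $(1,e_t)$ and $(1,-e_t)$ (so arbitrary integer vectors are realizable with non-negative exponents) and because $\ell=m\cdot k$, so that the indices $(j-1)k+i$ with $i\in[1,k]$, $j\in[1,m]$ enumerate $[1,\ell]$ bijectively. Everything else is an immediate consequence of \cref{undecidability:solution:characterization}.
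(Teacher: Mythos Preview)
Your proof is correct and follows essentially the same approach as the paper: both directions are derived from \cref{undecidability:solution:characterization}, with the forward direction reading off a solution of $E$ from the constraints $\nu(\varx{i,j})=\nu(\varCompare{i})$, and the backward direction extending a solution of $E$ by choosing the move variables so that each $\nu(M_j)$ hits the required element of $\{1\}\times\Z^\ell$. The only difference is that the paper leaves the choice of the $\varMove{j,t,b}$ implicit (noting merely that any element of $\{1\}\times\Z^\ell$ is attainable), whereas you spell out explicit values; both are fine.
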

\begin{proof}
  If $\nu(F)=1$, then according to
  \cref{undecidability:solution:characterization}, the valuation also
  satisfies $\nu(E_j)=1$ and
  $\nu(\varx{i,j})=\nu(\varCompare{i})$ for $i\in[1,k]$ and
  $j\in[1,m]$.  In particular $\nu(\varx{i,j})=\nu(\varx{i,j'})$
  for $j,j'\in[1,m]$. Thus, we have
  \[ g_1^{\nu(\varx{1,1})} \cdots g_k^{\nu(\varx{k,1})} g_{k+1}=1 \]
  and hence $\Sol(E) \neq \emptyset$.

  Suppose now that $\Sol(E) \neq \emptyset$. Then there is a valuation $\nu$ with
  $\nu(E_j)=1$ and $\nu(\varx{i,j})=\nu(\varx{i,j'})$ for
  $i\in[1,k]$ and $j,j'\in[1,m]$. We shall prove that we can
  extend $\nu$ so as to satisfy the conditions of
  \cref{undecidability:solution:characterization}.

  The left-hand equation in \cref{undecidability:solution:conditiona}
  is fulfilled already.  Since
  $\nu(\varx{i,j})=\nu(\varx{i,j'})$, setting
  $\nu(\varCompare{i})=\nu(\varx{i,1})$ will satisfy the
  right-hand equation of
  \cref{undecidability:solution:conditiona}. Finally, observe that by
  assigning suitable values to the variables $\varMove{j,s,b}$ for $j\in[1,m]$,
  $s\in[1,\ell]$, and $b\in\{0,1\}$, we can enforce any value from
  $\{1\}\times \Z^\ell$ for $\nu(M_j)$.  Therefore, we can extend
  $\nu$ so that it satisfies \cref{undecidability:solution:conditionb}
  as well.
\end{proof}
This completes the proof of \cref{undecidability:with-power}, which
allows us to prove \cref{undecidability:general}.
\begin{proof}[Proof of \cref{undecidability:general}]
  By \cref{undecidability:with-power}, there are $\ell,m\in\N$ such
  that the knapsack problem is undecidable for
  $G^m\wr(\DHB\times\Z^\ell)$.  According to \cref{embedding-powers},
  the group $G^m\wr(\DHB\times\Z^\ell)$ is a subgroup of
  $G\wr(\DHB\times\Z^{\ell+1})$, meaning that the latter also has an
  undecidable knapsack problem.
\end{proof}

\section{Decidability: Proof of \cref{thm-semilinear1} and \cref{thm-semilinear2}}\label{decidability}

Let us fix a wreath product $G \wr H$. Recall the projection homomorphisms $\sigma = \sigma_{G \wr H} \colon G \wr H \to H$ 
and $\tau = \tau_{G \wr H} \colon G \wr H \to G^{(H)}$ 
from \eqref{eq-proj}. For $g \in G \wr H$ we write $\supp(g)$ for $\supp(\tau(g))$.

A knapsack expression $E=h_0g^{x_1}_1h_1\cdots g^{x_k}_kh_k$ over
$G \wr H$ is called
\emph{torsion-free} if for each $i\in[1,k]$, either $\sigma(g_i)=1$ or
$\sigma(g_i)$ has infinite order.  A map $\varphi\colon\N^a\to\N^b$ is
called \emph{affine} if there is a matrix $A\in\N^{b\times a}$
and a vector $\mu\in\N^b$ such that $\varphi(\nu)=A\nu+\mu$ for every
$\nu\in\N^a$.  
\begin{prop}\label{torsion-free-instances}
  Let knapsack be decidable for $H$.
  For every knapsack expression $E$ over $G\wr H$, one can construct
  torsion-free expressions $E_1,\ldots,E_r$ and affine maps
  $\varphi_1,\ldots,\varphi_r$ such that $\Sol(E)=\bigcup_{i=1}^r \varphi_i(\Sol(E_i))$.
\end{prop}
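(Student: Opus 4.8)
The plan is to reduce an arbitrary knapsack expression over $G\wr H$ to a finite union of images (under affine maps) of torsion-free knapsack expressions, by handling each power $g_i^{x_i}$ for which $\sigma(g_i)$ is a non-trivial torsion element of $H$ separately. Write $E = h_0 g_1^{x_1} h_1 \cdots g_k^{x_k} h_k$ and let $I = \{ i \in [1,k] : 1 \ne \ord(\sigma(g_i)) < \infty \}$ be the set of ``bad'' indices. For $i \in I$, put $n_i = \ord(\sigma(g_i))$. The key observation is that $g_i^{n_i}$ lies in the base group $G^{(H)}$, since $\sigma(g_i^{n_i}) = 1$; so for $i \in I$ we can split $x_i = n_i q_i + r_i$ with $0 \le r_i < n_i$, and replace $g_i^{x_i}$ by $(g_i^{n_i})^{q_i} g_i^{r_i}$. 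As $g_i^{n_i} \in G^{(H)} \le G\wr H$ has $\sigma(g_i^{n_i}) = 1$, the new power $(g_i^{n_i})^{q_i}$ is torsion-free in the required sense.

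Concretely, I would iterate over all choices of residue tuples $(r_i)_{i \in I}$ with $0 \le r_i < n_i$ — there are $\prod_{i \in I} n_i$ of them, a fixed finite number computable since $H$ has a decidable word problem (which follows from decidability of knapsack for $H$, needed to compute the $n_i$ in the first place). For each such tuple $\mathbf{r}$, form the expression $E_{\mathbf{r}}$ obtained from $E$ by replacing, for each $i \in I$, the factor $g_i^{x_i}$ by $(g_i^{n_i})^{x_i'} \, g_i^{r_i}$, where $x_i'$ is a fresh variable, and leaving the factors $g_i^{x_i}$ with $i \notin I$ untouched; the constants $g_i^{r_i}$ get absorbed into the adjacent $h_i$'s, so $E_{\mathbf{r}}$ is again a genuine knapsack expression, and it is torsion-free by the observation above (each surviving power has $\sigma$-image either trivial or of infinite order). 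The affine map $\varphi_{\mathbf{r}}$ sends a solution $\nu'$ of $E_{\mathbf{r}}$, which assigns values to $\{x_i : i \notin I\} \cup \{x_i' : i \in I\}$, to the valuation $\nu$ with $\nu(x_i) = \nu'(x_i)$ for $i \notin I$ and $\nu(x_i) = n_i \, \nu'(x_i') + r_i$ for $i \in I$; this is plainly affine with nonnegative integer coefficients. Then $\Sol(E) = \bigcup_{\mathbf{r}} \varphi_{\mathbf{r}}(\Sol(E_{\mathbf{r}}))$: every solution of $E$ determines a unique residue tuple via Euclidean division and a corresponding solution of $E_{\mathbf{r}}$, and conversely every solution of some $E_{\mathbf{r}}$ maps back to a solution of $E$ since the substitution preserves the evaluated group element.

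I do not expect a serious obstacle here; the one point requiring care is that the $n_i = \ord(\sigma(g_i))$ must be effectively computable. This is where the hypothesis that knapsack is decidable for $H$ enters: decidability of $\Knapsack(H)$ implies $H$ has decidable word problem (test $g^{x} = 1$ — actually more directly, $\sigma(g_i)$ has finite order iff the knapsack instance $\sigma(g_i)^{x} \sigma(g_i) = 1$, i.e.\ a word-problem-style check, has a solution, and one can then search for the order), so we can decide for each $i$ whether $\sigma(g_i)$ is a torsion element and, if so, compute its order by brute-force search. A minor bookkeeping subtlety is that after absorbing the constants $g_i^{r_i}$ into neighbouring $h_j$'s one should double-check that variables remain distinct (they do: the fresh $x_i'$ replace the $x_i$, and indices outside $I$ are unchanged), so $E_{\mathbf{r}}$ is legitimately a knapsack expression. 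With these points addressed, the decomposition is immediate.
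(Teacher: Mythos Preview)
Your proposal is correct and follows essentially the same approach as the paper: both replace each bad power $g_i^{x_i}$ by $(g_i^{n_i})^{q_i}\,g_i^{r_i}$ with $n_i=\ord(\sigma(g_i))$, branch over the residues $r_i$, and use the affine map $q_i\mapsto n_i q_i + r_i$. The only cosmetic difference is that the paper eliminates one bad index at a time by induction on $|t(E)|$, whereas you handle all bad indices simultaneously; your treatment of the computability of the $n_i$ from decidability of $\Knapsack(H)$ is also slightly more explicit than the paper's.
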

\begin{proof}
  First of all, note that since knapsack is decidable for $H$, we can
  decide for which $i$ the element $\sigma(g_i)\in H$ has finite or
  infinite order.  For a knapsack expression $F=h_0g_1^{x_1}h_1\cdots
  g_k^{x_k}h_k$, let $t(F)$ be the set of indices of $i\in[1,k]$ such
  that $\sigma(g_i)\ne 1$ and $\sigma(g_i)$ has finite order.  We show
  that if $|t(E)|>0$, then one can construct expressions
  $E_0,\ldots,E_{r-1}$ and affine maps
  $\varphi_0,\ldots,\varphi_{r-1}$ such that $|t(E_j)|<|t(E)|$ and
  $\Sol(E)=\bigcup_{j=0}^{r-1} \varphi_j(\Sol(E_j))$. This suffices,
  since the composition of affine maps is again an affine
  map.

  Suppose $E=h_0g_1^{x_1}h_1\cdots g_k^{x_k}h_k$ and $\sigma(g_i)\ne
  1$ has finite order $r$. Note that we can compute $r$. For every
  $j\in[0,r-1]$, let
  \[ E_j = h_0 g_1^{x_1} h_1 \cdots g_{i-1}^{x_{i-1}} h_{i-1}
  (g_i^r)^{x_i} (g_i^jh_i) g_{i+1}^{x_{i+1}} h_{i+1}\cdots
  g_k^{x_k}h_k. \] 
  Let $X = \{ x_1, \ldots, x_r \}$.
  Moreover, let $\varphi\colon \N^X\to \N^X$ be the affine map
  such that for $\nu\in\N^X$, we have
  $\varphi_j(\nu)(x_\ell)=\nu(x_\ell)$ for $\ell\ne i$ and
  $\varphi_j(\nu)(x_i)=r\cdot\nu(x_i)+j$. Note that then 
  $\sigma(g_i^r)=\sigma(g_i)^r=1$ and thus
  $t(E_j)=t(E)\setminus\{i\}$. Furthermore, we clearly have
  $\Sol(E)=\bigcup_{j=0}^{r-1} \varphi_j(\Sol(E_j))$.
\end{proof}
Since the image of a semilinear set under an affine map is again
semilinear, \cref{torsion-free-instances} tells us that it suffices to
prove \cref{thm-semilinear1,thm-semilinear2} for torsion-free knapsack
expressions. For the rest of this section let us fix a torsion-free
knapsack expression $E$ over $G \wr H$. We can assume that
$E=g^{x_1}_1g_2^{x_2} \cdots g^{x_k}_k g_{k+1}$ (note that if $g$ has
infinite order than also $c^{-1} g c$ has infinite order).  We
partition the set $V_E=\{x_1,\ldots,x_k\}$ of variables in $E$ as
$V_E=S\uplus M$, where $S=\{x_i\in V_E \mid \sigma(g_i)=1\}$ and
$M=\{x_i\in V_E \mid \ord(\sigma(g_i))=\infty\}$.  In this situation,
the following notation will be useful. If $U=A\uplus B$ for a set of
variables $U \subseteq V$ and $\mu\in\N^A$ and $\kappa\in\N^B$, then
we write $\mu\oplus\kappa \in \N ^U$ for the valuation with
$(\mu\oplus\kappa)(x)=\mu(x)$ for $x\in A$ and
$(\mu\oplus\kappa)(x)=\kappa(x)$ for $x\in B$.

\subsection*{Computing powers.} A key observation in our proof is that
in order to compute the group element $\tau(g^m)(h)$ (in the cursor intuition, this
is the element labelling the point $h \in H$ in the wreath product element $g^m$)
 for $h \in H$ and $g\in G\wr H$, where $\sigma(g)$
has infinite order, one only has to perform at most $|\supp(g)|$ 
many multiplications in $G$, yielding a bound independent of $m$. Let us
make this precise.  Suppose $h\in H$ has infinite order.  For $h', h''\in
H$, we write $h'\preccurlyeq_h h''$ if there is an $n\ge 0$ with
$h'=h^n h''$.  Then, $\preccurlyeq_h$ is transitive. Moreover, since
$h$ has infinite order, $\preccurlyeq_h$ is also anti-symmetric and
thus a partial order. Observe that if knapsack is
decidable for $H$, given $h,h',h''\in H$, we can decide whether $h$ has
infinite order and whether $h'\preccurlyeq_h h''$.  It turns out that for
$g\in G\wr H$, the order $\preccurlyeq_{\sigma(g)}$ tells us how to
evaluate the mapping $\tau(g^m)$ at a certain element of $H$. 
Before we make this precise, we need some notation.

We will sometimes want to multiply all elements $a_i$ for $i\in I$
such that the order in which we  multiply is specified by some linear
order on $I$. If $(I,\le)$ is a finite linearly ordered set with
$I=\{i_1,\ldots,i_n\}$, $i_1<i_2<\ldots<i_n$, then we write
$\orderprod[\le]{i\in I} a_i$ for $\prod_{j=1}^n a_{i_j}$. If the
order $\le$ is clear from the context, we just write $\orderprod{i\in
  I} a_i$.
\begin{lem}\label{compute-powers-inf}
  Let $g\in G\wr H$ such that $\ord(\sigma(g))=\infty$ and let $h\in H$, $m \in \mathbb{N}$.
  Moreover let $F=\supp(g)\cap \{\sigma(g)^{-i}h\mid i\in [0,m-1]\}$.
  Then $F$ is linearly ordered by $\preccurlyeq_{\sigma(g)}$ and
%, say
%  $F=\{f_1\preccurlyeq_{\sigma(g)}\cdots \preccurlyeq_{\sigma(g)}
%  f_r\}$.  Moreover, we have
\[ \tau(g^m)(h)=\orderprod[\preccurlyeq_{\sigma(g)}]{h'\in F} \tau(g)(h'). \]
\end{lem}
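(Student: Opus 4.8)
The plan is to unfold the product $g^m = g \cdot g \cdots g$ ($m$ factors) explicitly using the wreath product multiplication rule. Writing $g = (f, s)$ with $f = \tau(g)$ and $s = \sigma(g)$, an easy induction on $m$ gives $g^m = (f_m, s^m)$ where $f_m(h) = \prod_{i=0}^{m-1} f(s^{-i} h)$, and the order of multiplication in this product is dictated by the semidirect product action: the factor coming from the $i$-th copy of $g$ (counting from the left, $i = 0, \ldots, m-1$) contributes $f(s^{-i}h)$, and these appear left-to-right in order of increasing $i$. So first I would state and prove this unfolding identity by induction, keeping careful track of the multiplication order.

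**Identifying which factors are nontrivial.** The key point is that $f = \tau(g)$ has finite support, so $f(s^{-i}h) = 1_G$ for all but finitely many pairs; specifically $f(s^{-i}h) \neq 1_G$ only when $s^{-i}h \in \supp(g)$, i.e.\ when $s^{-i}h \in F$ with $F = \supp(g) \cap \{s^{-i}h \mid i \in [0,m-1]\}$ as in the statement. Since $\ord(s) = \infty$, the map $i \mapsto s^{-i}h$ is injective, so each element $h' \in F$ arises from exactly one index $i = i(h')$, and the surviving factors in $f_m(h) = \tau(g^m)(h)$ are precisely $\{\tau(g)(h') \mid h' \in F\}$, multiplied in order of increasing $i(h')$.

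**Translating the index order into $\preccurlyeq_{\sigma(g)}$.** It remains to check that ordering the elements of $F$ by increasing $i(h')$ is the same as ordering them by $\preccurlyeq_{s}$. By definition $h' \preccurlyeq_s h''$ iff $h' = s^n h''$ for some $n \geq 0$; applied to $h' = s^{-i(h')}h$ and $h'' = s^{-i(h'')}h$, this says $s^{-i(h')}h = s^{n} s^{-i(h'')}h$, equivalently (cancelling $h$ and using $\ord(s) = \infty$) $-i(h') = n - i(h'')$, i.e.\ $i(h') \leq i(h'')$. Hence $h' \preccurlyeq_s h''$ iff $i(h') \leq i(h'')$, which shows $F$ is linearly ordered by $\preccurlyeq_s$ (it is the pullback of a linear order under an injection) and that $\orderprod[\preccurlyeq_{\sigma(g)}]{h' \in F} \tau(g)(h')$ is exactly the product of surviving factors in the correct order. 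Combining with the unfolding identity gives $\tau(g^m)(h) = \orderprod[\preccurlyeq_{\sigma(g)}]{h' \in F} \tau(g)(h')$.

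**Main obstacle.** The only genuinely delicate point is bookkeeping the order of multiplication in the non-abelian group $G$: the wreath product rule $(f_1, h_1)(f_2, h_2) = (f, h_1 h_2)$ with $f(a) = f_1(a) f_2(h_1^{-1}a)$ must be iterated with the convention that earlier copies of $g$ contribute left factors, and one must verify this matches the direction of $\preccurlyeq_s$ (elements further "to the left" under repeated application of $s^{-1}$ come first). Everything else — finiteness of the support, injectivity from $\ord(s) = \infty$, antisymmetry of $\preccurlyeq_s$ — is routine and already noted in the surrounding text. I would therefore spend the bulk of the written proof on the induction establishing $f_m(h) = f(h) f(s^{-1}h) \cdots f(s^{-(m-1)}h)$ with the order made explicit, and treat the $\preccurlyeq_s$ translation as a short verification.
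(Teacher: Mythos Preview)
Your proposal is correct and follows essentially the same approach as the paper's proof: unfold $\tau(g^m)(h)$ as the ordered product $\prod_{i=0}^{m-1}\tau(g)(\sigma(g)^{-i}h)$ via the multiplication rule, drop the trivial factors to reduce the index set to those $i$ with $\sigma(g)^{-i}h\in\supp(g)$, and then verify that increasing $i$ corresponds to $\preccurlyeq_{\sigma(g)}$. Your writeup is somewhat more explicit about the bijection $i\mapsto s^{-i}h$ and the equivalence $h'\preccurlyeq_s h''\iff i(h')\le i(h'')$, whereas the paper checks only consecutive indices, but the argument is the same.
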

\begin{proof}
  By definition of $G\wr H$, we have
  $\tau(g_1g_2)(h)=\tau(g_1)(h)\cdot\tau(g_2)(\sigma(g_1)^{-1}h)$.  By
  induction, this implies
  \[ \tau(g^m)(h)=\prod_{i=0}^{m-1} \tau(g)(\sigma(g)^{-i}h)=\prod_{j=1}^n \tau(g)(\sigma(g)^{-i_j}h), \]
  where $\{i_1,\ldots,i_n\}=\{i\in[0,m-1] \mid \sigma(g)^{-i}h\in
  \supp(g)\}$ with $i_1<\cdots<i_n$.  Note that then
  $F=\{\sigma(g)^{-i_j}h \mid j\in[1,n]\}$. Since
  $\sigma(g)^{-i_j}h =\sigma(g)^{i_{j+1}-i_j} \sigma(g)^{-i_{j+1}}h$,
  we have $\sigma(g)^{-i_1}h\preccurlyeq_{\sigma(g)}\cdots
  \preccurlyeq_{\sigma(g)}\sigma(g)^{-i_n}h$.
%, meaning that $F$ is
%  linearly ordered by $\preccurlyeq_{\sigma(g)}$.
\end{proof}

\begin{lem}\label{compute-powers-torsion}
Let $g\in G\wr H$ with $\sigma(g)=1$ and $h\in H$. Then $\tau(g^m)(h)=(\tau(g)(h))^m$.
\end{lem}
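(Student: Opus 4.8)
The plan is to argue by a straightforward induction on $m$, using the multiplication rule for $G\wr H$ exactly as in the proof of \cref{compute-powers-inf}. For $m=0$ both sides are $1_G$, and for $m=1$ there is nothing to prove. For the inductive step I would apply the identity $\tau(g_1g_2)(h)=\tau(g_1)(h)\cdot\tau(g_2)(\sigma(g_1)^{-1}h)$ with $g_1=g^m$ and $g_2=g$. Since $\sigma$ is a homomorphism and $\sigma(g)=1$, we have $\sigma(g^m)=\sigma(g)^m=1$, so the shift by $\sigma(g^m)^{-1}$ is trivial; hence $\tau(g^{m+1})(h)=\tau(g^m)(h)\cdot\tau(g)(h)$, and by the induction hypothesis this equals $(\tau(g)(h))^m\cdot\tau(g)(h)=(\tau(g)(h))^{m+1}$.

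Equivalently — and perhaps even cleaner — one can read the claim straight off the definition of the group operation: when $\sigma(g)=1$ the element $g$ is identified with $(f,1_H)$ for $f=\tau(g)$, and the product of two such elements is $(f,1_H)(f',1_H)=(ff',1_H)$ with $ff'$ the pointwise product in $G^{(H)}$. An immediate induction then gives $g^m=(f^m,1_H)$, where $f^m$ denotes the $m$-th pointwise power of $f$, and evaluating at $h$ yields $\tau(g^m)(h)=f(h)^m=(\tau(g)(h))^m$.

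This is the torsion counterpart of \cref{compute-powers-inf}: whereas there the support points contributing to $\tau(g^m)(h)$ are spread out along the $\preccurlyeq_{\sigma(g)}$-chain, here the cursor of $g^m$ never moves, so every one of the $m$ factors is evaluated at the same point $h$. There is no real obstacle; the only thing to keep an eye on is that $\sigma(g^m)=1$, which guarantees that no shifting occurs when forming the product $g^m\cdot g$, and this is immediate since $\sigma$ is a homomorphism.
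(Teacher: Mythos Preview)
Your proof is correct and follows essentially the same approach as the paper: both use the identity $\tau(g_1g_2)(h)=\tau(g_1)(h)\cdot\tau(g_2)(\sigma(g_1)^{-1}h)$ together with $\sigma(g)=1$ to conclude that no shifting occurs. The paper unrolls the product $\tau(g^m)(h)=\prod_{i=0}^{m-1}\tau(g)(\sigma(g)^{-i}h)=(\tau(g)(h))^m$ in a single line rather than by induction, but this is the same argument.
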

\begin{proof}
  Recall that for $g_1,g_2\in G\wr H$, we have
  $\tau(g_1g_2)(f)=\tau(g_1)(h)\cdot \tau(g_2)(\sigma(g_1)^{-1}h)$.
  Therefore, if $\sigma(g)=1$, then $\tau(g^m)(h)=\prod_{i=0}^{m-1}
  \tau(g)(\sigma(g)^{-i}h)=(\tau(g)(h))^m$.
\end{proof}

\subsection*{Addresses.}
 A central concept in our proof is that of an
address. Intuitively, a solution to the equation $E=1$ can be thought
of as a sequence of instructions on how to walk through the Cayley
graph of $H$ and place elements of $G$ at those nodes.  Here, being a
solution means that in the end, all the nodes contain the identity of
$G$.  In order to express that every node carries $1$ in the end, we
want to talk about at which points in the product
$E=g^{x_1}_1g_2^{x_2} \cdots g^{x_k}_k g_{k+1}$ a particular node is visited.  An
address is a datum that contains just enough information about such a
point to determine which element of $G$ has been placed during that
visit.

A pair $(i,h)$ with $i\in[1,k+1]$, and $h\in H$ is called an \emph{address} if
$h \in \support(g_i)$.
The set of addresses of the expression $E$ is denoted by $A$. Note
that $A$ is finite and computable. 
To each address $(i,h)$, we associate the group element $\gamma(i,h) = g_i$ of the expression $E$.

\subsection*{A linear order on addresses.} We will see that if a node
is visited more than once, then (i) each time\footnote{Here, we count
  two visits inside the same factor $g_i$, $i \in [1,k]$, with
  $\sigma(g_i)=1$ as one visit.} it does so at a different address and
(ii) the order of these visits only depends on the addresses.  To
capture the order of these visits, we define a linear order on
addresses.

We partition $A=\bigcup_{i\in[1,k+1]} A_i$, where $A_i =\{(i,h) \mid h \in \support(g_i) \}$
for $i\in [1,k+1]$. Then, for $a\in A_i$ and $a'\in
A_j$, we let $a<a'$ if and only if $i < j$.
It remains to order addresses within each $A_i$. Within $A_{k+1}$, we
pick an arbitrary order. If $i \in [1,k]$ and $\sigma(g_i)=1$, we also order $A_i$
arbitrarily. Finally, if $i \in [1,k]$ and $\sigma(g_i)$ has infinite order, then we
pick a linear order $\le$ on $A_i$ so that for
$h,h'\in\support(g_i)$, $h\preccurlyeq_{\sigma(g_i)} h'$
implies $(i,h)\le (i,h')$. Note that this is possible since
$\preccurlyeq_{\sigma(g_i)}$ is a partial order on $H$.

\subsection*{Cancelling profiles.} In order to express that a solution
for $E$ yields the identity at every node of the Cayley graph of $H$,
we need to compute the element of $G$ that is placed after the various
visits at a particular node. We therefore, associate to each address
an expression over $G$ that yields the element placed during a visit
at this address $a\in A$.  In analogy to $\tau(g)$ for $g\in G\wr H$,
we denote this expression by $\tau(a)$.  If $a=(k+1,h)$, then we set
$\tau(a)=\tau(g_{k+1})(h)$. Now, let $a = (i,h)$ for 
$i \in [1,k]$. If  $\sigma(g_i)=1$, then
$\tau(a)=\tau(g_i)(h)^{x_i}$. Finally, if 
$\sigma(g_i)$ has infinite order, then $\tau(a)=\tau(g_i)(h)$.

This allows us to express the element of $G$ that is placed at a node
$h\in H$ if $h$ has been visited with a particular set of addresses.
To each subset $C\subseteq A$, we assign the expression
$E_C=\prod_{a\in C} \tau(a)$, where the order of multiplication is
given by the linear order on $A$. Observe that only variables in
$S\subseteq\{x_1,\ldots,x_k\}$ occur in $E_C$. Therefore, given
$\kappa\in\N^S$, we can evaluate $\kappa(E_C)\in G$. We say that
$C\subseteq A$ is \emph{$\kappa$-cancelling} if $\kappa(E_C)=1$.

In order to record which sets of addresses can cancel simultaneously
(meaning: for the same valuation), we use profiles.  A \emph{profile}
is a subset of $\Powerset{A}$ (the power set of $A$). A profile $P\subseteq\Powerset{A}$ is
said to be \emph{$\kappa$-cancelling} if every $C\in P$ is
$\kappa$-cancelling. A profile is \emph{cancelling} if it is
$\kappa$-cancelling for some $\kappa\in \N^S$.

\subsection*{Clusters.} We also need to express that there is a node
$h\in H$ that is visited with a particular set of addresses. To this
end, we associate to each address $a\in A$ another expression
$\sigma(a)$.  As opposed to $\tau(a)$, the expression $\sigma(a)$ is
over $H$ and variables $M' = M\cup \{y_i \mid x_i\in
M\}$. Let $a=(i,h)\in A$. When we define $\sigma(a)$, we will also
include factors $\sigma(g_j)^{x_j}$ and $\sigma(g_j)^{y_j}$ where
$\sigma(g_j)=1$. However, since these factors do not affect the
evaluation of the expression, this should be interpreted as leaving
out such factors.
\begin{enumerate}
\item If $i = k+1$ then $\sigma(a)= \sigma(g_1)^{x_1}\cdots \sigma(g_k)^{x_k} h$.
\item If $i \in [1,k]$ then 
  $\sigma(a)= \sigma(g_1)^{x_1} \cdots \sigma(g_{i-1})^{x_{i-1}} \sigma(g_i)^{y_i}h$.
\end{enumerate}
We now want to express that when multiplying
$g_1^{\nu(x_1)}\cdots g_k^{\nu(x_k)}g_{k+1}$, there is a node $h\in
H$ such that the set of addresses with which one visits $h$ is
precisely $C\subseteq A$. In this case, we will call $C$ a cluster.

Let $\mu\in\N^M$ 
and $\mu'\in\N^{M'}$. We write $\mu'\sqsubset\mu$ if
$\mu'(x_i)=\mu(x_i)$ for $x_i\in M$ and $\mu'(y_i)\in[0,\mu(x_i)-1]$ for
every $y_i\in M'$. We can now define the set of addresses at which one
visits $h\in H$: For $h\in H$, let
\[ A_{\mu,h}=\{a\in A \mid \text{$\mu'(\sigma(a))=h$ for some $\mu'\in\N^{M'}$ with $\mu'\sqsubset\mu$}\}. \]
A subset $C\subseteq A$ is called a \emph{$\mu$-cluster} if
$C\ne\emptyset$ and there is an $h\in H$ such that $C=A_{\mu,h}$.

\begin{lem}\label{separation}
  Let $\nu\in\N^{V_E}$ with $\nu=\mu\oplus\kappa$ for $\mu\in\N^M$ and
  $\kappa\in\N^S$. Moreover, let $h\in H$ and $C=A_{\mu,h}$.
  Then $\tau(\nu(E))(h)=\kappa(E_C)$. 
\end{lem}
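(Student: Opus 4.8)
The plan is to unfold the wreath-product multiplication for $\nu(E) = g_1^{\nu(x_1)} \cdots g_k^{\nu(x_k)} g_{k+1}$ and track precisely which factors contribute a $G$-element at the fixed node $h$. First I would use the basic identity $\tau(g'g'')(h) = \tau(g')(h)\cdot \tau(g'')(\sigma(g')^{-1}h)$ iterated over the $k+1$ factors, which expresses $\tau(\nu(E))(h)$ as an ordered product, over $i \in [1,k+1]$, of terms $\tau(g_i^{\nu(x_i)})(p_i^{-1}h)$, where $p_i = \sigma(g_1)^{\nu(x_1)}\cdots\sigma(g_{i-1})^{\nu(x_{i-1})}$ is the cursor position just before the $i$-th factor (and $g_{k+1}$ contributes $\tau(g_{k+1})(p_{k+1}^{-1}h)$). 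The point of the definitions of $\sigma(a)$ and $A_{\mu,h}$ is exactly to bookkeep when such a term is nontrivial.

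The key case analysis is on each factor $g_i$ according to whether $x_i \in S$ (i.e.\ $\sigma(g_i)=1$) or $x_i \in M$ (i.e.\ $\sigma(g_i)$ has infinite order). For $x_i \in S$, the cursor does not move during this factor, so by Lemma~\ref{compute-powers-torsion} the contribution is $\tau(g_i)(p_i^{-1}h)^{\nu(x_i)}$, which is nontrivial (potentially) exactly when $p_i^{-1}h \in \support(g_i)$; one checks this condition is $\mu'(\sigma(a)) = h$ for $a=(i,p_i^{-1}h)$ with the (unique, since there is no $y_i$ to choose) $\mu' \sqsubset \mu$, and that the associated expression $\tau(a) = \tau(g_i)(p_i^{-1}h)^{x_i}$ evaluated at $\kappa$ is precisely this contribution. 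For $x_i \in M$, I would invoke Lemma~\ref{compute-powers-inf}: the contribution $\tau(g_i^{\nu(x_i)})(p_i^{-1}h)$ is the $\preccurlyeq_{\sigma(g_i)}$-ordered product of $\tau(g_i)(h')$ over $h' \in \support(g_i) \cap \{\sigma(g_i)^{-j}p_i^{-1}h \mid j \in [0,\nu(x_i)-1]\}$. Here each such $h'$ corresponds to an address $a=(i,h')$ with $h' \in \support(g_i)$, witnessed by the choice $\mu'(y_i)=j$ (so $\mu' \sqsubset \mu$ and $\mu'(\sigma(a)) = h$), and $\tau(a)=\tau(g_i)(h')$; distinct $j$ give distinct $h'$ since $\sigma(g_i)$ has infinite order, so there is no double-counting within a factor.

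Putting these together, $\tau(\nu(E))(h)$ is an ordered product, over all addresses $a$ contributing (which is exactly $C = A_{\mu,h}$), of $\kappa(\tau(a))$, and it remains only to check that the order in which these terms appear agrees with the linear order on $A$ defined earlier. Across different $i$ this is immediate since addresses in $A_i$ precede those in $A_j$ for $i<j$; within a single factor $g_i$ with $\sigma(g_i)=1$ there is at most one contributing address so the order is vacuous; within a factor $g_i$ with $\sigma(g_i)$ of infinite order, the order of visits is the $\preccurlyeq_{\sigma(g_i)}$-order on the $h'$'s by Lemma~\ref{compute-powers-inf}, which is compatible with the chosen order on $A_i$ by construction. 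Hence $\tau(\nu(E))(h) = \orderprod[]{a \in C} \kappa(\tau(a)) = \kappa(E_C)$, as desired. The main obstacle is the careful bookkeeping in the infinite-order case — matching the index $j$ ranging over $[0,\nu(x_i)-1]$ with the choices of $\mu'(y_i)$, and verifying that the $\preccurlyeq_{\sigma(g_i)}$-order of actual visits coincides with the order used to define $E_C$ — but this is exactly what Lemma~\ref{compute-powers-inf} and the construction of the order on $A_i$ were set up to deliver.
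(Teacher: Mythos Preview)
Your plan is correct and follows essentially the same approach as the paper's proof: decompose $\tau(\nu(E))(h)$ via the identity $\tau(g'g'')(h)=\tau(g')(h)\cdot\tau(g'')(\sigma(g')^{-1}h)$ into a product over the factors $g_i^{\nu(x_i)}$ and $g_{k+1}$, then for each factor match the contribution against $\orderprod{a\in C\cap A_i}\kappa(\tau(a))$ using \cref{compute-powers-torsion} when $\sigma(g_i)=1$ and \cref{compute-powers-inf} when $\sigma(g_i)$ has infinite order, and finally check that the orders agree. The paper carries out exactly these steps with the same case analysis; the only thing you leave somewhat implicit is the $i=k+1$ case, which the paper treats separately (and which is the easiest case, since there is no power and at most one address in $C\cap A_{k+1}$).
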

\begin{proof}
  
  Recall that for $k_1,k_2\in G\wr H$ and $h\in H$, we have
  $\tau(k_1k_2)(h)=\tau(k_1)(h)\cdot
  \tau(\sigma(k_1)^{-1}h)$. Therefore, we can calculate
  $\tau(\nu(E))(h)$ as
  \[ \tau(\nu(E))(h)= \prod_{i=1}^k \tau\left(g_i^{\nu(x_i)}\right) \left(\sigma(p_{i-1})^{-1}h\right) \cdot \tau(g_{k+1})\left(\sigma(p_{k})^{-1}h\right)  , \]
  where $p_i=g_1^{\nu(x_1)}\cdots g_{i}^{\nu(x_{i})}$ for $i \in [0,k]$.
  On the other hand, by definition of the linear order on $A$, we have
  \begin{multline*}
    \kappa(E_C)=\orderprod{a\in C}\kappa(\tau(a))=\left(\orderprod{a\in C\cap A_{1}} \kappa(\tau(a))\right)
    \cdots  \left(\orderprod{a\in C\cap A_{k}} \kappa(\tau(a))\right) \left(\orderprod{a\in C\cap A_{k+1}} \kappa(\tau(a))\right) .
  \end{multline*}
  Therefore, it suffices to show that 
  \begin{align}
    \tau\left(g_i^{\nu(x_i)}\right)\left(\sigma(p_{i-1})^{-1}h\right) &= \orderprod{a\in C\cap A_{i}} \kappa(\tau(a)) \label{evaluate:valuation:g} \\
    \tau(g_{k+1})\left(\sigma(p_{k})^{-1}h\right) &= \orderprod{a\in C\cap A_{k+1}} \kappa(\tau(a)), \label{evaluate:valuation:h}
  \end{align}
  for $i\in[1,k]$. 

  We begin with \cref{evaluate:valuation:h}. Note that by definition
  of $C=A_{\mu,h}$, if $a\in C\cap A_{k+1}=A_{\mu,h}\cap A_{k+1}$ with
  $a=(k+1,t)$, then there is a $\mu'\in\N^{M'}$ with $\mu'\sqsubset
  \mu$ such that $\mu'(\sigma(a))=h$. Moreover, since $a\in A_{k+1}$,
  $\sigma(a)$ contains only variables in $M$ and thus
  $\mu'(\sigma(a))=\mu(\sigma(a))=\nu(\sigma(a))$.  Note that then
  \begin{align*}
    h=\mu'(\sigma(a))=\nu(\sigma(a))=\nu(\sigma(g_1)^{x_1}\cdots \sigma(g_k)^{x_k} t)
    =\sigma(p_{k})t,
  \end{align*}
  meaning that there is only one such $t$, namely
  $t=\sigma(p_{k})^{-1}h$. 
  Moreover, recall that if
  $a=(k+1,t)$, then $\tau(a) = \tau(g_{k+1})(t)\in G$.  Therefore, the right-hand side of
  \cref{evaluate:valuation:h} is
  \[ \kappa(\tau(a))=\tau(g_{k+1})(t)=\tau(g_{k+1})\left(\sigma(p_{k})^{-1}h\right), \]
  which is the left-hand side of \cref{evaluate:valuation:h}.
  
  It remains to verify \cref{evaluate:valuation:g}.  Let us analyze
  the addresses in $C\cap A_{i}$ for $i \in [1,k]$. Consider $a\in C\cap
  A_{i}=A_{\mu,h}\cap A_{i}$ with $a=(i,t)$. Since $a\in
  A_{\mu,h}$, there is a $\mu'\sqsubset \mu$ with
  $\mu'(\sigma(a))=h$. Since $i \in [1,k]$
  we have
\begin{multline}
  h=\mu'(\sigma(a))= \mu'(\sigma(g_1)^{x_1} \cdots \sigma(g_{i-1})^{x_{i-1}} \sigma(g_i)^{y_i}t) \\
  = \sigma(g_1)^{\nu(x_1)} \cdots \sigma(g_{i-1})^{\nu(x_{i-1})} \sigma(g_i)^{\mu'(y_i)}t
  =\sigma(p_{i-1})\sigma(g_i)^{\mu'(y_i)}t.\label{evaluate:valuation:cluster}
\end{multline}
Here again, if $\sigma(g_j)=1$, we mean that the factor
$\sigma(g_j)^{\nu(x_j)}$ (resp., $\sigma(g_i)^{\mu'(y_i)}$) does not
appear. We now distinguish two cases.

\medskip
\noindent
{\em Case 1.} $\sigma(g_i)=1$. In this case,
    \cref{evaluate:valuation:cluster} tells us that  $h=\sigma(p_{i-1})t$, i.e., 
    $t=\sigma(p_{i-1})^{-1}h$. Thus, $C\cap
    A_{i}=\{(i,\sigma(p_{i-1})^{-1}h)\}$. Moreover, since
    $\sigma(g_i)=1$, $\tau(a)$ is defined as
    $(\tau(g_i)(t))^{x_i}$. Therefore, the right-hand side of
    \cref{evaluate:valuation:g} reads
    \[ (\tau(g_i)(t))^{\kappa(x_i)}= (\tau(g_i)(t))^{\nu(x_i)}=(\tau(g_i^{\nu(x_i)}))(t)=\tau\left(g_i^{\nu(x_i)}\right)\left(\sigma(p_{i-1})^{-1}h\right), \]
    where the second equality is due to
    \cref{compute-powers-torsion}. This is precisely the left-hand
    side of \cref{evaluate:valuation:g}.

\medskip
\noindent
{\em Case 2.} 
$\sigma(g_i)$ has infinite
    order. Let 
    \[ F= \supp(g_i) \cap \{\sigma(g_i)^{-j}\sigma(p_{i-1})^{-1}h \mid j\in[0,\nu(x_i)-1]\} .\]
    We claim that $t \in F$ if and only if $(i,t) \in C$.
    If $(i,t) \in C$  then 
    \Cref{evaluate:valuation:cluster} directly implies that $t \in F$. Conversely, assume that
    $t\in F$ and let
    $t=\sigma(g_i)^{-j}\sigma(p_{i-1})^{-1}h$ for $j\in[0,\nu(x_i)-1]$.
    Then, according to \cref{evaluate:valuation:cluster}, setting
    $\mu'(y_i):=j$ guarantees $\mu'(\sigma(a))=h$ for $a = (i,t)$, i.e., $(i,t)\in C$. 

    Observe that $F$ is
    linearly ordered by $\preccurlyeq_{\sigma(g_i)}$: If $j<j'$, then
    \[ \sigma(g_i)^{-j}\sigma(p_{i-1})^{-1}h =\sigma(g_i)^{j'-j}  \sigma(g_i)^{-j'}\sigma(p_{i-1})^{-1}h . \]
    Therefore, we can compute the right-hand side of \cref{evaluate:valuation:g} as
    \[ \orderprod{a\in C\cap A_i} \kappa(\tau(a))=\orderprod{a\in C\cap A_i} \tau(a)=\orderprod[\preccurlyeq_{\sigma(g_i)}]{t\in F} \tau(g_i)(t). \] 
    According to \cref{compute-powers-inf}, this equals the left-hand
    side of \cref{evaluate:valuation:g}.
\end{proof}

\begin{prop}\label{characterization-solutions}
  Let $\nu\in\N^{V_E}$ with $\nu=\mu\oplus\kappa$ for $\mu\in\N^M$ and
  $\kappa\in\N^S$.  Then $\nu(E)=1$ if and only if
  $\sigma(\nu(E))=1$ and there is a $\kappa$-cancelling profile $P$
  such that every $\mu$-cluster is contained in $P$.
\end{prop}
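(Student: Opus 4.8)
The plan is to prove the two directions of the equivalence separately. For the forward direction, suppose $\nu(E)=1$. Then certainly $\sigma(\nu(E))=1$, since $\sigma$ is a homomorphism. For the profile, I would take $P$ to be the set of all $\mu$-clusters, i.e. $P=\{A_{\mu,h}\mid h\in H,\ A_{\mu,h}\neq\emptyset\}$. Then trivially every $\mu$-cluster is contained in $P$. It remains to check that $P$ is $\kappa$-cancelling. For each $C=A_{\mu,h}\in P$, \cref{separation} gives $\kappa(E_C)=\tau(\nu(E))(h)$, and since $\nu(E)=1$ we have $\tau(\nu(E))=1_{G^{(H)}}$, hence $\tau(\nu(E))(h)=1$; so $C$ is $\kappa$-cancelling. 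Since every $C\in P$ is $\kappa$-cancelling, $P$ is a $\kappa$-cancelling profile.

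For the backward direction, suppose $\sigma(\nu(E))=1$ and there is a $\kappa$-cancelling profile $P$ such that every $\mu$-cluster lies in $P$. An element of $G\wr H$ is trivial iff both its $\sigma$- and $\tau$-components are trivial, so it suffices to show $\tau(\nu(E))=1_{G^{(H)}}$, i.e. $\tau(\nu(E))(h)=1$ for every $h\in H$. Fix $h\in H$ and set $C=A_{\mu,h}$. If $C=\emptyset$, then by \cref{separation} $\tau(\nu(E))(h)=\kappa(E_\emptyset)=1$ (the empty product). If $C\neq\emptyset$, then $C$ is a $\mu$-cluster, hence $C\in P$, hence $C$ is $\kappa$-cancelling, i.e. $\kappa(E_C)=1$; and again by \cref{separation}, $\tau(\nu(E))(h)=\kappa(E_C)=1$. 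Thus $\tau(\nu(E))(h)=1$ for all $h\in H$, so $\tau(\nu(E))=1$, and together with $\sigma(\nu(E))=1$ this gives $\nu(E)=1$.

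The proof is essentially a bookkeeping argument: all the technical work has already been done in \cref{separation}, which converts the value of $\tau(\nu(E))$ at a node $h$ into the group element $\kappa(E_{A_{\mu,h}})$. I do not expect a genuine obstacle here; the only subtlety worth stating explicitly is the distinction between a node $h$ with empty visit-set (handled by the empty-product convention, $E_\emptyset$ evaluating to $1$) and a node that is actually visited (which contributes a $\mu$-cluster and is therefore covered by $P$). One should also note that taking $P$ to be the set of \emph{all} $\mu$-clusters in the forward direction is the natural canonical choice, and that the statement only asserts existence of such a $P$, so no minimality or further structural properties are required.
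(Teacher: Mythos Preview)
Your proof is correct and follows essentially the same approach as the paper: both directions reduce immediately to \cref{separation}, with the forward direction taking $P$ to be the collection of sets $A_{\mu,h}$ and the backward direction treating empty and nonempty $A_{\mu,h}$ separately. The only cosmetic difference is that the paper's choice of $P$ includes all $A_{\mu,h}$ (possibly the empty set), whereas you restrict to the nonempty ones; since $E_\emptyset=1$ is trivially $\kappa$-cancelling, both choices work.
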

\begin{proof}
  Note that $\nu(E)=1$ if and only if $\tau(\nu(E))=1$ and
  $\sigma(\nu(E))=1$.  Therefore, we show that $\tau(\nu(E))=1$ if and
  only if there is a $\kappa$-cancelling profile $P$ such that every
  $\mu$-cluster is contained in $P$.

  First, let suppose that there is a $\kappa$-cancelling profile $P$
  such that every $\mu$-cluster is contained in $P$. We need to show
  that then $\tau(\nu(E))=1$, meaning $\tau(\nu(E))(h)=1$ for every
  $h\in H$. Consider the set $C=A_{\mu,h}$. If $C=\emptyset$, then by
  definition, we have $E_C = 1$. Thus, $\kappa(E_C)=1$,
  which by  \cref{separation}
  implies $\tau(\nu(E))(h)=1$.  If $C\ne\emptyset$, then $C$ is a
  $\mu$-cluster and hence $\kappa$-cancelling. Therefore, by
  \cref{separation}, $\tau(\nu(E))(h)=\kappa(E_C)=1$.  This shows that
  $\tau(\nu(E))=1$.

  Now suppose $\tau(\nu(E))=1$ and let $P\subseteq\Powerset{A}$ be the
  profile consisting of all sets $A_{\mu,h}$ with $h\in H$. Then $P$
  is $\kappa$-cancelling, because if $C\in P$ with $C=A_{\mu,h}$, then
  by \cref{separation}, we have $\kappa(E_C)=\tau(\nu(E))(h)=1$.
\end{proof}

\begin{lem}\label{cancelling-profiles}
  Suppose $\Knapsack(G^\ast)$ is decidable.  Given an instance of
  knapsack for $G\wr H$, we can compute the set of cancelling
  profiles.  If $G$ is knapsack-semilinear, then for each profile $P$,
  the set of $\kappa$ such that $P$ is $\kappa$-cancelling is
  semilinear.
\end{lem}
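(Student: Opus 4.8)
The plan is to reduce both assertions to properties of the finitely many exponent expressions $E_C$ over $G$, where $C$ ranges over subsets of the address set $A$. First I would observe that $A$ is computable from the given instance: from the word representing $g_i$ one extracts $\sigma(g_i)\in H$, the finite support $\supp(g_i)\subseteq H$, and, for each $h\in\supp(g_i)$, the element $\tau(g_i)(h)\in G$, using that both $G$ and $H$ have decidable word problems (the former since $\Knapsack(G)$ is a special case of $\Knapsack(G^\ast)$, the latter since knapsack is decidable for $H$ throughout this section). Hence $\Powerset{A}$, the finite set $\Powerset{\Powerset{A}}$ of all profiles, and for each $C\subseteq A$ the expression $E_C=\prod_{a\in C}\tau(a)$ (an exponent expression over $G$ whose variables all lie in $S$) are effectively computable. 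Note that $E_C$ is in general not a knapsack expression: a variable $x_i$ with $\sigma(g_i)=1$ may occur in several factors $\tau(i,h)$, $h\in\supp(g_i)$.

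For the first assertion, note that $P$ is cancelling precisely when there is a single $\kappa\in\N^S$ with $\kappa(E_C)=1$ for every $C\in P$, i.e.\ when the list $(E_C)_{C\in P}$ is a positive instance of $\ExpEq(G)$. Since $\Knapsack(G^\ast)$ is decidable, so is $\ExpEq(G)$ by \cref{kppowers-vs-expeq}; running this decision procedure on each of the finitely many profiles yields the set of cancelling profiles.

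For the second assertion, assume additionally that $G$ is knapsack-semilinear and fix a profile $P$. Then $\{\kappa\in\N^S\mid P\text{ is }\kappa\text{-cancelling}\}=\bigcap_{C\in P}\{\kappa\in\N^S\mid\kappa(E_C)=1\}$, and as semilinear subsets of $\N^S$ are effectively closed under finite intersection (and under cylindrification from $\N^{V_{E_C}}$), it suffices to show that for a single exponent expression $E'$ over $G$ with variables in $S$ the set $\{\kappa\in\N^S\mid\kappa(E')=1\}$ is effectively semilinear. To circumvent the fact that knapsack-semilinearity is phrased for knapsack expressions, I would rename the occurrences of each variable of $E'$ by fresh, pairwise distinct variables, producing a genuine knapsack expression $\widetilde{E'}$, for which $\Sol(\widetilde{E'})$ is effectively semilinear by hypothesis. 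If $\iota$ denotes the linear substitution re-identifying all copies of each $x_i$ with $x_i$, then $\{\kappa\in\N^S\mid\kappa(E')=1\}=\iota^{-1}(\Sol(\widetilde{E'}))$, which is effectively semilinear because preimages of semilinear sets under linear maps are effectively semilinear.

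The cursor bookkeeping needed to compute $\supp(g_i)$ and the values $\tau(g_i)(h)$, and the care needed to respect the linear order on $A$ when forming $E_C$, are routine. The one genuinely delicate point is the possible repetition of variables inside $E_C$: it is why the first assertion must be reduced to $\ExpEq(G)$ --- whose inputs already allow repeated variables --- rather than to $\Knapsack(G)$, and why the second assertion passes through $\widetilde{E_C}$ and pulls back along $\iota$ instead of applying knapsack-semilinearity directly. Everything else is an immediate appeal to \cref{kppowers-vs-expeq} and to the closure properties of semilinear sets recalled in the preliminaries.
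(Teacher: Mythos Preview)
Your argument is correct and follows the same route as the paper: reduce the first assertion to an instance of $\ExpEq(G)$ (decidable by \cref{kppowers-vs-expeq}), and for the second assertion write the desired set as the finite intersection $\bigcap_{C\in P}\Sol(E_C)$. The one place you are more careful than the paper is in justifying why $\Sol(E_C)$ is semilinear when $E_C$ may repeat variables; the paper simply writes $\Sol(E_C)$ and appeals to closure under Boolean operations, whereas you make the passage through a renamed knapsack expression $\widetilde{E_C}$ and a linear pullback explicit, which is the honest way to derive semilinearity of exponent-expression solution sets from knapsack-semilinearity.
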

\begin{proof}
  A profile $P\subseteq\Powerset{A}$ is $\kappa$-cancelling if and
  only if $\kappa(E_C)=1$ for every $C\in P$. Together, the
  expressions $E_C$ for $C\in P$ constitute an instance of $\ExpEq(G)$
  (and according to \cref{kppowers-vs-expeq}, $\ExpEq(G)$ is decidable if
  $\Knapsack(G^\ast)$ is decidable) and this instance is solvable if
  and only if $P$ is cancelling. This proves the first statement of
  the \lcnamecref{cancelling-profiles}.  The second statement holds
  because the set of $\kappa\in\N^S$ such that $P$ is
  $\kappa$-cancelling is precisely $\bigcap_{C\in P} \Sol(E_C)$ and
  because the class of semilinear sets is closed under Boolean
  operations.
\end{proof}
Let $L_P\subseteq\N^M$ be the set of all $\mu\in\N^M$ such that every
$\mu$-cluster belongs to $P$.
\begin{lem}\label{solutions-presburger}
  Let $H$ be knapsack-semilinear. For every profile
  $P\subseteq\Powerset{A}$, the set $L_P$ is effectively semilinear.
\end{lem}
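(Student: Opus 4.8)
The plan is to write $L_P$ as a Boolean combination of the sets $D_C := \{\mu\in\N^M \mid C\text{ is a }\mu\text{-cluster}\}$, over all $C\subseteq A$, and to prove each $D_C$ effectively semilinear. Since $\mu\in L_P$ precisely when no nonempty $C\notin P$ is a $\mu$-cluster,
\[ L_P \;=\; \N^M \setminus \bigcup_{\emptyset\ne C\subseteq A,\ C\notin P} D_C , \]
and, as semilinear sets are effectively closed under Boolean operations, it suffices to show that $D_C$ is effectively semilinear for each fixed nonempty $C\subseteq A$.

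For $j\in[0,k]$ write $q_j(\mu) := \sigma(g_1)^{\mu(x_1)}\cdots\sigma(g_j)^{\mu(x_j)}\in H$, factors with $\sigma(g_\ell)=1$ being harmless. Unwinding the definition of $\sigma(a)$ one checks, for an address $a=(i,b)$ and $\mu\in\N^M$, that the set $\{\mu'(\sigma(a))\mid\mu'\sqsubset\mu\}$ equals $\{q_{i-1}(\mu)\,\sigma(g_i)^{z}\,b \mid z\in[0,\mu(x_i)-1]\}$ if $x_i\in M$, and equals $\{q_{i-1}(\mu)\,b\}$ if $x_i\notin M$ (including the case $i=k+1$, with $q_k$ in place of $q_{i-1}$); and $a\in A_{\mu,h}$ iff $h$ belongs to this set. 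Fix an arbitrary anchor $a_0=(i_0,b_0)\in C$. Since $C\ne\emptyset$, any $h$ with $A_{\mu,h}=C$ satisfies $a_0\in A_{\mu,h}$ and is therefore of the form $h(\mu,z_0):=q_{i_0-1}(\mu)\,\sigma(g_{i_0})^{z_0}\,b_0$ for some $z_0\in[0,\mu(x_{i_0})-1]$ (when $x_{i_0}\in M$), or $h=q_{i_0-1}(\mu)\,b_0$ (when $x_{i_0}\notin M$, in which case no parameter $z_0$ arises). Hence $\mu\in D_C$ if and only if there is a valid value of $z_0$ with $a\in A_{\mu,h(\mu,z_0)}$ for every $a\in C$ and $a\notin A_{\mu,h(\mu,z_0)}$ for every $a\in A\setminus C$.

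Each atomic condition appearing here defines an effectively semilinear set of parameters $(\mu,z_0)$. First, since $H$ is knapsack-semilinear, every exponent expression over $H$ has an effectively semilinear solution set (rename repeated variables to obtain a knapsack expression, then intersect with the diagonal). Now fix $a=(i,b)\in A$. If $x_i\in M$, then $a\in A_{\mu,h(\mu,z_0)}$ says $q_{i-1}(\mu)\,\sigma(g_i)^{z}\,b=h(\mu,z_0)$ for some $z\in[0,\mu(x_i)-1]$, equivalently $q_{i-1}(\mu)\,\sigma(g_i)^{z}\,b\,b_0^{-1}\,\sigma(g_{i_0})^{-z_0}\,q_{i_0-1}(\mu)^{-1}=1$; expanding each $q_j(\mu)^{\pm1}$ into its product of powers turns the left-hand side into an exponent expression over $H$ whose exponent variables lie among those of $M$ together with $z$ and $z_0$. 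Its solution set is effectively semilinear; viewing it as a set of triples $(\mu,z_0,z)\in\N^M\times\N\times\N$ (coordinates of $M$-variables not occurring being unconstrained), intersecting with the Presburger constraint $0\le z<\mu(x_i)$, and projecting onto the $(\mu,z_0)$-coordinates gives an effectively semilinear set $R_a$ that describes $a\in A_{\mu,h(\mu,z_0)}$; its complement describes $a\notin A_{\mu,h(\mu,z_0)}$. If $x_i\notin M$, the same works without the variable $z$ and without the final projection. Finally, $D_C$ is obtained by intersecting the $R_a$ for $a\in C$ with the complements of the $R_a$ for $a\in A\setminus C$, intersecting with the Presburger constraint $0\le z_0<\mu(x_{i_0})$, and projecting onto $\N^M$ (the last two steps vacuous when $x_{i_0}\notin M$). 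All operations are effective, so $D_C$, and hence $L_P$, is effectively semilinear.

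The main obstacle is the third step: correctly encoding the membership and non-membership conditions for addresses in $A_{\mu,h}$ as exponent expressions over $H$ — choosing an anchor, dealing with the shared prefixes $q_j(\mu)$ and the repeated $M$-variables these create — and correctly interleaving the effective semilinearity provided by $H$ being knapsack-semilinear with the bounded-quantifier Presburger constraints $z\in[0,\mu(x_i)-1]$ and $z_0\in[0,\mu(x_{i_0})-1]$.
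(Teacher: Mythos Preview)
Your proposal is essentially the same as the paper's proof: both reduce to showing that $D_C=\{\mu\mid C\text{ is a }\mu\text{-cluster}\}$ is effectively semilinear, fix an anchor $a_0\in C$, and encode the conditions $a\in A_{\mu,h}$ and $a\notin A_{\mu,h}$ via knapsack expressions over $H$ together with Presburger range constraints on the auxiliary exponents. The paper phrases the construction more abstractly using valuations $\mu'\sqsubset\mu$ and duplicated variable sets $M',\overline{M'}$, while you unwind everything explicitly through the prefixes $q_j(\mu)$; the logical content is the same.

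There is one small technical slip. Your claim that for $a=(i,b)$ with $x_i\in M$ one has $\{\mu'(\sigma(a))\mid \mu'\sqsubset\mu\}=\{q_{i-1}(\mu)\,\sigma(g_i)^{z}\,b\mid z\in[0,\mu(x_i)-1]\}$ is not quite right: the relation $\mu'\sqsubset\mu$ requires $\mu'(y_j)\in[0,\mu(x_j)-1]$ for \emph{every} $x_j\in M$, not just for $j=i$. If $\mu(x_j)=0$ for some $x_j\in M$ (possibly with $\supp(g_j)=\emptyset$, or with all $(j,\cdot)$-addresses outside $C$), then no $\mu'\sqsubset\mu$ exists, $A_{\mu,h}=\emptyset$ for all $h$, and $\mu\notin D_C$; yet your encoded formula may still be satisfiable. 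The paper avoids this by quantifying over the full $\mu'\sqsubset\mu$. The fix on your side is immediate: conjoin the Presburger constraint $\bigwedge_{x_j\in M}\mu(x_j)\ge 1$ to your description of $D_C$.
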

\begin{proof}
  We claim that the fact that every $\mu$-cluster
  belongs to $P$ can be expressed in Presburger arithmetic.
  This implies the \lcnamecref{solutions-presburger}.
    
  In addition to the variables in $M'$, we will use the variables in
  $\overline{M'}=\{\overline{x} \mid x\in M'\}$. For a knapsack
  expression $F=r_0s_1^{z_1}r_1\cdots s_m^{z_m}r_m$ with variables in
  $M'$, let $F^{-1}=r_m^{-1}(s_m^{-1})^{z_m}\cdots
  r_1^{-1}(s_1^{-1})^{z_1}r_0^{-1}$. Moreover, let
  $\overline{F}=r_0s_1^{\overline{z}_1}r_1\cdots
  s_m^{\overline{z}_m}r_m$. For $\mu\in \N^{M'}$, the valuation
  $\overline{\mu}\in\N^{\overline{M'}}$ is defined as
  $\overline{\mu}(\overline{x})=\mu(x)$ for all $x\in
  M'$. Furthermore, for $\mu\in\N^{\overline{M'}}$, we define the
  valuation $\overline{\mu}\in\N^{M'}$ by
  $\overline{\mu}(x)=\mu(\overline{x})$ for $x\in M'$. Thus if
  $\mu\in\N^{M'}$ or $\mu\in\N^{\overline{M'}}$, then
  $\overline{\overline{\mu}}=\mu$.

  As a first step, for each pair $a,b\in A$, we construct a Presburger
  formula $\eta_{a,b}$ with free variables $M'\cup \overline{M'}$ such
  that for $\mu_a\in\N^{M'}$ and $\mu_b\in\N^{\overline{M'}}$, we have
  $\mu_a\oplus\mu_b \models \eta_{a,b}$ if and only if
  $\mu_a(\sigma(a))=\overline{\mu}_b(\sigma(b))$. This is possible
  because $\mu_a(\sigma(a))=\overline{\mu}_b(\sigma(b))$ is equivalent
  to $(\mu_a\oplus\mu_b)(\sigma(a)\overline{\sigma(b)^{-1}})=1$ and the
  solution set of the knapsack expression
  $\sigma(a)\overline{\sigma(b)^{-1}}$ is effectively semilinear by
  assumption.
  
  Next, for each non-empty subset $C\subseteq A$, we construct a
  formula $\gamma_C$ with free variables in $M'$ such that
  $\mu\models\gamma_C$ if and only if $C$ is a $\mu$-cluster. Since
  $C\ne\emptyset$, we can pick a fixed $a\in C$ and let $\gamma_C$
  express the following:
  \begin{equation}
  \begin{aligned}
    \exists \mu'\in\N^{M'} \colon \mu'\sqsubset \mu&~\wedge~ \bigwedge_{b\in C} \left(\exists \mu''\in\N^{\overline{M'}} \colon \overline{\mu''}\sqsubset\mu ~\wedge~ \mu'(\sigma(a))=\overline{\mu''}(\sigma(b))\right) \\ 
    &~\wedge~ \bigwedge_{b\in A\setminus C} \left(\forall \mu''\in\N^{\overline{M'}}\colon \overline{\mu''}\sqsubset\mu ~\to~ \neg \left(\mu'(\sigma(a))=\overline{\mu''}(\sigma(b))\right)\right). \label{presburger-gamma}
  \end{aligned}
  \end{equation}
  Observe that $\mu'\sqsubset \mu$ and $\overline{\mu''}\sqsubset\mu$ are easily expressible in
  Presburger arithmetic. 
  
  Let us show that in fact $\mu\models\gamma_C$ if and only if $C$ is
  a $\mu$-cluster. Consider some $C\subseteq A$ and let $a\in C$ be
  the element picked to define $\gamma_C$.  If $\mu\models\gamma_C$,
  then there is a $\mu'\in\N^{M'}$ with the properties stated in
  \cref{presburger-gamma}. We claim that with $h:=\mu'(\sigma(a))$, we
  have $C=A_{\mu,h}$.  The second of the three conjuncts in
  \cref{presburger-gamma} states that for every $b\in C$ there is a
  $\mu''\in\N^{M'}$ such that $\mu''\sqsubset\mu$ and
  $\mu''(\sigma(b))=\mu'(\sigma(a))=h$.  Thus, $b\in A_{\mu,h}$,
  proving $C\subseteq A_{\mu,h}$.  The third conjunct states that the
  opposite is true for every $b\in A\setminus C$, so that $b\notin
  A_{\mu,h}$ for all $b\in A\setminus C$. In other words, we have
  $A_{\mu,h}\subseteq C$ and thus $A_{\mu,h}=C$.

  Conversely, suppose $C\ne\emptyset$ and $C=A_{\mu,h}$. Let $a\in C$
  be the element chosen to define $\gamma_C$.  Since $a\in A_{\mu,h}$,
  there is a $\mu'\sqsubset \mu$ with $h=\mu'(\sigma(a))$. Moreover,
  for every $b\in C$, there is a $\mu''\sqsubset\mu$ with
  $\mu''(\sigma(b))=h=\mu'(\sigma(a))$. Hence, the second conjunct is
  satsfied.  Furthermore, for every $b\in A\setminus A_{\mu,h}$, there
  is no $\mu''\sqsubset\mu$ with $\mu''(\sigma(b))=h$, meaning that
  the third conjunct is satisfied as well. Hence, $C=A_{\mu,h}$ and
  thus we have $\mu\models\gamma_C$ if and only if $C$ is a
  $\mu$-cluster.
  
  Finally, we get a formula with free variables $M$ that expresses
  that every $\mu$-cluster belongs to $P$ by writing
  $\bigwedge_{C\in\Powerset{A}\setminus P,~C\ne\emptyset} \neg\gamma_C$.
\end{proof}
We are now ready to prove \cref{thm-semilinear1,thm-semilinear2}. Let
$H$ be knapsack-semilinear and let $\Knapsack(G^\ast)$ be decidable. For
each profile $P\subseteq\Powerset{A}$, let $K_P\subseteq\N^S$ be the
set of all $\kappa\in\N^S$ such that $P$ is $\kappa$-cancelling.

Observe that for $\nu=\mu\oplus\kappa$, where $\mu \in\N^M$ and 
$\kappa\in\N^S$, the value of $\sigma(\nu(E))$
only depends on $\mu$. Moreover, the set $T\subseteq\N^M$ of all $\mu$
such that $\sigma(\nu(E))=1$ is effectively semilinear because $H$ is
knapsack-semilinear.  \Cref{characterization-solutions} tells us that
$\Sol(E)=\bigcup_{P\subseteq\Powerset{A}} K_P\oplus (L_P\cap T)$ and
\cref{solutions-presburger} states that $L_P$ is effectively
semilinear. This implies \cref{thm-semilinear1}: We can decide
solvability of $E$ by checking, for each of the finitely many profiles
$P$, whether $K_P\ne\emptyset$ (which is decidable by
\cref{cancelling-profiles}) and whether $L_P\cap
T\ne\emptyset$. Moreover, if $G$ is knapsack-semilinear, then
\cref{cancelling-profiles} tells us that $K_P$ and thus $\Sol(E)$ is
semilinear as well. This proves \cref{thm-semilinear2}.

\section{Complexity: Proof of \cref{thm-NP}} \label{complexity}

Throughout the section we fix a finitely generated group $G$.
The goal of this section is to show that if $G$ is abelian and non-trivial, then $\Knapsack(G \wr \Z)$ is 
$\NP$-complete.

\subsection{Periodic words over groups} \label{sec-periodic}

In this section we define a countable subgroup of $G^\omega$ (the direct product of $\aleph_0$ many
copies of $G$) that consists of all  periodic sequences over $G$. We show that the membership problem
for certain subgroups of this group can be solved in polynomial time if $G$ is abelian.
We believe that this is a result of independent interest which might have other applications. Therefore, we prove the best possible
complexity bound, which is $\TC^0$.\footnote{Alternatively, the reader can always replace $\TC^0$ by polynomial time in the further
arguments.}
This is the class of all problems that can be solved with uniform 
threshold circuits of polynomial size and constant depth. Here, uniformity means DLOGTIME-uniformity,
see e.g. \cite{HeAlBa02} for more details. Complete problems for $\TC^0$ are multiplication and division of binary 
encoded integers (or, more precisely, the question whether a certain bit in the output number is 1) \cite{HeAlBa02}.
$\TC^0$-complete problems in the context of group theory are 
the word problem for any infinite finitely generated solvable linear group \cite{KoeLo17}, the subgroup membership 
problem for finitely generated nilpotent groups \cite{MyasnikovW17}, the conjugacy problem 
for free solvable groups and wreath products of abelian groups \cite{MiasnikovVW17}, and the knapsack problem
for finitely generated abelian groups \cite{LohreyZ17}.

With $G^+$ we denote the set of all tuples $(g_0,\ldots, g_{q-1})$ over $G$ of arbitrary length $q \geq 1$.
With $G^\omega$ we denote the set of all mappings $f : \mathbb{N} \to G$. Elements of $G^\omega$ 
can be seen as infinite sequences (or words) over the set $G$.
We define the binary operation $\circ$ on $G^\omega$  by pointwise 
multiplication: $(f \circ g)(n) = f(n) g(n)$.
In fact, $G^\omega$ together with the multiplication $\circ$ is the direct product of $\aleph_0$ many copies
of $G$. The identity element is the mapping $\id$ with $\id(n)=1$ for all $n \in \mathbb{N}$.
For $f_1, f_2, \ldots, f_n \in G^\omega$ we write $\bigcirc_{i=1}^n f_i$ for $f_1 \circ f_2 \circ \cdots \circ f_n$.
If $G$ is abelian, we write  $\sum_{i=1}^n f_i$ for  $\bigcirc_{i=1}^n f_i$.
A function $f \in G^\omega$ is \emph{periodic with period $q \geq 1$} if $f(k) = f(k+q)$ for all $k \geq 0$.
Note that in this situation, $f$ might also be periodic with a smaller period $q' < q$.
Of course, a periodic function $f$ with period $q$ can be specified by the tuple $(f(0), \ldots, f(q-1))$.
Vice versa, a tuple $u = (g_0, \ldots, g_{q-1}) \in G^+$ defines the periodic function $f_u \in G^\omega$ with
\[ f_u(n \cdot q + r) = g_{r} \text{ for $n \geq 0$ and $0 \leq r < q$}.\]
One can view this mapping as the sequence $u^\omega$ obtained by taking infinitely many repetitions of $u$.
Let $G^\rho$ be the set of all periodic functions from $G^\omega$.
If $f_1$ is periodic with period $q_1$ and $f_2$ is periodic with period $q_2$, then 
$f_1 \circ f_2$ is periodic with period $q_1 q_2$ (in fact, $\mathrm{lcm}(q_1, q_2)$). Hence, $G^\rho$
forms a countable subgroup of $G^\omega$. Note that  $G^\rho$ is not finitely generated: The subgroup
generated by elements $f_i \in G^\rho$ with period $q_i$ ($1 \leq i \leq n$) contains only functions
with period $\mathrm{lcm}(q_1, \ldots, q_n)$.
Nevertheless, using the representation of periodic functions by elements of $G^+$ we can define
the word problem for  $G^\rho$,  \WP$(G^\rho)$ for short:
\begin{description}
\item[Input] Tuples $u_1, \ldots, u_n \in G^+$ (elements of $G$ are represented by finite words over $\Sigma$).
\item[Question] Does $\bigcirc_{i=1}^n f_{u_i}  = \id$ hold?
\end{description}
For $n \geq 0$ we define the subgroup $G^\rho_n$ of all $f \in G^\rho$ with $f(k) = 1$ for all $0 \leq k \leq n-1$.
We also consider the uniform membership problem for subgroups $G^\rho_n$, $\MEM(G^\rho_{\ast})$ for short:
\begin{description}
\item[Input] Tuples $u_1, \ldots, u_n \in G^+$ (elements of $G$ are represented by finite words over $\Sigma$) and a binary encoded number $m$.
\item[Question] Does $\bigcirc_{i=1}^n f_{u_i}$ belong to $G^\rho_m$? 
\end{description}

\begin{lem}
\WP$(G^\rho)$ is $\TC^0$-reducible to $\MEM(G^\rho_{\ast})$
\end{lem}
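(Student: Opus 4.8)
The plan is to exhibit a straightforward many-one reduction. Given a $\WP(G^\rho)$-instance $u_1,\dots,u_n\in G^+$, I would put $q_i=|u_i|\ge 1$, so that $f_{u_i}$ is periodic with period $q_i$, and set $Q=\prod_{i=1}^n q_i$. Since each $q_i$ divides $Q$, every $f_{u_i}$ is also periodic with period $Q$, hence so is $f:=\bigcirc_{i=1}^n f_{u_i}$. A function of period $Q$ equals $\id$ if and only if it takes the value $1$ on the initial segment $\{0,1,\dots,Q-1\}$, i.e.\ if and only if it lies in $G^\rho_Q$: writing $k=aQ+r$ with $0\le r<Q$ gives $f(k)=f(r)$, so vanishing on $[0,Q-1]$ forces $f=\id$, and the converse is trivial. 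Therefore I would output the $\MEM(G^\rho_{\ast})$-instance consisting of the very same tuples $u_1,\dots,u_n$ together with the binary number $m=Q$; by the observation above this instance is positive precisely when $\bigcirc_{i=1}^n f_{u_i}=\id$, so the reduction is correct.

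It then remains to check that this transformation is $\TC^0$-computable. The tuples $u_1,\dots,u_n$ are copied verbatim, so the only real work is producing the binary encoding of $Q=\prod_{i=1}^n q_i$. Each $q_i$ is bounded by the input length $N$ and $n\le N$, so $Q\le N^N$ has $O(N\log N)$ bits and the output has polynomial size. Converting the (unarily given) lengths $q_i$ to binary is trivially in $\TC^0$, and iterated multiplication of binary integers is computable in DLOGTIME-uniform $\TC^0$ by \cite{HeAlBa02}; composing these two steps computes $Q$, and hence the whole reduction, in $\TC^0$.

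I expect the only point requiring care to be the invocation of $\TC^0$-computability of iterated integer multiplication; the rest of the argument is elementary. If one is content with polynomial-time reductions — which the footnote above explicitly permits — even this point disappears, since $Q$ is obviously computable in polynomial time.
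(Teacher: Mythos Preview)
Your proof is correct and follows essentially the same approach as the paper: output the same tuples together with a common period $m$ of $f=\bigcirc_{i=1}^n f_{u_i}$, and use that $f=\id$ iff $f\in G^\rho_m$. The only difference is that the paper takes $m=\mathrm{lcm}(q_1,\dots,q_n)$ whereas you take $m=\prod_{i=1}^n q_i$; your choice has the advantage that the $\TC^0$ bound is immediate via iterated multiplication~\cite{HeAlBa02}, a point the paper's proof leaves implicit.
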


\begin{proof}
Let $u_1, \ldots, u_n \in G^+$ and let $q_i$ be the length of $u_i$. Let $m = \mathrm{lcm}(q_1, \ldots, q_n)$.
We have $\bigcirc_{i=1}^n f_{u_i}  = \id$ if and only $\bigcirc_{i=1}^n f_{u_i}$ belongs to $G^\rho_m$.
\end{proof}

\begin{thm} \label{thm-abelian-membership}
For every finitely generated abelian group $G$, $\MEM(G^\rho_{\ast})$ belongs to $\TC^0$.
\end{thm}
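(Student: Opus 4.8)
The plan is to show that the sequence $f\colon\N\to G$ given by $f=\bigcirc_{i=1}^n f_{u_i}$ satisfies a short linear recurrence, so that membership in $G^\rho_m$ is decided by a polynomially long prefix of $f$, which can then be inspected directly in $\TC^0$.

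Let $q_i$ be the length of $u_i$ and write $u_i=(g_{i,0},\dots,g_{i,q_i-1})$ with $g_{i,j}\in G$. Since $G$ is abelian I would use additive notation, so $f(k)=\sum_{i=1}^n g_{i,\,k\bmod q_i}$, and, viewing $G$ as a $\Z$-module, the identity of formal power series
\[
\sum_{k\ge 0}f(k)\,X^k \;=\; \sum_{i=1}^n\frac{P_i(X)}{1-X^{q_i}},\qquad P_i(X)=\sum_{j=0}^{q_i-1}g_{i,j}\,X^j\in G[X]
\]
holds. (Commutativity of $G$ is essential: for non-abelian $G$ the value $f(k)$ is a genuine product and no such linearization exists.) Multiplying by $\prod_{i=1}^n(1-X^{q_i})\in\Z[X]$ clears all denominators and gives $\bigl(\prod_{i=1}^n(1-X^{q_i})\bigr)\cdot\sum_{k\ge 0}f(k)X^k=Q(X)$ for some $Q(X)=\sum_i P_i(X)\prod_{i'\ne i}(1-X^{q_{i'}})\in G[X]$ with $\deg Q\le D-1$, where $D:=\sum_{i=1}^n q_i$. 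Writing $\prod_{i=1}^n(1-X^{q_i})=\sum_{j=0}^D R_j X^j$ with $R_j\in\Z$ and $R_0=1$, comparing coefficients of $X^n$ for $n\ge D$ yields the forward recurrence $f(n)=-\sum_{j=1}^D R_j\,f(n-j)$ in $G$ for all $n\ge D$.

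From this recurrence, $f(0)=\dots=f(D-1)=1_G$ implies $f(k)=1_G$ for all $k\ge 0$ (induction on $k$, using $R_0=1$). Hence
\[
\bigcirc_{i=1}^n f_{u_i}\in G^\rho_m \quad\Longleftrightarrow\quad f(k)=1_G\text{ for all }k\in[0,\min(m,D)):
\]
if $m\le D$ this is the definition of $G^\rho_m$, and if $m>D$ then triviality of the length-$D$ prefix propagates to all of $\N$. The (possibly exponentially large) coefficients $R_j$ are never computed; the recurrence only bounds the length of the prefix to be inspected. Since $D\le\lvert u_1\rvert+\dots+\lvert u_n\rvert$ is bounded by the input length and $m$ is given in binary, the set $[0,\min(m,D))$ has polynomially many elements, each with polynomially many bits.

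Finally, the remaining test lies in $\TC^0$: for each $k\in[0,\min(m,D))$ in parallel one computes each $k\bmod q_i$ (integer division on polynomially many bits, which is in $\TC^0$), reads off $g_{i,\,k\bmod q_i}$, and evaluates $f(k)=\sum_{i=1}^n g_{i,\,k\bmod q_i}$ in the fixed finitely generated abelian group $G$, testing whether it equals $1_G$; this is an instance of the word problem for $G$ on an input word of polynomial length, which is in $\TC^0$ (for $G\cong\Z^r\times T$ with $T$ finite it is iterated integer addition together with bounded modular arithmetic). Taking the conjunction over the polynomially many values of $k$ stays in $\TC^0$. The only genuine content is the derivation of the recurrence and the resulting ``prefix of length $D$ suffices'' reduction; the main point requiring care is merely that all quantities in play ($D$, the indices $k$, the entries $g_{i,j}$, and $m$) have polynomially many bits, so that the final bit-complexity bookkeeping goes through.
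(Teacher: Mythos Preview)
Your proof is correct and arrives at the same key reduction as the paper: it suffices to check $f(k)=1_G$ for $k<\min(m,D)$ with $D=\sum_i q_i$, after which the $\TC^0$ bookkeeping is routine. The difference lies only in how the underlying linear recurrence of order $D$ is derived. The paper establishes it by a direct inclusion-exclusion telescoping: writing $q_M=\sum_{i\in M}q_i$, it shows by induction on $|M|$ that $\sum_{M\subseteq[1,n]}(-1)^{|M|}f(m-q_M)=0$ for $m\ge D$. Your generating-function argument packages exactly the same identity, since expanding $\prod_i(1-X^{q_i})=\sum_{M}(-1)^{|M|}X^{q_M}$ shows that your coefficients $R_j$ are precisely the alternating sums $\sum_{M:q_M=j}(-1)^{|M|}$; the two recurrences are literally identical. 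Your route has the advantage of making it transparent \emph{why} a recurrence of this order must exist (the generating function is rational with denominator of degree $D$), whereas the paper's hands-on computation is more self-contained and avoids any formal-power-series machinery over $\Z$-modules. Your explicit remark that the possibly exponential coefficients $R_j$ are never computed is a point the paper leaves implicit.
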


\begin{proof} 
Since the word problem for a finitely generated abelian group belongs to $\TC^0$, it suffices to show the following
claim:

\medskip
\noindent
{\em Claim:}  Let $u_1, \ldots, u_n \in G^+$ and let $q_i$ be the length of $u_i$.
Let $f = \sum_{i=1}^n f_{u_i}$.
If there exists a position $m$ such that $f(m) \neq 0$, then there exists a position $m < \sum_{i=1}^n q_i$ 
such that $f(m) \neq 0$.  

\medskip
\noindent
Let $m \geq  \sum_{i=1}^n q_i$. We show that if $f(j)=0$ for all $j$ with $m-\sum_{i=1}^n q_i \leq j < m$, then also $f(m)=0$, 
which proves the above claim.

Hence, let us assume that $f(j)=0$ for all $j$ with $m-\sum_{i=1}^n q_i \leq j < m$.
Note that $f_{u_i}(j)=f_{u_i}(j-q_i)$ for all $j \geq q_i$ and $1 \leq i \leq n$.  
For $M \subseteq [1,n]$ let $q_M= \sum_{i \in M} q_i$. 
Moreover, for $1 \leq k \leq n$ let $\mathcal{M}_k = \{ M \subseteq [1,n], |M|=k \}$.
For all $1 \leq k \leq n-1$ we get
\begin{eqnarray*}
 \sum_{M \in \mathcal{M}_k} \sum_{i \in M} f_{u_i}(m-q_M) &=&  - \sum_{M \in \mathcal{M}_k} \sum_{i \in [1,n] \setminus M} f_{u_i}(m-q_M) \\
 &= & - \sum_{M \in \mathcal{M}_k} \sum_{i \in [1,n] \setminus M} f_{u_i}(m-q_M-q_i) \\
 &= & - \sum_{i=1}^n \sum_{M \in \mathcal{M}_k, i \notin M} f_{u_i}(m-q_{M \cup \{i\}} ) \\
 &= & -\sum_{i=1}^n \sum_{M \in \mathcal{M}_{k+1}, i \in M} f_{u_i}(m-q_M) \\
 &= & - \sum_{M \in \mathcal{M}_{k+1}} \sum_{i \in M} f_{u_i}(m-q_M).
 \end{eqnarray*}
We can write
\[ f(m)=\sum_{i=1}^n f_{u_i}(m)=\sum_{i=1}^n f_{u_i}(m- q_i) =\sum_{M \in \mathcal{M}_1} \sum_{i \in M} f_{u_i}(m-q_M).\]
From the above identities we get by  induction:
\begin{eqnarray*}
f(m) &=& (-1)^{n+1}\sum_{M \in \mathcal{M}_n} \sum_{i \in M} f_{u_i}(m-q_M) \\
&=& (-1)^{n+1} \sum_{i \in [1,n]} f_{u_i}(m-q_{[1,n]}) \\
&=& (-1)^{n+1} f(m-\sum_{i=1}^n q_i)=0.
 \end{eqnarray*}
This proves the claim and hence the theorem.
\end{proof}

\subsection{Automata for Cayley representations}

The goal of this section is to show that if $\ExpEq(G)$ and $\MEM(G^\rho_{\ast})$ both belong to $\NP$,
then also $\mathrm{KP}(G \wr \Z)$ belongs to $\NP$.

An interval $[a,b] \subseteq \Z$ {\em supports} an element $(f,d) \in G \wr \Z$
if $\{0,d\} \cup \supp(f) \subseteq [a,b]$.
If $(f,d) \in G \wr \Z$ is a product of length $n$ over the generators,
then the minimal interval $[a,b]$ which supports $(f,d)$ satisfies $b-a \le n$.
A knapsack expression $E = v_0 u_1^{x_1} v_1 \cdots u_k^{x_k} v_k$ is called {\em rigid}
if each $u_i$ evaluates to an element $(f_i,0) \in G \wr \Z$.
Intuitively, the movement of the cursor is independent from the values of the variables $x_i$ up to repetition of loops.
In particular, every variable-free expression is rigid.

In the following we define so called Cayley representations of rigid knapsack expressions.
This is a finite word, where every symbol is a marked knapsack expression over 
$G$. A marked knapsack expression over $G$ is of the form $E$, $\overline{E}$, $\underline{E}$, 
or $\overline{\underline{E}}$, where $E$ is 
a knapsack expression over $G$. We say that $\overline{E}$ and $\overline{\underline{E}}$
(resp., $\underline{E}$ and $\overline{\underline{E}}$) are top-marked (resp., bottom-marked). 

Let $E = v_0 u_1^{x_1} v_1 \cdots u_k^{x_k} v_k$ be a rigid knapsack expression over $G \wr \mathbb{Z}$.
For an assignment $\nu$ let $(f_\nu,d) \in G \wr \Z$ be the element to which $\nu(E)$ evaluates, i.e. $(f_\nu, d)=\nu(E)$.
Note that $d$ does not depend on $\nu$.
Because of the rigidity of $E$, there is an interval  $[a,b] \subseteq \Z$ that supports $(f_\nu,d)$ for all assignments $\nu$.
For each $j \in [a,b]$ let $E_j$ be a knapsack expression over $G$ with the variables $x_1, \dots, x_k$
such that $f_\nu(j) = \nu(E_j)$ for all assignments $\nu$.
Then we call the formal expression
\[
	r = \begin{cases}
	E_a \, E_{a+1} \, \cdots \, E_{-1} \, \overline{E_0} \, E_1 \, \cdots \, E_{d-1} \, \underline{E_d} \, E_{d+1} \, \cdots \, E_b & \text{ if } d > 0 \\
	E_a \, E_{a+1} \, \cdots \, E_{-1} \, \overline{\underline{E_0}} \, E_1 \, \cdots \, E_b & \text{ if }  d = 0 \\
	E_a \, E_{a+1} \, \cdots \, E_{d-1} \, \underline{E_d} \, E_{d+1} \, \cdots \, E_{-1} \, \overline{E_0} \, E_1 \, \cdots \, E_b & \text{ if } d < 0
	\end{cases}	.
\]
a {\em Cayley representation} of $E$ (or $E$ is {\em represented} by $r$).
Formally, a Cayley representation is a sequence of marked knapsack expressions.
For a Cayley representation $r$, we denote by $|r|$ the number of knapsack expressions in the sequence.
%Let us write $M(r)$ for the set of marked knapsack expressions that occur in $r$.
If necessary, we separate consecutive marked knapsack expressions in
$r$ by commas. For instance, if $a_1$ and $a_2$ are generators of $G$, then
$\overline{a_1}, a_2 a_1, \underline{a_2}$ is a Cayley representation
of length 3, whereas $\overline{a_1}, a_2, a_1, \underline{a_2}$ is a
Cayley representation of length 4.  By this definition, $r$ depends on
the chosen supporting interval $[a,b]$. However, compared to the
representation of the minimal supporting interval, any other Cayley
representation differs only by adding $1$'s (i.e., trivial knapsack
expressions over $G$) at the left and right end of $r$.

A Cayley representation of $E$ records for each point in $\Z$ an
expression that describes which element will be placed at that
point. Multiplying an element of $G\wr\Z$ always begins at a
particular cursor position; in a Cayley representation, the marker on
top specifies the expression that is placed at the cursor position in
the beginning. Moreover, a Cayley representation describes how the
cursor changes when multiplying $\nu(E)$: The marker on the bottom
specifies where the cursor is located in the end.

%If $\mathcal{E} = \{ E_a, \dots, E_b \}$ is the set of occurring expressions,
%we view $r$ as a finite word over $M(\mathcal{E})$
%where $M(\mathcal{E}) = \{ A, \overline{A}, \underline{A}, \overline{\underline{A}} \mid A \in \mathcal{E}\}$
%is the set of {\em marked expressions} over $\mathcal{E}$.
\begin{ex} \label{ex-representation}
Let us consider the wreath product $F_2 \wr \Z$ where $F_2$ is the free group generated by $\{a,b\}$
and $\Z$ is generated by $t$.
Consider the rigid knapsack expression $E = u_1^x u_{2}^{} u_3^y u_4^5$ where
\begin{itemize}
	\item $u_1 = a t^{-1} a t^2 b t^{-1}$, represented by $a \, \overline{\underline{a}} \, b$,
	\item $u_2 = t$, represented by $\overline{1} \, \underline{1}$,
	\item $u_3 = b t b t b t^{-2}$, represented by $\overline{\underline{b}} \, b \, b$,
	\item %$u_4 = b t^{-1} b t b t b^3 t b^3 t b^5 t^{-1} b$, 
	$u_4 = a t^{-1} b t^2 b^{-1} t a t a t^{-1}$,
	represented by $b \, \overline{a} \, b^{-1} \, \underline{a} \, a$.
\end{itemize}
A Cayley representation of $u_1^x$ is $a^x \, \overline{\underline{a^x}} \, b^{-1}$
and a Cayley representation of $u_3^y$ is $\overline{\underline{b^y}} \, b^y \, b^y$.
The diagram in \cref{fig:cayley-rep} illustrates how to compute a Cayley representation $r$ of $E$, which is shown
in the bottom line. Here, we have chosen the supporting interval minimal. Note that if we replace the exponents
$5$ in $u_4^5$ by a larger number, then we only increase the number of repetitions of the factor $a, a^2$ in the 
Cayley representation. 
\end{ex}

\begin{figure}
\begin{center}
\begin{tabular}{|c|c|c|c|c|c|c|c|c|c|c|c|c|c|c|c|}
-1 & 0  & 1  & 2  & 3 & 4 & 5 & 6 & 7 & 8 & 9 & 10 & 11 & 12  \\ \thickhline
$a^x$ & $\overline{\underline{a^x}}$ & $b^x$ &&&&&&&&&&& \\
& $\overline{1}$ & $\underline{1}$ &&&&&&&&&&& \\
& & $\overline{\underline{b^y}}$ & $b^y$ & $b^y$ &&&&&&&&& \\
& $b$ & $\overline{a}$ & $b^{-1}$ & $\underline{a}$ & $a$ &&&&&&&& \\
& & & $b$ & $\overline{a}$ & $b^{-1}$ & $\underline{a}$ & $a$ &&&&&& \\
& & & & & $b$ & $\overline{a}$ & $b^{-1}$ & $\underline{a}$ & $a$ &&&& \\
& & & & & & & $b$ & $\overline{a}$ & $b^{-1}$ & $\underline{a}$ & $a$ && \\
& & & & & & & & & $b$ & $\overline{a}$ & $b^{-1}$ & $\underline{a}$ & $a$ \\ \hline
$a^x$ & $\overline{a^xb}$ & $b^x b^y a$ & $b^y$ & $b^y a^2$ & $a$ & $a^2$ & $a$ & $a^2$ & $a$ & $a^2$ & $ab^{-1}$ & $\underline{a}$ & $a$
\end{tabular}
\end{center}
\caption{Cayley representation}
\label{fig:cayley-rep}
\end{figure}

\cref{ex-representation} also illustrates the concept of so called consistent tuples,
which will be used later.
A tuple $(\gamma_1, \dots, \gamma_n)$, where every $\gamma_i$ is a marked knapsack expression over $G$
is {\em consistent} if, whenever $\gamma_i$ is bottom-marked and $i<n$, then  $\gamma_{i+1}$ is top-marked.
Every column in \cref{fig:cayley-rep} is a consistent tuple.

Let $E$ be an arbitrary knapsack expression over $G \wr \Z$. We can assume that
$E$ has the form $u_1^{x_1} \cdots u_k^{x_k} u_{k+1}$.
We partition the set of variables $X = \{x_1, \dots, x_k\}$ as $X=X_0 \cup X_1$,
where $X_0$ contains all variables $x_i$ where $u_i$ evaluates to an element $(f,0) \in G \wr \Z$,
and $X_1$ contains all other variables. For a partial assignment $\nu \colon X_1 \to \N$  we obtain
a rigid knapsack expression $E_\nu$ by replacing in $E$ every variable $x_i \in X_1$ by
$\nu(x_i)$.
A set $R$ of Cayley representations is a {\em set representation} of $E$ if
\begin{itemize}
	\item for each assignment $\nu \colon X_1 \to \N$ there exists $r \in R$
	such that $r$ represents $E_\nu$,
	\item for each $r \in R$ there exists an assignment $\nu \colon X_1 \to \N$
	such that $r$ represents $E_\nu$ and $\nu(x) \le |r|$ for all $x \in X_1$.
\end{itemize}

\begin{ex} \label{ex-representation2}
Let us consider again the wreath product $F_2 \wr \Z$
and consider the (non-rigid) knapsack expression $E' = u_1^x u_{2}^{} u_3^y u_4^z$ where
$u_1, u_2, u_3, u_4$ are taken from \cref{ex-representation}. We have $X_0 = \{ x,y \}$ and $X_1 = \{ z \}$.
For $z=5$ we obtained in \cref{ex-representation} the Cayley representation 
\[
a^x, \overline{a^xb}, b^x b^y a, b^y, b^y a^2, a, a^2, a, a^2, a, a^2, ab^{-1}, \underline{a}, a .
\]
A set representation $R$ of $E'$ consists of the following Cayley representations:
\begin{itemize}
\item $a^x, \overline{a^x}, \underline{b^x b^y}, b^y, b^y$ for $\nu(z) = 0$,
\item $a^x, \overline{a^xb}, b^x b^y a, b^y b^{-1}, \underline{b^y a}, a$ for $\nu(z)=1$,
\item  $a^x, \overline{a^xb}, b^x b^y a, b^y, b^y a^2, \underbrace{a, a^2, \ldots, a, a^2}_{\text{$\nu(z)-2$ times}}, a b^{-1}, \underline{a}, a$ for $\nu(z) \geq 2$.
\end{itemize}
Only finitely many different marked knapsack expressions appear in this set representation $R$, and 
$R$ is clearly a regular language over the finite alphabet consisting of this finitely many marked knapsack expressions.
\end{ex}
In the following, we will show that for every knapsack expression $E=u_1^{x_1} \cdots u_k^{x_k} u_{k+1}$  there exists a 
non-deterministic finite automaton (NFA)
that accepts a set representation of $E$, whose size is exponential in $n = |E|$. 
First, we consider the blocks $u_1^{x_1}, \ldots, u_k^{x_k}, u_{k+1}$.

\begin{lem} \label{lemma-A_i} One can compute in polynomial time for
  each $1 \le i \le k+1$ an NFA $\mathcal{A}_i$ of size $|u_i|^{O(1)}$
  that recognizes a set representation of $u_i^{x_i}$ or $u_{k+1}$.
\end{lem}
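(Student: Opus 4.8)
The plan is to handle the two cases $u_{k+1}$ (variable-free) and $u_i^{x_i}$ (a single looped block) separately, and in the looped case to further distinguish whether $u_i$ evaluates to an element $(f_i,0)\in G\wr\Z$ (i.e.\ $x_i\in X_0$) or to an element with nonzero cursor displacement (i.e.\ $x_i\in X_1$).

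First, the easy cases. If $i=k+1$, then $u_{k+1}$ evaluates to a fixed element $(f,d)\in G\wr\Z$; we compute a minimal supporting interval $[a,b]$ (of length $\le|u_{k+1}|$) by scanning the word $u_{k+1}$ over the generators, and for each $j\in[a,b]$ we read off the variable-free knapsack expression $E_j$ over $G$ obtained by collecting, in left-to-right order, the $G$-generators deposited at position $j$ during the traversal of $u_{k+1}$. The Cayley representation is then a single word of length $b-a+1=|u_{k+1}|^{O(1)}$, and $\mathcal A_{k+1}$ is the trivial NFA accepting exactly this word. Similarly, if $x_i\in X_0$, then $u_i$ has cursor displacement $0$, so $u_i^{x_i}$ is rigid with a supporting interval $[a,b]$ independent of $x_i$; for each $j\in[a,b]$ the element deposited at $j$ after $\nu(x_i)$ iterations is $\nu(E_j)$ where $E_j = (w_j)^{x_i}$ and $w_j$ is the (fixed, computable) word over $\Sigma$ that $u_i$ deposits at $j$ in one pass. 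Again $\mathcal A_i$ accepts a single word, marked at position $0$ on top and at the displacement position $d_i=0$ on the bottom (so position $0$ carries $\overline{\underline{E_0}}$).

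The substantive case is $x_i\in X_1$, where $u_i$ evaluates to $(f_i,d_i)$ with $d_i\ne 0$; assume $d_i>0$ (the case $d_i<0$ is symmetric). Here the cursor marches off by $d_i$ each iteration, so after $\nu(x_i)$ iterations the footprint has length roughly $\nu(x_i)\cdot d_i$ and grows with $\nu(x_i)$; this is exactly the situation in \cref{fig:cayley-rep}, where the overlapping translated copies of $u_i$'s deposit pattern produce an eventually periodic word of marked knapsack expressions. Concretely, fix one pass of $u_i$ starting at cursor $0$: it deposits words $w_0,\ldots,w_{m}$ at positions $0,\ldots,m$ where $m\le|u_i|$, and ends at cursor $d_i$. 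When we iterate, the deposit at a global position $p$ is the product (in the correct order) of the contributions from all iterations whose window covers $p$; since consecutive windows are shifted by $d_i$, position $p$ (for $p$ in the "interior", i.e.\ $m\le p\le (\nu(x_i)-1)d_i$) receives a product of the form $w_{r}\,w_{r+d_i}\,w_{r+2d_i}\cdots$ restricted to the indices $\le m$, where $r=p\bmod d_i$ — a fixed word depending only on $r$, not on $p$. Thus the Cayley representation of $u_i^{\nu(x_i)}$ is: a fixed prefix of length $O(|u_i|)$ (the left boundary effects of the first few iterations), then $(\nu(x_i)-c)$ repetitions of a fixed block of length $d_i$ (one symbol per residue class mod $d_i$), then a fixed suffix of length $O(|u_i|)$ (the right boundary effects), with the top marker in the prefix and the bottom marker in the suffix. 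An NFA of size $|u_i|^{O(1)}$ is built to accept exactly prefix $\cdot$ (block)$^*$ $\cdot$ suffix: one path through the prefix, a loop of length $d_i$ for the block, and a path through the suffix. By construction every word in the language is a Cayley representation of $E_\nu$ for $\nu(x_i)$ equal to $c$ plus the number of loop traversals, and that value of $\nu(x_i)$ is at most the length of the accepted word; conversely for every $\nu(x_i)\in\N$ some accepted word represents $E_\nu$ (when $\nu(x_i)<c$ we additionally precompute the finitely many short Cayley representations explicitly and add them as extra branches). All the deposited words $w_r\,w_{r+d_i}\cdots$ have length $O(|u_i|)$ and are computed in polynomial time by simply replaying one pass of $u_i$.

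The main obstacle is the bookkeeping in the $x_i\in X_1$ case: one must verify that the "interior" deposit at position $p$ genuinely depends only on $p\bmod d_i$ and on the fixed one-pass deposit pattern of $u_i$, that the left- and right-boundary transients have length bounded by $O(|u_i|)$ and stabilize after a constant number $c=O(|u_i|/d_i)$ of iterations, and that the single top marker and single bottom marker land in the prefix and suffix respectively so that each accepted word is a syntactically valid Cayley representation (exactly one top-marked and one bottom-marked symbol, in the correct relative order). Once this periodic structure is pinned down, both the NFA construction and the polynomial size bound $|u_i|^{O(1)}$ are immediate.
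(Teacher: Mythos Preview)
Your proposal is correct in essence but takes a different route from the paper in the key case $x_i\in X_1$. The paper does not argue eventual periodicity directly. Instead it builds an automaton $\mathcal{B}$ whose states are the decreasing arithmetic progressions $(s_0,s_0-d,\ldots,s_0-\ell d)$ contained in the supporting interval $[a,b]$; such a state records, at the current horizontal position, the relative offsets of all currently overlapping translates of the one-pass deposit pattern. A transition advances every offset by~$1$, drops an offset that has reached $b$, and (via an $\varepsilon$-move) spawns a new offset $a$ whenever the smallest one equals $a+d$. The output symbol at a state is the ordered product of the one-pass entries at those offsets, with top/bottom marks inherited from the first and last entries. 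This yields $O((b-a)^2)$ states without any case split on the size of $\nu(x_i)$, and---importantly for what follows in the paper---the boundedness of the accepted language drops out immediately from the shape of the transition relation (each strongly connected component is a single directed cycle).

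Your periodicity argument reaches the same destination by a more elementary path, and in fact the paper's automaton ultimately has exactly the prefix--cycle--suffix structure you describe. Two small points to tighten: first, the one-pass support is in general an interval $[a,b]$ with $a\le 0$ rather than $[0,m]$, since the cursor may move left of the origin before moving right (as in the paper's example $u_4=at^{-1}bt^2b^{-1}tatat^{-1}$); second, at an interior position the earlier iterations contribute the larger offsets, so the product is over indices congruent to $p\bmod d$ in \emph{decreasing} order and your displayed formula $w_r\,w_{r+d}\,w_{r+2d}\cdots$ should be reversed. Neither point affects the validity of the plan.
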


\begin{proof}
Let us do a case distinction.

\medskip
\noindent
{\em Case 1.}
	Consider an expression $u_i^{x_i}$ where $x_i \in X_0$,
	i.e. $u_i$ evaluates to some element $(f,0) \in G \wr \Z$.
	Let $[a,b]$ be the minimal interval which supports $(f,0)$. Thus, $b-a \leq |u_i|$.
	Then
	\[
		r_i = f(a)^{x_i} \cdots f(-1)^{x_i} \overline{\underline{f(0)^{x_i}}}  f(1)^{x_i} \cdots f(b)^{x_i}
	\]
	is a Cayley representation of $u_i^{x_i}$ where $|r_i| = b-a+1 \le |u_i| + 1$.
	Clearly, $\{r_i\}$ is a set representation of $u_i^{x_i}$,
	which is recognized by an NFA $\mathcal{A}_i$ of size $|r_i|+1 \le |u_i|+2$.
	
\medskip
\noindent
{\em Case 2.}	
	Similarly, for the word $u_{k+1}$ we obtain a Cayley representation $r_{k+1}$ as above except
	that the exponents $x_i$ are not present.
	Again, $\{r_{k+1}\}$ is a set representation of $u_{k+1}$,
	which is recognized by an NFA $\mathcal{A}_{k+1}$ of size $|u_{k+1}|+2$.

\medskip
\noindent
{\em Case 3.}	
Consider an expression $u_i^{x_i}$ where $x_i \in X_1$,
	i.e., $u_i$ evaluates to some element $(f,d) \in G \wr \Z$ where $d \neq 0$.
	Let $[a,b]$ be a minimal interval which supports $(f,d)$, hence $b-a \le |u_i|$.
	
	We only consider the case $d > 0$; at the end we say how to modify the construction for $d < 0$.
	Consider the word
	\[
		r_i = f(a)  \cdots f(-1) \overline{f(0)} \cdots \underline{f(d)} f(1) \cdots  f(b),
	\]
	which is a Cayley representation of $(f,d)$.
	We will prove that there is an NFA $\mathcal{A}_i$ with $\varepsilon$-transitions of size $O(|r_i|^2) = O(|u_i|^2)$
	which recognizes a set representation of $u_i^{x_i}$. This set representation has to contain
	a Cayley representation of every $u_i^m$ (a variable-free knapsack expression over $G$) for $m \geq 0$.

	First we define an auxiliary automaton $\mathcal{B}$. \cref{ex-run} shows an example of the following 
	construction.
	Let $\Gamma$ be the alphabet of $r_i$ (a set of possibly marked elements of $G$)
	and define $g \colon [a,b] \to \Gamma$ by
	\[
		g(c) = \begin{cases}
			\overline{f(0)} & \text{if $c = 0$} \\
			\underline{f(d)} & \text{if $c = d$} \\
			f(c) & \text{otherwise.}
		\end{cases}
	\]
	The state set of $\mathcal{B}$ is the set $Q$ of all decreasing arithmetic progressions
	$(s, s - d, s - 2d, \dots, s - \ell d)$ in the interval $[a,b]$ where $\ell \ge 0$
	together with a unique final state $\top$.
	It is not hard to see that $|Q| = O(|r_i|^2)$.
	For each state $(s_0, \dots, s_\ell) \in Q$ we define the marked $G$-element
	\[
	\alpha(s_0, \dots, s_\ell) =  \begin{cases}
	  f(s_0) \cdots f(s_\ell)  & \text{if neither $g(s_0)$ is top-marked nor $g(s_\ell)$ is bottom-marked} \\[1mm]
	  \overline{f(s_0) \cdots f(s_\ell)}  & \text{if $g(s_0)$ is top-marked} \\[1mm]
	 \underline{f(s_0) \cdots f(s_\ell)}  & \text{if $g(s_\ell)$ is bottom-marked} 
	\end{cases}
	\]
	Since $d > 0$ it cannot happen that $g(s_0)$ is top-marked and at the same time $g(s_\ell)$ is bottom-marked.
	The initial state is the 1-tuple $(a)$.
	For each state $(s_0, \dots, s_\ell) \in Q$ and $\gamma = \alpha(s_0, \dots, s_\ell)$
	%$\gamma = (g(s_0), \dots, g(s_\ell))$
	the automaton has the following transitions:
	\begin{itemize}
		\item $(s_0, \dots, s_\ell) \xrightarrow{\varepsilon} (s_0, \dots, s_\ell,a)$ if $s_\ell = a+d$
		\item $(s_0, \dots, s_\ell) \xrightarrow{\gamma} (s_0+1, \dots, s_\ell+1)$ if $s_0 < b$
		\item $(s_0, \dots, s_\ell) \xrightarrow{\gamma} (s_1+1, \dots, s_{\ell}+1)$ if $s_0 = b$ and $\ell \ge 1$
		\item $(s_0, \dots, s_\ell) \xrightarrow{\gamma} \top$ if $s_0 = b$ and $\ell = 0$
	\end{itemize}
	%We project each consistent tuple $(g_0, \dots, g_\ell) \in \Gamma^{\ell+1}$ in the alphabet of $\mathcal{B}$
	%to the reversed product of the unmarked elements $g_\ell \cdot \cdots \cdot g_0$ which inherits the top (bottom) markers
	%of $g_0$ (and $g_\ell$).
	Finally we take the union with another automaton which accepts the singleton $\{ \overline{\underline{1}} \}$.
	This yields the desired automaton $\mathcal{A}_i$.
	
	If $d<0$ we can consider the group element
	$(f',-d)$ with $f' \colon [-b,-a] \to G$, $f'(c) = f(-c)$ for $-b \leq c \leq -a$. 
	We then do the above automaton construction for $(f',-d)$. From the resulting NFA
	we finally construct an automaton for the reversed language.
This proves the lemma.
\end{proof}

\begin{ex} \label{ex-run}
Below is a run of the automaton for $(a t^{-1} b t^2 b^{-1} t a t a t^{-1})^x$ on  the word
\[ b,\overline{a},1,(a^2,a)^3,a^2,ab^{-1},\underline{a},a.
\]
\cref{fig:cayley-rep2} shows how this
word is produced from $(a t^{-1} b t^2 b^{-1} t a t a t^{-1})^5$. The last line shows the tuple of relative
positions in the currently  ``active'' copies of $b,a,b^{-1},a,a$. The positions are $-1,0,1,2,3$. For instance,
the tuple $(3,1,-1)$ means that currently three copies of $b,a,b^{-1},a,a$ are active. The current position
in the first copy is 3, the current position in the second copy is 1, and the current position in the third copy is -1.
These tuples are states in the run below. The only additional states $(1)$ and $(3,1)$ in the run are origins of $\varepsilon$-transitions,
which add new copies of $b,a,b^{-1},a,a$.
\begin{align*}
& (-1) \xrightarrow{b} (0) \xrightarrow{\overline{a}} (1) \xrightarrow{\varepsilon} (1,-1) \xrightarrow{1} \\
& (2,0)  \xrightarrow{a^2} (3,1)  \xrightarrow{\varepsilon}  (3,1,-1)  \xrightarrow{a} \\
& (2,0)  \xrightarrow{a^2} (3,1)  \xrightarrow{\varepsilon}  (3,1,-1)  \xrightarrow{a} \\
& (2,0)  \xrightarrow{a^2} (3,1)  \xrightarrow{\varepsilon}  (3,1,-1)  \xrightarrow{a} \\
& (2,0)  \xrightarrow{a^2} (3,1) \xrightarrow{ab^{-1}} (2)   \xrightarrow{\underline{a}} (3)  \xrightarrow{a} \top
\end{align*}
\end{ex}
\begin{figure}
\begin{center}
\begin{tabular}{|c|c|c|c|c|c|c|c|c|c|c|c|c|c|c|c|}
 \hline
 $b$ & $\overline{a}$ & $b^{-1}$ & $\underline{a}$ & $a$ &&&&&&&& \\
 & & $b$ & $\overline{a}$ & $b^{-1}$ & $\underline{a}$ & $a$ &&&&&& \\
 & & & & $b$ & $\overline{a}$ & $b^{-1}$ & $\underline{a}$ & $a$ &&&& \\
 & & & & & & $b$ & $\overline{a}$ & $b^{-1}$ & $\underline{a}$ & $a$ && \\
 & & & & & & & & $b$ & $\overline{a}$ & $b^{-1}$ & $\underline{a}$ & $a$  \\ \hline
  $b$ & $\overline{a}$ & $1$ & $a^2$ & $a$ & $a^2$ & $a$ & $a^2$ & $a$ & $a^2$ & $ab^{-1}$ & $\underline{a}$ & $a$  \\ \hline
  (-1)  & (0)  & (1,-1) & (2,0) & (3,1,-1) & (2,0) & (3,1,-1) & (2,0) & (3,1,-1) & (2,0) &(3,1) & (2) & (3) 
\end{tabular}
%  $(-1)$  & $(0)$  & $(1,-1)$ & $(2,0)$ & $(3,1,-1)$ & $(2,0)$ & $(3,1,-1)$ & $(2,0)$ & $(3,1,-1)$ & $(2,0)$ & $(3,1)$ & $(2)$ & $(3)$
\end{center}
\caption{A run of the automaton for $(a t^{-1} b t^2 b^{-1} t a t a t^{-1})^x$}
\label{fig:cayley-rep2}
\end{figure}
A language $L \subseteq \Sigma^*$ is {\em bounded} if there exist
words $\beta_1, \dots, \beta_n \in \Sigma^*$ such that $L \subseteq
\beta_1^* \cdots \beta_n^*$.  It will be convenient to use the
following characterization. For states $p,q$ of an automaton
$\mathcal{B}$, let $L_{p,q}(\mathcal{B})$ be the set of all words read
on a path from $p$ to $q$. An NFA $\mathcal{B}$ recognizes a bounded
language if and only if for every state $q$, the language
$L_{q,q}(\mathcal{B})$ is commutative, meaning that 
$uv=vu$ for any $u,v\in L_{q,q}(\mathcal{B})$~\cite{GaKrRaSh2010}.

\begin{lem}\label{construct-bounded-words}
  Given an NFA $\mathcal{B}$ that recognizes a bounded language, one
  can compute in polynomial time words $\beta_1,\ldots,\beta_n$ with
  $L(\mathcal{B})\subseteq\beta_1^*\cdots\beta_n^*$.
\end{lem}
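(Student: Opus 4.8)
The plan is to reduce to the structure of the strongly connected components (SCCs) of $\mathcal{B}$: within a bounded NFA each SCC has essentially a single "primitive period", and the $\beta_i$ can be produced by listing these periods together with a polynomial number of fixed connecting and bridging words, in topological order. First I would preprocess $\mathcal{B}$ by removing every state that is not reachable from an initial state or cannot reach a final state; this is polynomial and does not change $L(\mathcal{B})$. After this, every state lies on an accepting run, so the cited characterisation applies in its "only if" direction: since $L(\mathcal{B})$ is bounded, $L_{q,q}(\mathcal{B})$ is commutative for every state $q$. Then I compute the SCCs $C_1,\dots,C_\ell$ and a topological order $C_1\prec\cdots\prec C_\ell$ of the condensation (every edge between distinct SCCs goes from a smaller to a larger one). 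For each $C$ that contains a cycle, I fix a state $q_C\in C$, compute via breadth-first search a shortest cycle word $c_C$ at $q_C$ inside $C$, and let $p_C$ be its primitive root; and for every ordered pair $(e,f)$ of states of $C$ I compute a shortest path word $w_{e,f}$ from $e$ to $f$ inside $C$. All of this is polynomial, and $|p_C|\le|c_C|\le|C|$.

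The structural core is the following claim about $L^C_{e,f}$, the set of words read on paths from $e$ to $f$ staying inside $C$: for every SCC $C$ with a cycle, $L^C_{e,f}\subseteq w_{e,f}\,\widehat p_{e,f}^{\;*}$, where $\widehat p_{e,f}$ is a cyclic rotation of $p_C$ that is determined by $(e,f)$ and computable in polynomial time (concretely, the rotation of $p_C$ by $-|\sigma_f|\bmod |p_C|$ positions, where $\sigma_f$ is a fixed shortest path word from $f$ to $q_C$ inside $C$). To prove it, pick any path words $\rho$ from $q_C$ to $e$ and $\sigma$ from $f$ to $q_C$ inside $C$; then for $w\in L^C_{e,f}$ both $\rho w\sigma$ and $\rho w_{e,f}\sigma$ are cycles at $q_C$, hence elements of $L_{q_C,q_C}(\mathcal{B})$. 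Since that set is commutative and contains $c_C\ne\varepsilon$, the Lyndon--Sch\"utzenberger theorem and uniqueness of primitive roots force every element of it to be a power of $p_C$; so $\rho w\sigma=p_C^{\,k}$ and $\rho w_{e,f}\sigma=p_C^{\,k_0}$ with $k\ge k_0$. Reading off $w$ and $w_{e,f}$ as substrings of the periodic word $p_C^{\omega}$ — both starting at position $|\rho|$, with $|w|-|w_{e,f}|$ a nonnegative multiple of $|p_C|$ — yields $w=w_{e,f}\cdot\widehat p_{e,f}^{\,k-k_0}$. In particular $L^C_{e,f}\subseteq w_{e,f}^{\,*}\,\widehat p_{e,f}^{\;*}$, a language that also contains $\varepsilon$; for SCCs without a cycle (single states without a self-loop) one simply has $L^C_{q,q}=\{\varepsilon\}$.

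For the assembly I would go through the SCCs in topological order and, for each $C_j$, first emit $w_{e,f}$ and then $\widehat p_{e,f}$ for every ordered pair $(e,f)$ of states of $C_j$ (nothing if $C_j$ has no cycle), and afterwards emit, as one-letter words, the labels of all edges leaving $C_j$ towards a later SCC. The total number of words is $O\!\left(\sum_j |C_j|^2+|E|\right)$ and each has length $O(|\mathcal{B}|)$, so the list $\beta_1,\dots,\beta_n$ is computed in polynomial time. For the inclusion $L(\mathcal{B})\subseteq\beta_1^*\cdots\beta_n^*$, take an accepting run. By the topological order it visits each SCC it touches in a single contiguous stretch and in increasing order, so its label factors as $\alpha_1 b_1\alpha_2 b_2\cdots b_{s-1}\alpha_s$, where $\alpha_t\in L^{C_{i_t}}_{e_t,f_t}$ is the portion inside the $t$-th visited SCC $C_{i_t}$ and $b_t$ is the label of a bridge edge out of $C_{i_t}$. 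By the claim, each $\alpha_t$ lies in the language of the two consecutive emitted words $w_{e_t,f_t},\widehat p_{e_t,f_t}$; each $b_t$ is one copy of the corresponding emitted bridge word; and all remaining emitted words — those for skipped SCCs, for other state pairs, and for unused bridges — are taken with exponent $0$, which is legitimate because each assembled gadget contains $\varepsilon$. Concatenating gives exactly the run's label.

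The main obstacle is the structural claim, i.e.\ showing that inside an SCC of a bounded NFA every connecting language $L^C_{e,f}$ collapses to $w_{e,f}\widehat p_{e,f}^{\;*}$; this is where the "only if" direction of the cited characterisation enters, combined with the combinatorics of periodic words (Fine--Wilf / Lyndon--Sch\"utzenberger). The remaining bookkeeping — turning the per-SCC gadgets into one global product $\beta_1^*\cdots\beta_n^*$, handling bridge edges and skipped SCCs via the $\varepsilon$-containment of each gadget, and checking the polynomial size bounds — is routine but needs to be carried out with some care.
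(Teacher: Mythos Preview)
Your argument is correct and follows the paper's high-level plan—trim, invoke the cited commutativity characterisation, prove a within-SCC containment, assemble via the condensation—but the core lemma is established differently. The paper observes that any two words in $L_{p,q}$ (for $p,q$ in the same SCC) must be prefix-comparable, since otherwise one can manufacture two distinct equal-length words in $L_{p,p}$, contradicting commutativity; this immediately gives $L_{p,q}\subseteq w_{p,q}^*\, w_{q,q}^*\, u_1^*\cdots u_m^*$ where $w_{q,q}=u_1\cdots u_m$ letter by letter, with no mention of primitive roots. Your route via Lyndon--Sch\"utzenberger and a single primitive period $p_C$ per component delivers the sharper inclusion $L^C_{e,f}\subseteq w_{e,f}\,\widehat p_{e,f}^{\,*}$ (hence fewer $\beta_i$), at the price of invoking periodic-word combinatorics. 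Your assembly, with explicit bridge-edge labels inserted in topological order, is also more detailed than the paper's distance-layered product $\prod_i\prod_{v\in D_i}\prod_{p,q\in v}L_{p,q}$, which leaves the handling of inter-component transitions implicit.
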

\begin{proof}
  For any two states $p,q$ with $L_{p,q}(\mathcal{B})\ne\emptyset$, compute a
  shortest word $w_{p,q}\in L_{p,q}(\mathcal{B})$ and let $P_q=u_1^*\cdots u_m^*$,
  where $w_{q,q}=u_1\cdots u_m$ and $u_1,\ldots,u_m$ are letters.

  We first prove the \lcnamecref{construct-bounded-words} for the
  languages $L_{p,q}=L_{p,q}(\mathcal{B})$ if $p,q$ lie in the same
  strongly connected component.  Any two words in $L_{p,q}$ have to be
  comparable in the prefix order: Otherwise we could construct two
  distinct words of equal length in $L_{p,p}$, contradicting the
  commutativity of $L_{p,p}$. Since $w_{p,q}w_{q,q}^*\subseteq
  L_{p,q}$, this means that every word in $L_{p,q}$ must be a prefix
  of a word in $w_{p,q}w_{q,q}^*$. In particular, we have
  $L_{p,q}\subseteq w_{p,q}^*w_{q,q}^*P_q$.

  In the general case, we assume that $\mathcal{B}$ has only one
  initial state $s$. We decompose
  $\mathcal{B}$ into strongly connected components, yielding a
  directed acyclic graph $\Gamma$ with vertices $V$.  For $i \leq |V|$,
  let $D_i=\{v\in V \mid \text{$v$ has distance $i$ from $[s]$ in $\Gamma$}\}$,
  where $[s]$ denotes the strongly connected component of $s$. Observe
  that $L(\mathcal{B})\subseteq \prod_{i=0}^{|V|} \prod_{v\in D_i}
  \prod_{p,q\in v} L_{p,q}$, where the two innermost products are carried out
  in an arbitrary order. Since we have established the
  \lcnamecref{construct-bounded-words} in the case of the $L_{p,q}$,
  this tells us how to perform the computation for $L(\mathcal{B})$.
\end{proof}

\begin{lem}
	\label{lem:bounded}
	The NFAs $\mathcal{A}_i$ from \cref{lemma-A_i} recognize bounded languages.
\end{lem}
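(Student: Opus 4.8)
The plan is to reduce quickly to the only interesting case and then invoke the commutativity criterion recalled before \cref{construct-bounded-words}. In Cases~1 and~2 of the proof of \cref{lemma-A_i} the automaton $\mathcal{A}_i$ accepts a single word, so $L(\mathcal{A}_i)$ is trivially bounded. In Case~3 (where $u_i$ evaluates to $(f,d)$ with $d\neq 0$) we have $L(\mathcal{A}_i)=L(\mathcal{B})\cup\{\overline{\underline{1}}\}$, and the construction for $d<0$ is obtained from a $d>0$ construction by language reversal; since the class of bounded languages is closed under finite unions and under reversal, it suffices to prove that $L(\mathcal{B})$ is bounded when $d>0$.

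To do this I would analyze the cycle structure of $\mathcal{B}$ and apply the criterion: $\mathcal{B}$ recognizes a bounded language iff $L_{q,q}(\mathcal{B})$ is commutative for every state $q$. The state $\top$ is a sink, and every state $q=(s_0,\dots,s_\ell)$ has at most two outgoing transitions, namely the unique one reading $\gamma=\alpha(s_0,\dots,s_\ell)$ and possibly one $\varepsilon$-transition. The key observation is that along any path inside a nontrivial strongly connected component the length $\ell+1$ of the progression increases only at $\varepsilon$-transitions (which are forced to occur precisely when $s_\ell=a+d$, i.e.\ $s_0=a+(\ell+1)d$) and decreases only at ``drop'' transitions (which occur precisely when $s_0=b$), while in between $s_0$ grows by one at each step. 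Tracking these two quantities should show that each nontrivial strongly connected component is in fact a single simple cycle: from every state on it, exactly one outgoing transition stays inside the component. Consequently $L_{q,q}(\mathcal{B})\subseteq W_q^{\,*}$ for a fixed word $W_q$ when $q$ lies on a cycle, and $L_{q,q}(\mathcal{B})=\{\varepsilon\}$ otherwise; in either case it is commutative.

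An equivalent, perhaps smoother, route is semantic: every word accepted by $\mathcal{B}$ is a Cayley representation $r(u_i^m)$ of some power, and for a position $p$ in the interior of the supporting interval $[a,b+(m-1)d]$ the set of copies of $u_i$ active at $p$ is the one active at $p+d$ shifted by one copy, so the element placed at $p$ depends only on $p \bmod d$. Hence $r(u_i^m)=P_m\,W^{c(m)}\,S_m$ for a fixed length-$d$ word $W$, with $c(m)=m-O(1)$, where the ``ramp-up'' and ``ramp-down'' words $P_m,S_m$ stabilize to fixed words $P,S$ once $m$ exceeds some bound $m_0$; thus $L(\mathcal{B})\subseteq F\cup P\,W^{*}\,S$ for a finite set $F$ of words, which is bounded. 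I expect the main work in either approach to be the careful bookkeeping at the interface between the transient and the periodic regime, together with the treatment of small $m$, for which there may be no genuine periodic part at all.
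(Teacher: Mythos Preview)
Your first route is exactly the paper's approach: show that every nontrivial strongly connected component of $\mathcal{B}$ is a single directed cycle, so that $L_{q,q}(\mathcal{B})=w_q^{\,*}$ is commutative. The paper carries this out not by simultaneously tracking $s_0$ and the tuple length, but via a single partition $Q=Q_0\uplus Q_1$, where $Q_0$ consists of the states $(s_0,\ldots,s_\ell)$ with $s_\ell\le a+d$. One checks that there is no transition from $Q_1$ back to $Q_0$, and that whenever a state $q$ has an outgoing $\varepsilon$-transition (which forces $s_\ell=a+d$, hence $q\in Q_0$), its non-$\varepsilon$ transition lands in $Q_1$ or in $\top$. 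Thus every state has at most one outgoing transition staying inside its own component, giving the simple-cycle structure immediately and with far less bookkeeping than you anticipate; there is no delicate ``transient versus periodic'' interface to manage at all.

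Your semantic alternative, reading off boundedness directly from the eventual periodicity of the Cayley representations $r(u_i^m)$, is not what the paper does. It would also work and has the appeal of being automaton-free, but making the stabilization of the prefix and suffix precise (and handling the finitely many small $m$) is indeed the fiddly part, whereas the automaton argument above avoids that entirely.
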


\begin{proof}
  The statement is clear for the automata which recognize singleton
  languages in cases 1.~and 2.  Consider the constructed automaton
  $\mathcal{B}$ from case 3.  It is almost deterministic in the
  following sense: Every state in $\mathcal{B}$ has at most one
  outgoing transition labelled by a symbol from the alphabet and at
  most one outgoing $\varepsilon$-transition.

  We partition its state set as $Q=Q_0\uplus Q_1$, where $Q_0$
  consists of those states $(s_0,\ldots,s_\ell)$ where $s_\ell\le
  a+d$. Since there is no transition from $Q_1$ to $Q_0$, every
  strongly connected component is either entirely within $Q_0$ or
  entirely within $Q_1$. If a state $q$ has an outgoing
  $\varepsilon$-transition, then $q\in Q_0$ and all
  non-$\varepsilon$-transitions from $q$ lead into $Q_1$. Therefore,
  every state in $\mathcal{B}$ has at most one outgoing transition
  that leads into the same strongly connected component.  Thus,
  every strongly connected component is a directed cycle, meaning
  that $L_{q,q}(\mathcal{B})=w^*$, where $w$ is the word read on that cycle.
  Hence, $\mathcal{B}$ recognizes a bounded language.  Hence also
  $L(\mathcal{A}_i) = L(\mathcal{B}) \cup \{ \overline{\underline{1}}
  \}$ is bounded.
\end{proof}

\begin{lem}
	There exists an NFA $\mathcal{A}$ of size $\prod_{i=1}^{k+1} O(|u_i|) \leq 2^{O(n \log n)}$ which recognizes a set representation of $E$,
	where $n = |E|$.
\end{lem}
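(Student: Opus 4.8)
The plan is to construct $\mathcal A$ as a synchronised product of the automata $\mathcal A_1,\dots,\mathcal A_{k+1}$ from \cref{lemma-A_i}, reading a candidate Cayley representation of some $E_\nu$ one position at a time. Recall from \cref{fig:cayley-rep} and \cref{ex-representation2} that a Cayley representation of $E_\nu$, for a partial assignment $\nu\colon X_1\to\N$, is obtained by placing the Cayley representations of the blocks $u_1^{x_1},\dots,u_k^{x_k},u_{k+1}$ at suitable horizontal offsets and multiplying column by column: writing $d_l$ for the cursor displacement of $u_l^{x_l}$ (with $d_l=0$ whenever $x_l\in X_0$) and $c_i=\sum_{l\le i}d_l$, block $i$ is laid down so that its top-marked symbol lands at position $c_{i-1}$ and hence its bottom-marked symbol at position $c_i$. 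Accordingly, a state of $\mathcal A$ records, for each $i\in[1,k+1]$, one of three things: that block $i$ has not started, that block $i$ is finished, or a current state of $\mathcal A_i$; the initial (resp.\ accepting) states are those in which every component is \text{not started} (resp.\ \text{finished}). Since the absolute offsets are not known in advance, $\mathcal A$ guesses nondeterministically, at each position, which not-yet-started blocks to start (by activating the corresponding $\mathcal A_i$ in its initial state and letting it make its first transition), and it moves a component to \text{finished} once it reaches a final state of $\mathcal A_i$.

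In each step, $\mathcal A$ emits the product, taken in the order $i=1,\dots,k+1$, of the marked knapsack expressions currently produced by the active components, an inactive component contributing the trivial expression $1$. The placement of the markers is the only delicate point. The transition relation enforces, for every $i\in[1,k]$, that the transition in which $\mathcal A_i$ emits its bottom-marked symbol is taken in the same step as the transition in which $\mathcal A_{i+1}$ emits its top-marked symbol; this encodes that block $i+1$ starts exactly where block $i$ ends. In the emitted word all internal block markers are discarded, and the produced column is declared top-marked precisely in the step in which $\mathcal A_1$ emits its top-marked symbol and bottom-marked precisely in the step in which $\mathcal A_{k+1}$ emits its bottom-marked symbol; if these two steps coincide, which happens exactly when the total displacement of $E_\nu$ is $0$, the column carries $\overline{\underline{\,\cdot\,}}$. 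Degenerate configurations (e.g.\ $x_i\in X_0$ or $\nu(x_i)=0$, forcing $c_{i-1}=c_i$ and several consecutive markers onto one position) are absorbed by this same mechanism. Because the displacements $d_l$ may be negative, blocks can overlap arbitrarily — a later block may even begin to the left of an earlier one — so the verification that the \text{start} guesses together with the chain of synchronisation constraints force exactly the Cayley representations of the expressions $E_\nu$ and produce the correct column marker in every degenerate configuration is the main obstacle; I expect this bookkeeping to be the most technical part of the argument.

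It then remains to verify the two defining conditions of a set representation. For the first, given $\nu\colon X_1\to\N$, choose for each $i$ a Cayley representation $r_i\in L(\mathcal A_i)$ of $u_i^{\nu(x_i)}$ (resp.\ of $u_{k+1}$), which exists by \cref{lemma-A_i}, and align the $r_i$ at the offsets $c_{i-1}$ computed from $\nu$; all synchronisation constraints hold, the column-wise product is a Cayley representation of $E_\nu$, and the corresponding run of $\mathcal A$ is accepting. For the second, an accepting run of $\mathcal A$ on a word $r$ restricts to accepting runs of the $\mathcal A_i$ on words $r_i$, each a Cayley representation of $u_i^{m_i}$ for some $m_i\ge 0$; the synchronisation constraints force the starting positions of the blocks to be mutually consistent, so $r$ is, up to a global shift, the column-wise product of the $r_i$ placed at the offsets determined by $\nu$ with $\nu(x_i)=m_i$, hence a Cayley representation of $E_\nu$. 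Moreover, by the second condition in \cref{lemma-A_i} we have $\nu(x_i)=m_i\le|r_i|$, and $|r_i|\le|r|$ because the positions read by the sub-run of $\mathcal A_i$ form a subinterval of those read by $\mathcal A$; thus $\nu(x)\le|r|$ for every $x\in X_1$.

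Finally, the state set of $\mathcal A$ is contained in $\prod_{i=1}^{k+1}\bigl(Q_i\cup\{\text{not started},\text{finished}\}\bigr)$, where $Q_i$ is the state set of $\mathcal A_i$, and the marker bookkeeping contributes only a constant factor per component; by \cref{lemma-A_i} this gives $|\mathcal A|\le\prod_{i=1}^{k+1}|u_i|^{O(1)}$. Since $\sum_{i=1}^{k+1}|u_i|\le n$ and $k+1=O(n)$, this product is at most $n^{O(n)}=2^{O(n\log n)}$, which is the asserted bound.
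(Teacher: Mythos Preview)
Your construction is essentially the paper's: the paper pads each $\mathcal A_i$ so that $L(\mathcal A_i)=1^*\,L(\mathcal A_i)\,1^*$ (which plays exactly the role of your ``not started''/``finished'' states), forms the direct product restricted to the alphabet of \emph{consistent tuples} (your marker--synchronisation constraint that the bottom mark of row $i$ and the top mark of row $i+1$ occur in the same column), and then projects each tuple $(\gamma_1,\dots,\gamma_{k+1})$ to the product of the underlying expressions, top-/bottom-marked according to $\gamma_1$ and $\gamma_{k+1}$. Your verification of the two set-representation conditions and of the size bound is actually more detailed than the paper's, which just writes ``One can verify that $\mathcal A$ recognizes a set representation of~$E$.''
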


\begin{proof}
Reconsider the automata $\mathcal{A}_i$ from \cref{lemma-A_i}.
We first ensure that for all $1 \le i \le k+1$ we have
$L(\mathcal{A}_i) = 1^* \, L(\mathcal{A}_i) \, 1^*$,
which can be achieved using two new states in $\mathcal{A}_i$.
Let $\mathcal{E}_i$ be the finite alphabet of marked knapsack expressions that occur
as labels in $\mathcal{A}_i$ and let $\mathcal{E}$ be the set of consistent tuples in the cartesian
product $\mathcal{E}_1\times\cdots\times \mathcal{E}_{k+1}$.

Let $\mathcal{A}'$ be the following product NFA over the alphabet $\mathcal{E}$.
It stores a $(k+1)$-tuple of states (one for each NFA $\mathcal{A}_i$).
On input of a consistent tuple $(\gamma_1, \dots, \gamma_{k+1}) \in \mathcal{E}$ 
it reads $\gamma_i$ into $\mathcal{A}_i$.
The size of $\mathcal{A}'$ is $\prod_{i=1}^{k+1} O(|u_i|) \leq  2^{O(n \log n)}$.
To obtain the NFA $\mathcal{A}$ we project the transition labels of $\mathcal{A}'$ as follows:
Let $(\gamma_1, \dots, \gamma_{k+1}) \in \mathcal{E}$ and let $(\chi_1, \ldots, \chi_{k+1})$ obtained
by removing all markings from the $\gamma_i$. We then replace the transition label
$(\chi_1, \dots, \chi_{k+1})$ by
\begin{itemize}
\item $\chi_1 \cdots \chi_{k+1}$ if neither $\chi_1$ is top-marked nor $\chi_{k+1}$ is bottom-marked,
\item $\overline{\chi_1 \cdots \chi_{k+1}}$ if $\chi_1$ is top-marked and $\chi_{k+1}$ is not bottom-marked,
\item $\underline{\chi_1 \cdots \chi_{k+1}}$ if $\chi_1$ is not top-marked and $\chi_{k+1}$ is bottom-marked,
\item $\underline{\overline{\chi_1 \cdots \chi_{k+1}}}$ if $\chi_1$ is top-marked and $\chi_{k+1}$ is bottom-marked.
\end{itemize}
One can verify that $\mathcal{A}$ recognizes a set representation of $E$.
\end{proof}

\begin{prop} \label{prop-NP-reduction}
	Let $G$ be a finitely generated abelian group.
	If $\ExpEq(G) \in \mathsf{NP}$ and $\MEM(G^\rho_{\ast}) \in \mathsf{NP}$,
	then also $\mathrm{KP}(G \wr \Z) \in \mathsf{NP}$.
\end{prop}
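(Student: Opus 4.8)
The plan is to exhibit a nondeterministic polynomial-time algorithm for deciding $\Sol(E)\ne\emptyset$ for an input knapsack expression $E=u_1^{x_1}\cdots u_k^{x_k}u_{k+1}$ over $G\wr\Z$. Write $X=X_0\uplus X_1$ as before and set $d_i=\sigma(u_i)\in\Z$ (so $d_i=0$ iff $x_i\in X_0$); these are computable in polynomial time. A valuation $\nu$ is a solution iff $\sigma(\nu(E))=\sum_{x_i\in X_1}\nu(x_i)d_i+d_{k+1}=0$ and $\tau(\nu(E))(j)=1$ for every $j\in\Z$. First I would compute the NFAs $\mathcal A_1,\dots,\mathcal A_{k+1}$ from \cref{lemma-A_i}; by \cref{lem:bounded} each $L(\mathcal A_i)$ is bounded, so by \cref{construct-bounded-words} one obtains in polynomial time, for each $i$, words whose stars cover $L(\mathcal A_i)$. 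Reading off this structure gives a compact description of the Cayley representations of the blocks: for $x_i\in X_1$, a Cayley representation of $u_i^m$ is a bounded prefix, a periodic middle repeated $\Theta(m)$ times with period $|d_i|\le|u_i|$ (computable from $u_i$), and a bounded suffix; for $x_i\in X_0$ it is a single word of length $\le|u_i|+1$ whose letters are the $G$-knapsack expressions $f_i(c)^{x_i}$; and $u_{k+1}$ gives a single variable-free word of length $\le|u_{k+1}|$.

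Next I would fix the overlay structure. The algorithm guesses, using $O(n\log n)$ bits, a \emph{type}: a linear arrangement of the $O(n)$ endpoints of all prefixes, periodic middles, suffixes, $X_0$-blocks and of the $u_{k+1}$-block, together with, for each $x_i\in X_1$, whether $\nu(x_i)$ is ``small'' (below a fixed polynomial threshold, in which case its value is guessed directly) or ``large''. Given the type, the position of every endpoint is an explicit affine function of $\nu\rest_{X_1}$, so the requirements that the guessed arrangement actually hold and that $\sigma(\nu(E))=0$ form an existential Presburger formula of polynomial size; I would guess a satisfying assignment of $\nu\rest_{X_1}$ with polynomially many bits (a small solution exists whenever the formula is satisfiable). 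Because $G$ is abelian, the $G$-knapsack expression carried at a position $j$ of the combined Cayley representation is $\big(\prod_{x_i\in X_0}\mathrm{base}_{i,j}^{x_i}\big)\cdot c_j$ with $\mathrm{base}_{i,j},c_j\in G$; the key structural claim is that between consecutive endpoints either the region has length $\le|u_i|+1$ for an active $X_0$-block (a ``short'' region, of which there are only $O(n)$, accounting for $O(n^2)$ positions, and likewise for the $u_{k+1}$-support), or the region is covered only by periodic middles of a subset $S\subseteq X_1$ of blocks, in which case $j\mapsto c_j$ on that region is the product (in $G^\rho$) of the $|S|\le k$ periodic sequences $b^{(i)}$ with periods $|d_i|\le n$, with no $X_0$-variable present.

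The verification then splits in two. All positions in short regions and in the $u_{k+1}$-support --- polynomially many, all associated data computable in polynomial time from the guessed $\nu\rest_{X_1}$ --- yield a polynomial-size instance of $\ExpEq(G)$, one expression ``$\prod_{x_i\in X_0}\mathrm{base}_{i,j}^{x_i}\cdot c_j=1$'' per position; I would invoke the hypothesis $\ExpEq(G)\in\NP$ and guess-and-verify a witness of its solvability. For each long region $[\ell,r)$ covered by $S\subseteq X_1$, the condition that $\tau(\nu(E))$ vanish there is exactly that $\bigcirc_{i\in S} f_{u'_i}\in G^\rho_{r-\ell}$, where $u'_i$ is one period of $b^{(i)}$ aligned with $\ell$; this is a polynomial-size instance of $\MEM(G^\rho_{\ast})$ with the binary number $r-\ell$ (which has polynomially many bits, as $r-\ell=O(\nu(x_i)|d_i|)$ and $\nu\rest_{X_1}$ is poly-bit), so I would invoke $\MEM(G^\rho_{\ast})\in\NP$ and guess-and-verify. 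The algorithm accepts iff all guesses are consistent and all subinstances verify positively; no consistency is needed between the $X_0$-valuation produced for $\ExpEq(G)$ and the rest, since the long-region conditions are variable-free.

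For correctness, the ``if'' direction assembles the guessed and computed data into an honest solution. For ``only if'', from any solution $\nu$ I take the type of $\nu\rest_{X_1}$; the set of valuations of $X_1$ with that type, satisfying $\sigma(\cdot)=0$, and agreeing with $\nu\rest_{X_1}$ on the residues modulo the $|d_i|$ (which fix the phases of the periodic middles) is semilinear and contains $\nu\rest_{X_1}$, hence a poly-bit member $\nu'\rest_{X_1}$ exists; the $\ExpEq(G)$ and $\MEM(G^\rho_{\ast})$ instances depend only on the type and those residues, so they remain solvable, and extending $\nu'\rest_{X_1}$ by a solution of the $\ExpEq(G)$ instance gives a solution the algorithm accepts. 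I expect the main obstacle to be the structural bookkeeping of the overlay: proving rigorously that only polynomially many ``short'' regions occur, that every ``long'' region is indeed covered solely by periodic middles (so its expression is $X_0$-free and genuinely periodic), and that all endpoint positions, phases and period patterns are polynomial-time computable from a poly-bit encoding of $\nu\rest_{X_1}$. Once this is in place, the two oracle calls (\cref{prop-NP-reduction}'s hypotheses) and the Presburger guess make the $\NP$ bound routine.
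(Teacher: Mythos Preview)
Your decomposition is the same as the paper's: the $X_0$-positions (polynomially many) feed into an $\ExpEq(G)$ instance, and the long stretches covered only by periodic middles feed into $\MEM(G^\rho_{\ast})$ instances. The ``if'' direction of your algorithm is sound for exactly the reasons you give.

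The gap is in the ``only if'' direction, specifically in your claim that ``the $\ExpEq(G)$ and $\MEM(G^\rho_{\ast})$ instances depend only on the type and those residues''. This is not true as stated. Fixing residues of the $\nu(x_j)$ modulo the $|d_i|$ does fix the \emph{phases} of the periodic patterns, hence the periodic function $F=\bigcirc_{i\in S} f_{u'_i}$ in each long region is indeed unchanged. But the answer to the $\MEM$ query ``$F\in G^\rho_{r-\ell}$?'' depends on the length $r-\ell$, which is an affine (and in general non-constant) function of $\nu\rest_{X_1}$; if the combined period $\mathrm{lcm}_{i\in S}|d_i|$ exceeds $r-\ell$ (and this lcm can be exponential in $n$), passing to a different $\nu'\rest_{X_1}$ with the same type and residues can turn a positive $\MEM$ answer into a negative one. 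Similarly, the $\ExpEq(G)$ equations attached to a short region depend on the exact offsets $e_j-s_i$ between the region boundary and the starting positions of the overlapping bounded pieces, and these offsets are affine but not pinned down by type and residues alone. So your Presburger shrinking argument does not, as written, guarantee that the shrunken $\nu'\rest_{X_1}$ still makes the verification subinstances solvable.

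The paper closes this gap by a different, cleaner mechanism. Instead of arguing invariance of the subinstances, it first proves an exponential bound on $\nu\rest_{X_1}$ directly: form the product NFA $\mathcal{A}$ of the $\mathcal{A}_i$ (exponential size), substitute into its alphabet the $G$-values obtained from a fixed solution's $\nu\rest_{X_0}$ to get $\mathcal{A}'$, observe that $\mathcal{A}'$ accepts an all-$1$'s word with matching origin/cursor markers, and pump this acceptance down to a word of length $2^{O(n\log n)}$. That short word lifts to a Cayley representation of $E_{\nu'}$ for some $\nu'\rest_{X_1}$ with $\nu'(x)\le 2^{O(n\log n)}$, and by construction $\nu'\rest_{X_1}\cup\nu\rest_{X_0}$ is again a solution. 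Once the exponential bound is available, the paper guesses the words $\alpha_i\in L(\mathcal{A}_i)$ in \emph{cycle-compressed} form (using the bounded-language words from \cref{construct-bounded-words}), verifies $\alpha_i\in L(\mathcal{A}_i)$ via compressed membership, and then performs the same split into $\ExpEq(G)$ and $\MEM(G^\rho_{\ast})$ that you describe. Your structural analysis could be made to work with more bookkeeping (e.g.\ additionally fixing every bounded affine difference between nearby endpoints, and treating long regions of length below the relevant lcm separately), but the product-NFA pumping argument is the missing idea that makes the small-solution step go through without it.
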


\begin{proof}
We first claim that, if $E = 1$ is solvable, then there exists a solution $\nu$
such that $\nu(x)$ is exponentially bounded in $n$ for all $x \in X_1$.
Assume that $\nu$ is a solution for $E=1$.
From the NFA $\mathcal{A}$, we obtain an automaton $\mathcal{A}'$ by replacing
each knapsack expression in the alphabet of $\mathcal{A}$ by its value unter $\nu$ in $G$.
Then, $\mathcal{A}'$ has the same number of states as $\mathcal{A}$, hence at most $2^{O(n\log n)}$.
Moreover, $\mathcal{A}'$ accepts
a Cayley representation of the identity of $G \wr \Z$ (which is just a sequence of $1$'s).
Due to the size bound, $\mathcal{A}'$ accepts such a representation of length $2^{O(n \log n)}$.
Since $\mathcal{A}$ accepts a set representation of $E$, this short computation corresponds to a solution $\nu'$. By definition of a set representation,
for each $x\in X_1$, $\mathcal{A}'$ makes at least $\nu'(x)$ steps. Therefore, $\nu'(x)$ is bounded
exponentially for $x\in X_1$.

Since each $\mathcal{A}_i$ accepts a set representation
of $u_i^{x_i}$, $i\in[1,k]$ or of $u_{k+1}$, this implies that solvability of $E$ is
witnessed by words $\alpha_1,\ldots,\alpha_{k+1}$ with
$\alpha_i\in L(\mathcal{A}_i)$ for $i\in[1,k+1]$ whose length is bounded
exponentially.

In the following we will encode exponentially long words as follows:
A {\em cycle compression} of a word $w$ is a sequence $(\beta_1, \ell_1, \dots, \beta_m, \ell_m)$
where each $\beta_i$ is a word and each $\ell_i \ge 0$ is a binary encoded integer
such that there exists a factorization $w = w_1 \cdots w_m$
and each factor $w_i$ is the prefix of $\beta_i^\omega$ of length $\ell_i$.
Each $w_i$ is called a {\em cycle factor} in $w$.

We need the following simple observation.
Let $(\beta_1, \ell_1, \dots, \beta_m, \ell_m)$ be a cycle compression
of a word $w$ with the corresponding factorization $w = w_1 \cdots w_m$.
Given a position $p$ in $w$ which yields factorizations $w = uv$, $u = w_1 \cdots w_{i-1} w_i'$,
$v = w_i'' w_{i+1} \cdots w_m$ and $w_i = w_i' w_i''$.
{\em Splitting} $(\beta_1, \ell_1, \dots, \beta_m, \ell_m)$ at position $p$
yields the unique cycle compression of $w$ of the form
\[
	(\beta_1, \ell_1, \dots, \beta_{i-1}, \ell_{i-1}, \beta_i', \ell_i', \beta_i'', \ell_i'', \dots, \beta_m, \ell_m)\
\]
where $|w_i'| = \ell_i'$ and $|w_i''| = \ell_i''$.
Clearly, splitting can be performed in polynomial time.
With the help of splitting operations we can also remove a given set of positions from a cycle compressed word in polynomial time.

This leads us to our $\mathsf{NP}$-algorithm: First we construct the
NFAs $\mathcal{A}_i$ as above.  By \cref{lem:bounded} each NFA
$\mathcal{A}_i$ recognizes a bounded language. Hence for each $i \in [1,k+1]$,
\cref{construct-bounded-words} allows us to compute in
polynomial time words $\beta_{i,1}, \dots, \beta_{i,m_i}$ such that
$L(\mathcal{A}_i) \subseteq \beta_{i,1}^* \cdots \beta_{i,m_i}^*$.  For
each $\mathcal{A}_i$ we guess a cycle compression
$(\beta_{i,1}, \ell_{i,1}, \dots, \beta_{i,m_i}, \ell_{i,m_i})$
of a word $\alpha_i$ such that the words
$\alpha_1,\ldots,\alpha_{k+1}$ have equal length $\ell$.
Then, we test in polynomial time whether
$\alpha_i$ is accepted by $\mathcal{A}_i$ (this is a restricted case
of the {\em compressed membership problem} of a regular language
\cite{Loh12survey}).  Next we verify in polynomial time whether the
markers of the $\alpha_i$ are consistent and whether the position of
the origin in $\alpha_1$ coincides with the position of the cursor in
$\alpha_{k+1}$.  If so, we remove all markers from the words
$\alpha_i$.

Finally we reduce to instances of $\ExpEq(G)$ and
$\MEM(G^\rho_{\ast})$.
Denote with $P = \{p_1, \dots, p_r\}\subseteq
[1,\ell]$ the set of positions $p$ such that there exists a
variable $x_i \in X_0$ occurring in $\alpha_i[p]$, which is the
expression at position $p$ in $\alpha_i$. Note that if a variable
$x_i\in X_0$ occurs in $\alpha_i$, then by definition of $X_0$ and set
representations, $\alpha_i$ contains at most $|u_i|^{O(1)}$ positions
with an expression $\ne 1$. We can therefore compute $P$ in polynomial
time and obtain an instance of $\ExpEq(G)$ containing the expression
$\alpha_1[p_j]\cdots \alpha_{k+1}[p_j]$
for each $j\in[1,r]$.
%\[
%	\quad \prod_{i=1}^{k+1} (\alpha_i[p_1], \dots, \alpha_i[p_r]) = 1,
%\]
We then remove the positions in $P$ from the words $\alpha_i$ and compute
cycle compressions $(\beta_{i,1}, \ell_{i,1}, \dots, \beta_{i,m_i}, \ell_{i,m_i})$
of the new words $\alpha_i$ in polynomial time.

The remaining words reduce to instances of $\MEM(G^\rho_{\ast})$ as follows:
Consider the set of at most $\sum_{i=1}^{k+1} m_i$ positions at which some cycle factor
begins in $\alpha_i$.
By splitting all words $\alpha_i$ along these positions we obtain new cycle compressions
of the form $(\beta_{i,1}, \ell_1, \dots, \beta_{i,m}, \ell_m)$
of $\alpha_i$, i.e., the $j$-th cycle factor has uniform length across all $\alpha_i$.
From this representation one easily obtains $m$ instances of  $\MEM(G^\rho_{\ast})$.
\end{proof}
\cref{prop-NP-reduction} yields the $\NP$ upper bound for
\cref{thm-NP}: If $G$ is a finitely generated abelian group, then
$G\cong\Z^n\oplus\bigoplus_{i=1}^m (\Z/r_i\Z)$ for some
$n,r_1,\ldots,r_m\in\N$, so that $\ExpEq(G)$ corresponds to the
solvability problem for linear equation systems over the integers,
possibly with modulo-constraints (if $m>0$). This is a well known
problem in $\NP$.  Moreover, $\MEM(G^\rho_{\ast})$ belongs to $\TC^0$
by \cref{thm-abelian-membership}.

It remains to prove the $\NP$-hardness part of \cref{thm-NP}, which is the content of the next section.

\subsection{NP-hardness}

\begin{thm}
	If $G$ is non-trivial, then $\Knapsack(G \wr \Z)$ is $\NP$-hard.
\end{thm}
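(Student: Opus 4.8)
The plan is to give a polynomial-time reduction from the classical \textsc{SubsetSum} problem, which is $\NP$-complete for binary inputs~\cite{Karp72}: given positive integers $a_1,\dots,a_n,b$ in binary, decide whether $\sum_{i\in I}a_i=b$ for some $I\subseteq[1,n]$. Fix an element $a\in G\setminus\{1\}$ and a generator $t$ of $\Z$, so that $\langle a\rangle\wr\langle t\rangle\le G\wr\Z$; it suffices to construct a knapsack expression over this subgroup.

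First I would treat the case $\ord(a)=\infty$ (which already covers all infinite abelian $G$, re-proving the result of~\cite{MyNiUs14}). For a finitely supported family of ``instructions'' of the form ``place $a^{c}$ at position $p$'' ($c\in\Z$, $p\in\Z$), the element $\prod_pt^{p}a^{c_p}t^{-p}$ of $\langle a\rangle\wr\langle t\rangle$ has trivial $\Z$-component and a word of length polynomial in $\max_p|p|+\sum_p|c_p|$; call such elements \emph{flat}. Write $a_i=\sum_{j=0}^{\beta-1}\varepsilon_{ij}2^j$ and $b=\sum_{j=0}^{\beta}b_j2^j$ in binary, with $\beta$ large enough that $\sum_ia_i<2^{\beta}$ (so $\varepsilon_{i\beta}=b_\beta=0$). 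Pick pairwise distinct positions $\theta_0,\dots,\theta_\beta,\pi_1,\dots,\pi_n$ of absolute value $O(\beta+n)$, introduce variables $x_i,\bar x_i$ ($1\le i\le n$) and $y_j$ ($0\le j\le\beta-1$), and flat elements $e_i$ (place $a^{\varepsilon_{ij}}$ at $\theta_j$ for all $j$ and $a$ at $\pi_i$), $\bar e_i$ (place $a$ at $\pi_i$), $h_j$ (place $a$ at $\theta_{j+1}$ and $a^{-2}$ at $\theta_j$), and a constant flat element $g$ (place $a^{b_j}$ at $\theta_j$ for all $j$ and $a$ at $\pi_i$ for all $i$). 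Then
\[
  E\;=\;\Big(\prod_{i=1}^{n}e_i^{x_i}\,\bar e_i^{\bar x_i}\Big)\Big(\prod_{j=0}^{\beta-1}h_j^{y_j}\Big)\,g^{-1}
\]
is a genuine knapsack expression (each variable occurs once) of polynomial length. Since all factors are flat, multiplication moves no cursor and the factors do not interact, so $E$ evaluates to $1$ iff the element of $\langle a\rangle$ sitting at each point of $\Z$ is trivial. At $\pi_i$ this reads $a^{x_i+\bar x_i}=a$, which (as $\ord(a)=\infty$) forces $x_i\in\{0,1\}$ and $\bar x_i=1-x_i$; at $\theta_j$ it reads $\sum_i\varepsilon_{ij}x_i+y_{j-1}-2y_j=b_j$ (with $y_{-1}:=y_\beta:=0$), the school-book ``bit $j$ with carry'' equation. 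Multiplying the $\theta_j$-equations by $2^j$ and summing makes the carry terms telescope, leaving $\sum_ix_ia_i=b$. Hence $E$ is solvable iff the instance is a yes-instance. The Boolean-forcing at the $\pi_i$ and the reduction to a single equation via carries are precisely what let everything be packaged into one knapsack expression.

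The case of finite $G$ is the main obstacle. Using $\Z_p\le G$ for a prime $p$ and the fact that a finitely generated subgroup inherits $\NP$-hardness of knapsack, it suffices to treat $G=\Z_p$. Now flat elements of $\Z_p\wr\Z$ record only residues modulo $p$, and any bounded system of such congruences is solvable in polynomial time; so the hardness must instead be extracted from the \emph{exponentially long} periodic lamp patterns that a loop $u^{x}$ produces for large $x$, together with the single genuinely integral datum, the cursor displacement. The approach would again be a reduction from \textsc{SubsetSum}, this time encoding the binary numbers into the \emph{lengths} of arithmetic-progression patterns: superpose loops of the shape $(a^{c}t^{d})^{x}$ so that, over $\Z_p$, the resulting progressions cancel exactly when the bit relations of $\sum_{i\in I}a_i=b$ hold, while the displacement equation pins down the overall magnitude. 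Arranging the loops so that cancellation is \emph{equivalent} to the intended Diophantine relations, and so that each variable is still used only once (so that a single knapsack expression results rather than a system), is the technical crux.
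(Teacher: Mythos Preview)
Your infinite-order argument is correct: the reduction from binary \textsc{SubsetSum} with carry variables $y_j$ produces a polynomial-size knapsack expression over $\langle a\rangle\wr\Z$, the $\pi_i$-positions force $x_i\in\{0,1\}$, and the $\theta_j$-equations telescope to $\sum_i x_ia_i=b$ exactly as you claim. But the torsion case is not proved. You correctly diagnose why the flat-element trick collapses over $\Z_p$ (lamp contents only record residues), and you gesture at using loops $(a^ct^d)^x$ so that magnitudes are encoded in the \emph{lengths} of periodic lamp patterns rather than in lamp \emph{values}; yet the final paragraph is a list of desiderata, not a construction. Designing loops whose superposed progressions cancel over $\Z_p$ precisely when a prescribed Diophantine relation holds, while keeping each variable single-use, is exactly the nontrivial step, and you stop before it. Since the theorem must cover, say, $G=\Z_2$, this is a genuine gap.

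The paper bypasses the case split entirely by changing the source problem: it reduces from \textsc{3-Dimensional Matching} rather than binary \textsc{SubsetSum}. A 3DM instance over $[1,q]^3$ has all numerical data bounded by the input size, so one can work with flat elements that only ever write the single lamp symbol $g$ (and $g^{-1}$ in the cleanup), never a power $a^c$ with $|c|$ growing. Each triple $e_l=(i,j,k)$ becomes a flat word $w_l$ writing $g$ at positions $i,\,q+j,\,2q+k$ and at a tag position $(3q+1)l$; the expression is $w_1^{x_1}\cdots w_t^{x_t}$ followed by a fixed cleanup writing $g^{-1}$ at $1,\dots,3q$ and a variable-driven cleanup writing $g^{-1}$ at exactly $q$ tag positions. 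The correctness analysis then uses only that $g\ne 1$, and works verbatim whether $\ord(g)$ is finite or infinite. The moral: your carry encoding is forced to read \emph{integer} equations off lamp contents, which is what breaks over $\Z_p$; the fix is not to complicate the loops but to start from an $\NP$-hard problem whose numbers are already small.
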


\begin{proof}
Since every non-trivial group contains a non-trivial cyclic group, we may assume that $G$ is non-trivial and abelian.
We reduce from 3-dimensional matching, 3DM for short.  
In this problem, we have a set of triples $T =\{e_1,\dots,e_{t}\} \subseteq [1,q] \times [1,q] \times [1,q]$ 
for some $q \geq 1$, and the question whether there is a subset $M \subseteq T$ such that $|M|=q$ and all pairs $(i,j,k),(i^{\prime},j^{\prime},k^{\prime}) \in M$ with $(i,j,k)\neq (i^{\prime},j^{\prime},k^{\prime})$ satisfy $i \neq i^{\prime}, j \neq j^{\prime}$ and $k \neq k^{\prime}$; such a set $M$ is called a matching. 
Since we will write all group operations multiplicatively, we denote the generator of $\Z$ by $a$.

 Let $G$ be a non-trivial group and $g \in G \setminus \{1\}$. We reduce 3DM to $\Knapsack(G \wr \Z)$ in the following way: for every $e_l=(i,j,k) \in T$ let 
\begin{eqnarray*}
w_l & = & a^iga^{q-i+j}ga^{q-j+k}ga^{-2q-k+(3q+1)l}ga^{-(3q+1)l} \\
& = &  \underbrace{a^iga^{q-i+j}ga^{q-j+k}ga^{-2q-k}}_{u_l} \ \underbrace{a^{(3q+1)l}ga^{-(3q+1)l}}_{v_l} .
\end{eqnarray*}
Intuitively, $u_l$ is the word that puts $g$ on positions $i$, $q+j$ and $2q+k$, and 
$v_l$  puts $g$ on position $(3q+1)l$ and then moves the cursor back to $0$. 
Hence,  $v_l$ is contained in $G^{(\Z)}$ and thus commutes with every element of $G\wr\Z$
(recall that $G$ is abelian).

We define the knapsack expression
\[ E = w_1^{x_1}\cdots w_{t}^{x_{t}} \; (a g^{-1})^{3q} a^{-3q}  \;     
\prod_{i=1}^q ( a^{(3q+1)y_i} g^{-1}) \; a^{-(3q+1)y_{q+1}} 
\]
with variables $x_1, \ldots, x_t, y_1, \ldots, y_{q+1}$. 
For all values of these variables, the following equivalences hold.
\begin{eqnarray*}
w_1^{x_1}\cdots w_t^{x_{t}} \; (a g^{-1})^{3q} a^{-3q}  \;     
\prod_{i=1}^q ( a^{(3q+1)y_i} g^{-1}) \; a^{-(3q+1)y_{q+1}} = 1 \ & \Leftrightarrow & \\
u_1^{x_1}\cdots u_t^{x_{t}} \; (a g^{-1})^{3q} a^{-3q}  \;     
v_1^{x_1}\cdots v_t^{x_{t}} \; \prod_{i=1}^q ( a^{(3q+1)y_i} g^{-1}) \; a^{-(3q+1)y_{q+1}} = 1 \ & \Leftrightarrow & \\
\underbrace{u_1^{x_1}\cdots u_t^{x_{t}} \; (a g^{-1})^{3q} a^{-3q}}_{E_1} = 1 \text{ and } 
\underbrace{v_1^{x_1}\cdots v_t^{x_{t}} \; \prod_{i=1}^q ( a^{(3q+1)y_i} g^{-1}) \; a^{-(3q+1)y_{q+1}}}_{E_2} = 1
\end{eqnarray*}
The second equivalence holds because (i)~for all values of the variables, the word $E_1$ 
only affects positions from the interval $[1,3q]$, whereas the word 
$E_2$ 
only affects positions that are multiples of $3q+1$ and (ii)~$E_2$ represents a word in $G^{(\Z)}$.

First assume that there is a matching $M \subseteq T$.
We define a valuation $\nu$  for $E$
by $\nu(x_i)=1$ if $e_i\in M$ and $\nu(x_i)=0$ if $e_i \notin M$. Let $M=\{e_{m_1},\dots,e_{m_q}\}$ such that $m_i < m_j$ for $i<j$
and let $m_0 = 0$.
Then we set $\nu(y_i)=m_{i}-m_{i-1}$ for $1 \leq i \leq q$, and $\nu(y_{q+1})=m_q$. 
Since $M$ is a matching, we have
\[ \nu( u_{e_1}^{x_1}\cdots u_{e_{t}}^{x_{t}} ) =  \prod_{e_l \in M} u_l = (a g)^{3q} a^{-3q}
\]
and thus $\nu(E_1) = 1$.
Furthermore, we have
\[ \nu( v_{e_1}^{x_1}\cdots v_{e_{t}}^{x_{t}} ) = 
\prod_{i=1}^q  a^{(3q+1)m_i}ga^{-(3q+1)m_i} = \prod_{i=1}^q ( a^{(3q+1) (m_{i}-m_{i-1})} g) \; a^{-(3q+1) m_q}
\]
and thus $\nu(E_2) = 1$.

Now assume that there is a valuation $\nu$ for $E$ with $\nu(E_1) = \nu(E_2) = 1$. Let $n_i = \nu(x_i)$ and $m_i = \nu(y_i)$.
For every $1 \leq l \leq t$, we must have $g^{n_l} \in \{1,g\}$, i.e., $n_l \equiv 0 \bmod \mathsf{ord}(g)$ or 
$n_l \equiv 1 \bmod \mathsf{ord}(g)$.
We first show that $q' := \#\{l \mid n_l \equiv 1 \bmod \mathsf{ord}(g) \}=q$. This follows from $\nu(E_2)=1$ and 
the fact that the effect of 
$\prod_{i=1}^q a^{(3q+1)m_i} g^{-1}$ is to multiply the $G$-elements at exactly $q$ many
positions $p$ ($p \equiv 0 \bmod (3q+1)$) with $g^{-1}$. Hence, the effect of 
$v_1^{n_1}\cdots v_t^{n_{t}}$ must be to multiply the $G$-elements at exactly $q$ many
positions $p$ ($p \equiv 0 \bmod (3q+1)$) with $g$. But this means that 
$q'=q$.

So we can assume that $q'=q$. We finally show that $M=\{e_l \mid n_l \equiv 1 \bmod \mathsf{ord}(g) \} \subseteq T$ is a matching: Assume that there are $e=(i,j,k) \in M$ and $e^{\prime}=(i^{\prime},j^{\prime},k^{\prime}) \in M$ with $i=i^{\prime}, j=j^{\prime}$ or $k=k^{\prime}$. Since $q'=q$ this would imply that at most $3q-1$ positions $p$ with $1 \leq p \leq 3q$ can be set to $g$ by the word $u_{e_1}^{n_1}\cdots u_{e_{t}}^{n_{t}}$. 
 But then,  $(a g^{-1})^{3q} a^{-3q}$ would leave a position with value $g^{-1}$, and hence $\nu(E_1) \neq 1$.
 Hence, $M$ must be a matching. Notice that the argumentation of the whole proof still works in the case that we allow the variables $x_1, \dots, x_{t},y_1,\dots, y_{q+1}$ to be integers instead of naturals.
\end{proof}
Note that the above $\NP$-hardness proof also works for the subset sum problem, where the range of the valuation is restricted
to $\{0,1\}$. Moreover, if the word problems for two groups $G$ and $H$ can be solved in polynomial time, then word problem
for $G\wr H$ can be solved in polynomial time as well \cite{MiasnikovVW17}. This implies that subset sum for $G \wr H$ belongs to $\NP$.
Thus, we obtain:

\begin{thm}
         Let $G$ and $H$ be non-trivial finitely generated groups and assume that $H$ contains an element of infinite order.
         Then, the subset sum problem for $G \wr H$ is $\NP$-hard. If moreover, the word problem for 
        $G$ and $H$ can be solved in polynomial time, then  the subset sum problem for $G \wr H$ is $\NP$-complete.
 \end{thm}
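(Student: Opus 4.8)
The plan is to obtain both parts from facts already established. For the hardness part, the point is that $H$ contains an element $a$ of infinite order, so the cyclic subgroup $\langle a\rangle\le H$ is isomorphic to $\Z$; this lets us realise $G\wr\Z$ as a subgroup of $G\wr H$ and transfer the $\NP$-hardness of the subset sum problem for $G\wr\Z$, which holds for every non-trivial $G$ (as observed above). For membership in $\NP$, the point is that the word problem for $G\wr H$ is solvable in polynomial time once the word problems for $G$ and $H$ are, and that a $\{0,1\}$-valuation is a polynomial-size certificate.

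First I would make the embedding precise. Fix finite generating sets $\Sigma$ of $G$ and $\Gamma$ of $H$, choose $a\in H$ of infinite order, and set $K=\langle a\rangle\cong\Z$. Identifying $G^{(K)}$ with the set of those maps in $G^{(H)}$ whose support is contained in $K$, one checks that $G^{(K)}$ is invariant under the shift action of $K\le H$ on $G^{(H)}$ and that this action restricts to the natural action of $K$ on $G^{(K)}$. Hence $G^{(K)}\rtimes K\le G^{(H)}\rtimes K\le G^{(H)}\rtimes H=G\wr H$, i.e.\ $G\wr\Z\cong G\wr K$ is a subgroup of $G\wr H$. This embedding fixes $\Sigma$ and sends the generator of the $\Z$-factor to a fixed word over $\Gamma$ representing $a$. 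Therefore, substituting that fixed word for every occurrence of the $\Z$-generator (and of its inverse) transforms, in polynomial time, any subset sum instance over $G\wr\Z$ into a subset sum instance over $G\wr H$ with the same set of solutions in $\{0,1\}^k$. Since the subset sum problem for $G\wr\Z$ is $\NP$-hard for every non-trivial $G$, the first claim follows.

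For the second claim, assume the word problems for $G$ and $H$ can be solved in polynomial time; then so can the word problem for $G\wr H$ (see \cite{MiasnikovVW17}; in fact it is $\AC^0$-reducible to the word problems for $G$ and $H$). Given a subset sum instance $g_1,\dots,g_k,g\in G\wr H$, a nondeterministic machine guesses a tuple $(x_1,\dots,x_k)\in\{0,1\}^k$ and accepts iff $g_1^{x_1}\cdots g_k^{x_k}g^{-1}$ evaluates to $1$ in $G\wr H$; the word under test has length at most the total length of the input, so the test runs in polynomial time. Hence the subset sum problem for $G\wr H$ lies in $\NP$, and with the hardness part it is $\NP$-complete.

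I do not expect a genuine obstacle; the argument is short. The only points requiring a little care are the verification that $G\wr\langle a\rangle$ embeds into $G\wr H$ by a substitution of generators -- so that the reduction is polynomial-time and preserves $\{0,1\}$-valuations -- and the remark that it is essential for $a$ to have \emph{infinite} order: an element of finite order $n$ would only yield $G\wr\Z_n\le G\wr H$, which is not enough to transfer hardness.
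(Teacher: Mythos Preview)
Your argument is correct and matches the paper's own reasoning almost exactly: the paper derives the theorem from the preceding remark that the $\NP$-hardness reduction (from 3DM) for $\Knapsack(G\wr\Z)$ already yields $\NP$-hardness of subset sum for $G\wr\Z$, together with the fact that a polynomial-time word problem for $G$ and $H$ gives one for $G\wr H$, so a guessed $\{0,1\}$-valuation is a polynomial-time checkable certificate. The only minor difference is presentational: rather than explicitly spelling out the embedding $G\wr\langle a\rangle\hookrightarrow G\wr H$, the paper implicitly runs the 3DM reduction directly in $G\wr H$ using the infinite-order element $a\in H$ in place of the $\Z$-generator---but this amounts to the same thing as your substitution-of-generators reduction.
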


\section{Open problems}

Our results yield decidability of $\Knapsack(G\wr H)$ for almost all
groups $G$ and $H$ that are known to satisfy the necessary
conditions. However, we currently have no complete characterization of
those $G$ and $H$ for which $\Knapsack(G\wr H)$ is decidable.
 
Several interesting open problems concerning the complexity of knapsack for wreath products remain. 
We are confident that our $\NP$ upper bound for $\Knapsack(G \wr \Z)$, where $G$ is finitely generated
abelian, can be extended to $\Knapsack(G \wr F)$ for a finitely generated free group $G$ as well as to 
$\Knapsack(G \wr \Z^k)$. Another question is whether the assumption on $G$ being abelian can be weakened.
In particular, we want to investigate whether polynomial time algorithms exist for $\MEM(G^\rho_{\ast})$ for 
certain non-abelian groups $G$. 

The complexity of knapsack for free solvable groups is open as well. Our decidability proof uses the preservation of knapsack-semilinearity
under wreath products (\cref{thm-semilinear2}). Our construction in the proof of \cref{thm-semilinear2} adds for every application of the wreath
product a $\forall^* \exists^*$-quantifier prefix in the formula describing the solution set. Since a free solvable group of class $d$ and rank $r$ is embedded into a $d$-fold 
iterated wreath product of $\Z^r$, this leads to a $\Pi_{2(d-1)}$-formula (for $d=1$, we clearly have a $\Pi_0$-formula). The existence of a solution is then expressed
by a $\Sigma_{2d-1}$-formula. Haase~\cite{Haase14} has shown that the $\Sigma_{i+1}$-fragment of Presburger arithmetic
is complete for the $i$-th level of the so-called weak EXP hierarchy. In addition to the complexity resulting from the quantifier alternations in Presburger arithmetic, our algorithm incurs a doubly exponential increase in the formula size for each application of the wreath product.
This leads to the question whether there is a more efficient
algorithm for knapsack over free solvable groups.

Finally, we are confident that with our techniques from \cite{LohreyZetzsche2016a} one can also show preservation
of knapsack-semilinearity
under graph products.

%\bibliographystyle{plain}
%\bibliography{bib}

\appendix

\section{Hyperbolic groups} \label{appendix-hyperbolic}

Let $G$ be a finitely generated group with the finite symmetric generating set $\Sigma$.
The  Cayley-graph of $G$ (with respect to $\Sigma$) is the undirected  graph $\Gamma = \Gamma(G)$ with node set
$G$ and all edges $(g,ga)$ for $g \in G$ and $a \in \Sigma$. We view $\Gamma$ as a geodesic metric space,
where every edge $(g,ga)$ is identified with a unit-length interval. It is convenient to label the directed edge
from $g$ to $ga$ with the generator $a$.
The distance between two points $p,q$ is denoted with $d_\Gamma(p,q)$.
For $g \in G$ let $|g| = d_\Gamma(1,g)$.
For $r \geq 0$,  let
$B_r(1) = \{ g \in G \mid d_\Gamma(1,g) \leq r \}$.

%A path $P$ is a continuous mapping $P : [0,r] \to \Gamma_G$, where $r \in \mathbb{R}$, $r \geq 0$,
%We say that $P$ starts in $P(0)$ and ends in $P(r)$.
Given a word $w \in \Sigma^*$, one obtains a unique path $P[w]$  that starts in $1$
and is labelled with the word $w$. This path ends 
in the group element represented by $w$. More generally, for $g \in G$ we denote
with $g \cdot P[w]$ the path that starts in $g$  and is labelled with $w$.
We will only consider paths of the form $g \cdot P[w]$.
One views $g \cdot P[w]$ as a continuous mapping from the real interval $[0,|w|]$ to  $\Gamma$.
Such a path  $P : [0,n] \to \Gamma$ is geodesic if $d_\Gamma(P(0),P(n)) = n$; it is
a $(\lambda,\epsilon)$-quasigeodesic if for all points
$p = P(a)$ and $q = P(b)$ we have $|a-b| \leq \lambda \cdot d_\Gamma(p,q) + \varepsilon$. 
We say that a path $P : [0,n] \to \Gamma$ is path from $P(0)$ to $P(n)$.
A word $w \in \Sigma^*$ is geodesic if the path $P[w]$ is geodesic.

A geodesic triangle consists of three points $p,q,r, \in G$ and geodesic paths $P_{p,q}$, $P_{p,r}$, $P_{q,r}$
(the three sides of the triangle),
where $P_{x,y}$ is a path from $x$ to $y$.
For $\delta \geq 0$, the group $G$ is $\delta$-hyperbolic, if for every geodesic triangle,
every point $p$ on one of the three sides has distance at most $\delta$ from a point belonging to one 
of the two sides that are opposite of $p$. Finally, $G$ is hyperbolic, if it is  $\delta$-hyperbolic
for some $\delta \geq 0$. Finitely generated free groups are for instance $0$-hyperbolic.
The property of being hyperbolic is independent of the the chosen generating set. 
The word problem for every hyperbolic group is decidable in linear time. This allows to compute
for a given word $w$ an equivalent geodesic word; the best known algorithm is quadratic. 

Let us fix a $\delta$-hyperbolic group $G$ with the finite symmetric generating set $\Sigma$
for the further discussion.

\begin{lem}[c.f.~\mbox{\cite[8.21]{ghys1990groupes}}] \label{lemma-cyclic-words-quasi-geo}
Let $g \in G$ be of infinite order and let $n \geq 1$. 
Let $u$ be a geodesic word representing $g$.
Then the path  $P[u^n]$ is a $(\lambda,\epsilon)$-quasigeodesic, where 
$\lambda = |g| N$, $\epsilon = 2 |g|^2 N^2 + 2 |g| N$ and $N = |B_{2\delta}(1)|$. 
\end{lem}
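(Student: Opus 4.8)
\emph{Proof idea.}
This is the classical fact that an infinite-order element of a hyperbolic group has positive translation length, together with the resulting quasigeodesic bound; the lemma is essentially \cite[8.21]{ghys1990groupes}, and I would obtain it by first reducing to a growth estimate for the powers of $g$ and then proving that estimate by a pigeonhole argument. For the reduction: since $u$ is geodesic and represents $g$, the path $P[u^n]$ is a concatenation of $n$ geodesic segments, each of length $|g|=|u|$, with the $k$-th running from $g^{k-1}$ to $g^{k}$; in particular $P[u^n]$ visits $g^{k}$ at parameter $k|g|$. For parameters $0\le a\le b\le n|g|$, comparing $a$ and $b$ with the nearest such ``lattice parameters'' and applying the triangle inequality reduces the inequality $b-a\le\lambda\,d_\Gamma(P[u^n](a),P[u^n](b))+\epsilon$ to a lattice estimate of the shape $m|g|\le\lambda\,|g^{m}|+O(|g|N)$, i.e.\ to the statement that $|g^{m}|\ge m/N-O(1)$ for all $m\ge 0$ (which is vacuous for small $m$). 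So everything comes down to proving that the powers of $g$ escape linearly, at rate at least $1/N$.

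For this growth bound I would use three facts: (i) geodesic triangles in $G$ are $\delta$-thin; (ii) $B_{2\delta}(1)$ is finite, with $|B_{2\delta}(1)|=N$; and (iii) the $g$-equivariance $P[u^n](\tau+|g|)=g\cdot P[u^n](\tau)$, which holds because the restriction of $P[u^n]$ to $[(k+1)|g|,(k+2)|g|]$ is the $g$-translate of its restriction to $[k|g|,(k+1)|g|]$. Arguing by contradiction with a minimal counterexample, suppose $P[u^n]$ is not $(\lambda,\epsilon)$-quasigeodesic and choose $s<t$ with $t-s$ minimal among pairs violating the inequality; write $Q$ for the subpath of $P[u^n]$ over $[s,t]$ and $d=d_\Gamma(Q(s),Q(t))$. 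By minimality every proper subpath of $Q$ is a $(\lambda,\epsilon)$-quasigeodesic, and a local-to-global argument in the $\delta$-hyperbolic space $\Gamma$ then confines $Q$ to a bounded neighbourhood of the geodesic $[Q(s),Q(t)]$. Now pigeonhole: this neighbourhood, being covered by $O(d)$ balls of controlled radius, contains only boundedly many group elements per unit of $d$; recording for each integer parameter $\tau\in[s,t]$ the pair formed by the position of $Q(\tau)$ in such a ball and by $\tau\bmod|g|$ — of which, in each ``column'' along the geodesic, there are exactly $N$ geometrically available — shows that if $t-s$ exceeds $\lambda d+\epsilon$ then two parameters $a<b$ with $a\equiv b\pmod{|g|}$ have $Q(a)=Q(b)$; by the equivariance this gives $g^{(b-a)/|g|}=1$, contradicting $\ord(g)=\infty$. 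Hence $t-s\le\lambda d+\epsilon$, contradicting the choice of $s,t$.

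The main obstacle I anticipate is purely quantitative: making the pigeonhole produce exactly $\lambda=|g|N$ and $\epsilon=2|g|^{2}N^{2}+2|g|N$ rather than some cruder function of $\delta$ and $|g|$. This needs an explicit local-to-global estimate (so that the neighbourhood radius is controlled tightly enough that each column along the geodesic really contributes at most $N=|B_{2\delta}(1)|$ visits) and careful accounting of how translation by $g$ shifts the parameter by $|g|$. I would therefore first prove the qualitative statement — that \emph{some} $(\lambda,\epsilon)$ depending only on $\delta$ and $|g|$ works — and only then sharpen the combinatorics to recover the stated constants. Note that hyperbolicity enters only through iterated thinness of geodesic triangles, never through an assumed quasigeodesic property, so no circularity arises.
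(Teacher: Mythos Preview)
The paper does not prove this lemma: it is quoted from \cite[8.21]{ghys1990groupes} and used as a black box, so there is no in-paper argument to compare against directly.

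Your reduction of the $(\lambda,\epsilon)$-inequality for arbitrary parameters $a,b$ to a growth estimate $|g^{m}|\ge m/N-O(1)$ via rounding to the lattice points $g^{k}$ is correct and is indeed how the quasigeodesic statement follows once the growth estimate is in hand. The gap is in your proof of the growth estimate. You take a minimal failing subpath $Q$, note that its proper subpaths are $(\lambda,\epsilon)$-quasigeodesic, and then appeal to ``a local-to-global argument'' (equivalently, the Morse lemma applied to the two halves of $Q$) to confine $Q$ to a tube around $[Q(s),Q(t)]$. But the radius of that tube is the Morse/stability constant for $(\lambda,\epsilon)$-quasigeodesics, which depends on $\lambda,\epsilon,\delta$ and is far larger than $2\delta$. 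Your pigeonhole therefore counts $|B_{R}(1)|$ positions per unit length along the geodesic, not $N=|B_{2\delta}(1)|$, and the multiplicative constant you extract is $|g|\cdot|B_{R}(1)|$ rather than $|g|N$. Since $R$ already depends on $\lambda=|g|N$, the argument is circular; at best it produces \emph{some} constants, but ``sharpening the combinatorics'' cannot recover the stated ones, because the obstruction is geometric (the tube is too wide), not combinatorial. Your closing remark that ``hyperbolicity enters only through iterated thinness of geodesic triangles'' is precisely what you need, but local-to-global and the Morse lemma are \emph{consequences} of thinness with much worse constants, and it is those you actually invoke.

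The argument in the cited reference avoids this by never assuming any quasigeodesic property of $P[u^{n}]$ at all. One fixes $m$, takes a genuine geodesic from $1$ to $g^{m}$, and uses only $\delta$-thinness of suitable geodesic triangles with vertices among the $g^{k}$ to manufacture, for each $k$, an element of $B_{2\delta}(1)$; a coincidence among these (there being only $N$ possibilities) then forces $g^{j}=1$ for some $j\le N$, contradicting infinite order. That is where $N=|B_{2\delta}(1)|$ legitimately enters --- from thinness of actual geodesic triangles, not from a stability bound for quasigeodesics --- and the stated $\lambda=|g|N$, $\epsilon=2|g|^{2}N^{2}+2|g|N$ then drop out of the reduction step you already have.
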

Consider two paths $P_1 : [0,n_1] \to \Gamma$, $P_1 : [0,n_2] \to \Gamma$ and let $K$ be a positive
real number.
We say that $P_1$ and $P_2$ asynchronously $K$-fellow travel if there exist two continuous non-decreasing mappings
$\varphi_1 : [0,1] \to [0,n_1]$ and $\varphi_2 : [0,1] \to [0,n_2]$ such that $\varphi_1(0) = \varphi_2(0) = 0$, $\varphi_1(1) = n_1$,
$\varphi_2(1) = n_2$ and for all $0 \leq t \leq 1$, $d_\Gamma(P_1(\varphi_1(t)), P_2(\varphi_2(t))) \leq K$.
Intuitively, this means that one can travel along the paths $P_1$ and $P_2$ asynchronously with variable speeds such that
at any time instant the current points have distance at most $K$.

\begin{lem}[c.f.~\cite{MyNi14}] \label{lemma-asynch-fellow-travel}
Let $P_1$ and $P_2$ be $(\lambda,\epsilon)$-quasigeodesic paths in $\Gamma_G$ and assume that
$P_i$ starts in $g_i$ and ends in $h_i$. Assume that $d_\Gamma(g_1,h_1), d_\Gamma(g_2,h_2) \leq h$.
Then there exists a computable bound 
$K = K(\delta, \lambda,\epsilon, h) \geq h$ such that $P_1$ and $P_2$ 
asynchronously $K$-fellow travel.
\end{lem}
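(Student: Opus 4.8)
The plan is to deduce this from two classical, effective facts about $\delta$-hyperbolic spaces, applied to the Cayley graph of $G$: the stability of quasigeodesics (Morse lemma) and the coarse behaviour of nearest-point projection onto geodesics. Together these let us route both $P_1$ and $P_2$ through geodesics joining their endpoints. We read the hypothesis as saying that corresponding endpoints of $P_1$ and $P_2$ lie within distance $h$, i.e.\ $d_\Gamma(g_1,g_2)\le h$ and $d_\Gamma(h_1,h_2)\le h$ for the endpoints $g_i,h_i$ of $P_i$.

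I would first record two sub-claims. (a) In a $\delta$-hyperbolic space, a $(\lambda,\epsilon)$-quasigeodesic $P$ and a geodesic joining its two endpoints asynchronously $R'$-fellow travel, with $R'=R'(\delta,\lambda,\epsilon)$ computable. This is the effective Morse lemma, plus the observation that by following a continuous non-decreasing selection of the nearest-point projection of $P$ onto the geodesic one gets a \emph{monotone} matching of the two paths, not merely a bound on their Hausdorff distance. (b) In a $\delta$-hyperbolic space, two geodesics whose corresponding endpoints lie within distance $h$ asynchronously $C$-fellow travel for a computable $C=C(\delta,h)$ (for instance $C=2h+2\delta$, via $2\delta$-thinness of the geodesic quadrilateral spanned by the four endpoints together with coarse monotonicity of projection between geodesics). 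Now let $\gamma_i$ be a geodesic from $g_i$ to $h_i$; applying (a) to $(P_1,\gamma_1)$ and to $(P_2,\gamma_2)$, and (b) to $(\gamma_1,\gamma_2)$, and using that asynchronous fellow travelling composes with the constants adding — given the two monotone parametrizations obtained for each $\gamma_i$, one first finds monotone reparametrizations of $[0,1]$ synchronising them, then pulls the matchings with $P_i$ back along these — one obtains asynchronous $K$-fellow travelling of $P_1$ and $P_2$ with $K:=\max\{h,\ 2R'(\delta,\lambda,\epsilon)+C(\delta,h)\}$, which is computable from $\delta,\lambda,\epsilon,h$.

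The step I expect to be the real work is the monotone matching in sub-claim (a): one must parametrize the quasigeodesic so that it tracks the geodesic between its endpoints without backtracking, which is genuinely more than the Hausdorff bound supplied by the Morse lemma. It rests on the coarse monotonicity of nearest-point projection onto a geodesic along a quasigeodesic — a quasigeodesic cannot reverse its projection by more than a constant depending only on $\delta,\lambda,\epsilon$, since otherwise a sub-path of bounded endpoint-distance would be too long for the quasigeodesic inequality. This is classical and the constants are explicit, hence computable (see, e.g.\ \cite{ghys1990groupes}); alternatively one may simply cite \cite{MyNi14}, where the statement appears in precisely this form.
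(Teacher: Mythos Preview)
Your proposal is correct and follows the standard route to this fact. Note that the paper does not actually give a proof of this lemma: it is stated with a reference to \cite{MyNi14} and used as a black box, so there is no argument in the paper to compare against. You also correctly identify and repair the apparent typo in the hypothesis (the intended assumption is $d_\Gamma(g_1,g_2)\le h$ and $d_\Gamma(h_1,h_2)\le h$, as confirmed by the way the lemma is applied in the proof of \cref{lemma-k=2}). Your decomposition via the Morse lemma, thinness of the geodesic quadrilateral, and composition of asynchronous fellow-travel relations is exactly how this is done in the cited source, and your flagging of the monotonicity issue in sub-claim~(a) as the one genuinely nontrivial point is apt.
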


\subsection{Hyperbolic groups are knapsack-semilinear}

In this section, we prove the following result:

\begin{thm} \label{thm-hyperbolic-semilinear}
Every hyperbolic group is knapsack-semilinear.
\end{thm}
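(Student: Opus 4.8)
The plan is to reduce to torsion-free instances and then to analyse the broken path traced by a hypothetical solution, using the quasigeodesic machinery of \cref{lemma-cyclic-words-quasi-geo,lemma-asynch-fellow-travel}: the upshot will be that the blocks of the knapsack expression can interact in only boundedly many essentially different ways, each of which imposes linear constraints on the exponents. For the reduction, let $E$ be a knapsack expression over the $\delta$-hyperbolic group $G$; the conjugation identity displayed after \cref{kppowers-vs-expeq} lets us assume $E = g_1^{x_1}\cdots g_k^{x_k} g_{k+1}$. Since the word problem for $G$ is decidable we can compute, for each $i$, whether $g_i$ has finite order and if so its order $r_i$. For each tuple $(j_i)$ with $0\le j_i<r_i$ ranging over the torsion indices, substitute $x_i = r_i x_i' + j_i$; then $g_i^{x_i}=g_i^{j_i}$ becomes a constant absorbed into an adjacent factor and $x_i'$ disappears. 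Hence $\Sol(E)$ is a finite union of sets of the form $\{\nu : \nu(x_i)\equiv j_i \bmod r_i \text{ for all torsion } i\}\cap C$, where $C$ is the cylindrification (to all variables of $E$) of $\Sol(E')$ for a knapsack expression $E'$ all of whose base elements have infinite order. As semilinear sets are effectively closed under intersection, cylindrification and finite union, it suffices to prove the theorem when every $g_i$ has infinite order; assume this henceforth.

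Next I would set up the broken-path picture. Compute geodesic words $u_i$ representing $g_i$; by \cref{lemma-cyclic-words-quasi-geo} each power path $P[u_i^n]$ is a $(\lambda,\epsilon)$-quasigeodesic with $\lambda,\epsilon$ depending only on $\delta$ and the lengths $|g_i|$, so we may fix $\lambda,\epsilon$ uniformly. For a valuation $\nu$ with $n_i=\nu(x_i)$ write $v_i=g_1^{n_1}\cdots g_i^{n_i}$ and let $\pi_i=v_{i-1}\cdot P[u_i^{n_i}]$, so that $\pi=\pi_1\cdots\pi_k$ runs from $1$ to $v_k$ and $\nu\in\Sol(E)$ iff $v_k=g_{k+1}^{-1}$, a fixed element of the bounded ball $B_{|g_{k+1}|}(1)$. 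By the local-to-global principle for quasigeodesics in hyperbolic spaces there is a computable length $L_0$ such that, for computable $\lambda',\epsilon'$, the path $\pi$ is a $(\lambda',\epsilon')$-quasigeodesic unless some subpath of $\pi$ of length $\le L_0$ fails to be $(\lambda,\epsilon)$-quasigeodesic; as each $\pi_i$ is itself $(\lambda,\epsilon)$-quasigeodesic, such a failure can only occur in a bounded neighbourhood of some junction $v_i$. If $\pi$ is a $(\lambda',\epsilon')$-quasigeodesic, then $|\pi|\le\lambda'|g_{k+1}|+\epsilon'$ and all $n_i$ are bounded by a computable constant, contributing only a finite, hence semilinear, set of solutions.

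It remains to treat solutions where $\pi$ folds back at some junctions. Here \cref{lemma-asynch-fellow-travel} enters: at a junction where $\pi$ backtracks, suitable incident quasigeodesic subpaths asynchronously $K$-fellow travel for a long stretch (for the computable $K$ of that lemma), and the standard fact that two uniform quasigeodesics staying within distance $K$ of one another for long enough are ``parallel'' produces an element $c$ of bounded length and exponents $0<a,b\le B$, from a finite computable set, with $g_i^{a}=c\,g_{i+1}^{\pm b}\,c^{-1}$. Regrouping maximal runs of consecutive parallel blocks, the net contribution of such a run is a fixed affine function of the exponents occurring in it. One organises the foldings into a matching of the blocks (which can be taken non-crossing) together with a bounded unmatched ``core''; there are only finitely many such combinatorial patterns, and each must be compatible with $v_k=g_{k+1}^{-1}$. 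For a fixed pattern the solution set is linear: the matched (parallel) pairs impose linear equations on the exponents, while every exponent not involved in any parallel relation is bounded by a computable constant, since otherwise its block would contribute uncancelled quasigeodesic length and push $v_k$ outside $B_{|g_{k+1}|}(1)$. Taking the finite union over all patterns yields an effective semilinear representation of $\Sol(E)$.

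I expect the interaction analysis to be the main obstacle: converting the local failures of quasigeodesity at the junctions of $\pi$ into a globally coherent combinatorial pattern — in particular showing that the foldings can be matched non-crossingly, bounding the size of the unmatched core, and dealing with a single block that is parallel to several others — all while keeping every constant uniformly computable from $E$ and $\delta$. This is exactly where \cref{lemma-cyclic-words-quasi-geo,lemma-asynch-fellow-travel} together with $\delta$-hyperbolicity have to do the real work; the torsion reduction and the passage to a finite union of patterns are routine.
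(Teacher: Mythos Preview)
Your proposal is not a proof but an outline, and the outline has a real gap at exactly the place you flag. The step ``matched (parallel) pairs impose linear equations on the exponents'' is the whole content of the theorem and you do not establish it. Knowing that two quasigeodesic power segments $K$-fellow travel, and even extracting a commensurability relation $g_i^{a}=c\,g_j^{\pm b}\,c^{-1}$ with bounded $a,b,|c|$, does not by itself tell you that the set of pairs $(n_i,n_j)$ for which the two segments fit together into a closed path is semilinear; an additional argument is needed (this is precisely what \cref{lemma-k=2} does for depth two, by building a finite automaton on the ball $B_r(1)$). Your non-crossing matching picture is also unjustified: a single block can be close to several non-adjacent blocks, and the ``bounded unmatched core'' is asserted, not proved.

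The paper's argument is structurally different and avoids all of this. It proceeds by induction on the depth $k$: the base case $k\le 2$ is \cref{lemma-k=2}, and for $k\ge 3$ one uses thinness of quasigeodesic $(k{+}1)$-gons (every side lies in an $h$-neighbourhood of the union of the others, with $h$ computable) to cut the polygon along a short path into two polygons of strictly smaller depth. A finite case analysis on which side the cut lands on, together with quantification over the finitely many short words and over the split points of the periods $u_i$, expresses $\Sol(E)$ as a Presburger combination of solution sets of strictly shallower knapsack expressions. No global matching, no local-to-global principle, and no analysis of folding patterns is needed; the induction and \cref{lemma-k=2} do all the work.
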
   
Let us fix a $\delta$-hyperbolic group $G$ and let $\Sigma$ be a finite symmetric generating set for $G$.
We first consider knapsack instances of depth $2$.

\begin{lem} \label{lemma-k=2}
For all $g_1, h_1, g_2, h_2 \in G$ such that 
$g_1$ and $g_2$ have infinite order,
the set $\{ (x_1, x_2) \mid h_1 g_1^{x_1} = g_2^{x_2} h_2 \text{ in } G\}$ is effectively semilinear.
\end{lem}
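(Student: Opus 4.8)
The plan is to express the solution set as the Parikh image of a regular language, using the fellow-traveller property of quasigeodesics in hyperbolic groups. Since $g_1$ and $g_2$ have infinite order, I fix geodesic words $u_1,u_2$ representing $g_1,g_2$ and geodesic words $w_1,w_2$ representing $h_1,h_2$; by \cref{lemma-cyclic-words-quasi-geo} the paths $P[u_1^n]$ and $P[u_2^n]$ are $(\lambda,\epsilon)$-quasigeodesics for computable constants $\lambda,\epsilon$ independent of $n$. For a candidate pair $(x_1,x_2)$ I consider the two paths $P_1 := P[w_1 u_1^{x_1}]$, which runs from $1$ to $h_1g_1^{x_1}$, and $P_2 := P[u_2^{x_2} w_2]$, which runs from $1$ to $g_2^{x_2}h_2$. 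Attaching a segment of bounded length to a $(\lambda,\epsilon)$-quasigeodesic (and translating, which is an isometry) yields a quasigeodesic with computably adjusted constants $(\lambda',\epsilon')$, so both $P_1,P_2$ are $(\lambda',\epsilon')$-quasigeodesics. The equation $h_1g_1^{x_1}=g_2^{x_2}h_2$ holds precisely when $P_1$ and $P_2$ have the same endpoint; they always have the same start $1$.

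Next I apply \cref{lemma-asynch-fellow-travel}: when $(x_1,x_2)$ is a solution, $P_1$ and $P_2$ have coinciding start and end, so they asynchronously $K$-fellow travel for a computable $K=K(\delta,\lambda',\epsilon',0)$. Discretizing the two reparametrizations, I obtain a monotone lattice path through the grid $[0,N_1]\times[0,N_2]$, where $N_1=|w_1|+x_1|u_1|$ and $N_2=x_2|u_2|+|w_2|$ are the lengths of $P_1$ and $P_2$, along which at every visited grid point $(a,b)$ one has $d_\Gamma(P_1(a),P_2(b))\le K'$ for a slightly enlarged computable bound $K'$ (replacing the continuous matching by integer steps, each moving one of the two points across a single edge, costs only a bounded additive error).

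The heart of the argument is a finite-automaton construction. To a lattice position $(a,b)$ I associate the state consisting of: (i) the \emph{phase} of $a$ — namely whether $a<|w_1|$, recorded exactly, or else the residue $(a-|w_1|)\bmod|u_1|$ — and likewise the phase of $b$; and (ii) the element $z=P_1(a)^{-1}P_2(b)\in B_{K'}(1)$. Since $B_{K'}(1)$ is finite and effectively computable ($G$ has a decidable word problem), there are finitely many states. A step of the lattice path increments $a$, or $b$, or both, by one; the letter of $w_1u_1^{x_1}$ at position $a$ (resp.\ of $u_2^{x_2}w_2$ at position $b$) is determined by the phase, so the updated value of $z$ is determined by the old state, and whether it still lies in $B_{K'}(1)$ is decidable. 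Each time a full period of $u_1$ (resp.\ of $u_2$) is completed we emit an auxiliary symbol $c_1$ (resp.\ $c_2$). This yields a finite automaton $\mathcal B$: an accepting run starts in the state with $a=b=0$ and $z=1$ and ends in a state with $a=N_1$, $b=N_2$ and $z=1$, i.e.\ $P_1(N_1)=P_2(N_2)$, which is exactly the equation. By the previous paragraph every solution $(x_1,x_2)$ produces an accepting run emitting $c_1$ exactly $x_1$ times and $c_2$ exactly $x_2$ times; conversely, any accepting run manifestly exhibits a solution with these counts. Hence the set in question equals the image of $L(\mathcal B)$ under the Parikh map in the letters $c_1,c_2$, which is effectively semilinear (rational subsets of $\N^2$ are semilinear \cite{EiSchu69}), and the transformations between this and a semilinear representation are effective.

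The step I expect to be the main obstacle is the faithful discretization of the asynchronous fellow-traveller relation: one must check that the continuous $K$-matching between $P_1$ and $P_2$ can be replaced by a monotone lattice path on which consecutive matched grid points differ by a group element of bounded length, and that the phase bookkeeping correctly handles the seam between the geodesic prefix/suffix and the periodic middle (including the degenerate cases $x_i=0$). Everything else — computability of the hyperbolic constants and of geodesic representatives, decidability of membership in $B_{K'}(1)$, and effectiveness of the passage from $\mathcal B$ to a semilinear representation — is routine.
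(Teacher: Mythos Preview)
Your proposal is correct and follows the same strategy as the paper: invoke \cref{lemma-cyclic-words-quasi-geo,lemma-asynch-fellow-travel} to get asynchronous $K$-fellow-travelling of the two sides, discretize, and realize the solution set as the Parikh image of a regular language over $\N^2$. The paper's implementation is leaner in two respects: rather than absorbing $h_1,h_2$ into the paths (which forces the extra argument that appending a bounded geodesic preserves quasigeodesicity, and then edge-level phase bookkeeping with a nondeterministic switch into the $w_2$-suffix), it applies \cref{lemma-asynch-fellow-travel} directly to the periodic paths $h_1\cdot P[u_1^{n_1}]$ and $P[u_2^{n_2}]$ with endpoint gap $h=\max(|h_1|,|h_2|)$; and it discretizes at full periods $p_{1,i}=h_1g_1^{i}$, $p_{2,j}=g_2^{j}$ rather than at single edges, so the automaton has state set $B_{K+|u_1|+|u_2|}(1)$ (recording $p_{2,j}^{-1}p_{1,i}$), initial state $h_1$, final state $h_2$, and transitions labelled $(1,0),(0,1),(1,1)$ given by $q=pg_1$, $p=g_2q$, $pg_1=g_2q$ --- no phases needed.
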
  

\begin{proof}
The semilinear subsets of $\N^k$ are exactly the rational subsets of $\N^k$ \cite{EiSchu69}.
A subset $A \subseteq \N^k$ is rational if it is a homomorphic image of a regular set of words.
In other words, there exists a finite automaton with transitions labeled by elements of $\N^k$
such that $A$ is the set of $v \in \N^k$ that are obtained by summing the transition labels along a 
path from the initial state to a final state. We prove that the set 
$\{ (x_1, x_2) \mid h_1 g_1^{x_1} = g_2^{x_2} h_2 \text{ in } G\}$ is effectively rational.

Let $u_i$ be a geodesic word representing $g_i$ and let $\ell_i = |u_i|$.  Assume that 
$n_1, n_2 \geq 1$ are such that $h_1 g_1^{n_1} = g_2^{n_2} h_2$.
Let $P_1 = h_1 \cdot P[u_1^{n_1}]$ and let 
$P_2 = P[u_2^{n_2}]$.
By \cref{lemma-cyclic-words-quasi-geo}, $P_1$ and $P_2$ are 
$(\lambda,\epsilon)$-quasigeodesics, where $\lambda$ and $\epsilon$
only depend on $\delta$, $|u_1|$ and $|u_2|$.
By \cref{lemma-asynch-fellow-travel}, 
the paths  $P_1$ and $P_2$ asynchronously $K$-fellow travel, where $K$ 
is a computable bound that only depends on $\delta$, $\lambda$, $\epsilon$,
$|g_1|$, $|h_1|$, $|g_2|$, $|h_2|$.
Let $\varphi_1 : [0,1] \to [0, n_1 \cdot \ell_1]$ and $\varphi_2 : [0,1] \to [0, n_2 \cdot  \ell_2]$
be the corresponding continuous non-decreasing  mappings. 

Let $p_{1,i} = h_1 g_1^{i} = P_1(i \cdot \ell_1)$ for $0 \leq i \leq n_1$ and 
$p_{2,j} = g_2^j = P_2(j \cdot \ell_2)$ for $0 \leq j \leq n_2$. Thus, $p_{1,i}$ is a point on $P_1$
and $p_{2,j}$ is a point on $P_2$. 
We define the binary relation $R \subseteq \{ p_{1,i} \mid 0 \leq i \leq n_1 \} \times \{ p_{2,j} \mid 0 \leq j \leq n_2 \}$
by 
$$
R = \{ (p_{1,i}, p_{2,j}) \mid \exists r \in [0,1] :  \varphi_1(r) \in [i \cdot \ell_1, (i+1) \cdot\ell_1), \varphi_2(r) \in [j \cdot \ell_2, (j+1) \cdot \ell_1) \} .
$$
Thus, we take  all pairs $(P_1(\varphi_1(r)), P_2(\varphi_2(r)))$, and push the first (resp., second) point in this pair back along $P_1$ (resp., $P_2$)
to the next point $p_{1,i}$ (resp., $p_{2,j}$). Then $R$ has the following properties:
\begin{itemize}
\item $(0,0), (n_1, n_2) \in R$
\item If $(p_{1,i}, p_{2,j}) \in R$ and $(i,j) \neq (n_1, n_2)$ then one of the following pairs also belongs to $R$: 
$(p_{1,i+1}, p_{2,j})$, $(p_{1,i}, p_{2,j+1})$, $(p_{1,i+1}, p_{2,j+1})$.
\item If $(p_{1,i}, p_{2,j}) \in R$, then $d_\Gamma(p_{1,i}, p_{2,j}) \leq K + \ell_1 + \ell_2$.
\end{itemize}
Let $r = K + \ell_1 + \ell_2$.
We can now construct a finite automaton over $\mathbb{N} \times \mathbb{N}$ that accepts
the set $\{ (x_1, x_2) \mid h_1 g_1^{x_1} = g_2^{x_2} h_2 \text{ in } G\}$. 
The set of states consists of $B_r(1)$. The initial state is $h_1$, the final state is $h_2$.
Finally, the transitions are the following:
\begin{itemize}
\item $p \xrightarrow{(0,1)} q$ for $p,q \in B_r(1)$ if $p = g_2 q$
\item $p \xrightarrow{(1,0)} q$ for $p,q \in B_r(1)$ if $p g_1 = q$
\item $p \xrightarrow{(1,1)} q$ for $p,q \in B_r(1)$ if $p g_1 = g_2 q$
\end{itemize}
By the above consideration, it is clear that this automaton accepts the set 
$\{ (x_1, x_2) \mid h_1 g_1^{x_1} = g_2^{x_2} h_2 \text{ in } G\}$.
\end{proof}
We can now prove \cref{thm-hyperbolic-semilinear}.

\begin{proof}[Proof of \cref{thm-hyperbolic-semilinear}]
Consider a knapsack expression $E = v_1 u_1^{x_1} v_2 u_2^{x_2} v_3 \cdots u_{k}^{x_{k}} v_{k+1}$.
%W.l.o.g. assume that the $u_i$ and $v_i$ are geodesic words.
We want to show that the set of all  solutions of $E=1$ is a semilinear subset of $\mathbb{N}^k$. 
For this we construct a Presburger formula with free variables $x_1, \ldots, x_k$
that is equivalent to $E=1$. We do this by induction on the depth $k$. Therefore, we can use 
in our Presburger formula also knapsack equations of the form $F=1$, where $F$ has depth
at most $k-1$.

Let $g_i \in G$ be the group element represented by the word $u_i$. 
In a hyperbolic group the order of torsion elements is bounded by a fixed constant
that only depends on the group, see also the proof of \cite[Theorem~6.7]{MyNiUs14}).
This allows to check for each $g_i$ whether it has finite order, and to compute the order
in the positive case. Assume that $g_i$ has finite order $m_i$.
We then produce for every number $0 \leq d \leq m_i-1$ a knapsack instance
of depth $k-1$ by replacing $u_i^{x_i}$ by $u_i^d$, which by induction can be transformed
into an equivalent Presburger formula. We then take the disjunction of all these
Presburger formulae for all  $0 \leq d \leq m_i-1$. A similar argument shows that it suffices to construct 
a Presburger formula describing all solutions in $\mathbb{N}_+^k$ (where $\mathbb{N}_+ = \mathbb{N} \setminus \{0\}$).

By the above discussion, we 
can assume that all $u_i$ represent group elements of infinite order.
The case that $k \leq 2$ is covered by \cref{lemma-k=2}. Hence, we assume that
$k \geq 3$. By the above remark, we only need to consider
valuations $\nu$ such that $\nu(x_i)>0$ for all $i \in [1,k]$.  
Moreover, we can assume that $E$ has the form $u_1^{x_1} \cdots u_{k}^{x_{k}} v$,
where all $u_i$ and $v$ are geodesic words.
By \cref{lemma-cyclic-words-quasi-geo}
for every valuation $\nu$, all words $u_i^{\nu(x_i)}$ are $(\lambda,\epsilon)$-quasigeodesics
for certain constants $\lambda$ and $\epsilon$.

Consider a solution $\nu$ and let $n_i = \nu(x_i)$ for $i \in [1,k]$. 
Consider the polygon obtained by traversing the closed path labelled with 
$u_1^{x_1} \cdots u_{k}^{x_{k}} v$. We partition this path into segements 
$P_1, \ldots, P_k,Q$, where 
$P_i$ is the subpath labelled with $u_i^{n_i}$ and $Q$ is the subpath labelled with $v$.
We consider these subpaths as the sides of a $(k+1)$-gon, see \cref{polygon}.
Since all sides of this $(k+1)$-gon are $(\lambda,\epsilon)$-quasigeodesics,
we can apply \cite[Lemma 6.4]{MyNiUs14}: Every side of the $(k+1)$-gon is contained in the 
$h$-neighborhoods of the other sides, where $h = (\kappa+\kappa\log(k+1))$ for a constant $\kappa$ that only depends
on the constants $\delta, \lambda, \varepsilon$. 

\newlength{\R}\setlength{\R}{2.7cm}
\begin{figure}[t]
\centering
\begin{tikzpicture}
  %% Try to define a style, to prevent typing
  [inner sep=.5mm,
  minicirc/.style={circle,draw=black,fill=black,thick}]

  %% Now do the circle with nodes
  \node (circ1) at ( 60:\R) [minicirc] {};
  \node (circ2) at (120:\R) [minicirc] {};
  \node (circ3) at (180:\R) [minicirc] {};
  \node (circ4) at (240:\R) [minicirc] {};
  \node (circ5) at (300:\R) [minicirc] {};
  \node (circ6) at (360:\R) [minicirc] {};

  %% Connect those circs
  \draw [thick] (circ1) to [out=225, in=-45] node[above=1mm] {$u_2^{n_2}$} (circ2) to [out=285, in=15] node[above=0.5mm,left=0.5mm] {$u_1^{n_1}$} (circ3) 
  to [out=345, in=75] node[below=0.5mm,left=0.5mm] {$v$} (circ4) to [out=45, in=135] node[below=1mm] {$u_5^{n_5}$} (circ5) to [out=105, in=195] node[below=0.5mm,right=0.5mm] {$u_4^{n_4}$} (circ6) to [out=165, in=255] node[right=1mm] {$u_3^{n_3}$} (circ1);
\end{tikzpicture}
\caption{\label{polygon} The $(k+1)$-gon for $k = 5$ from the proof of \cref{thm-hyperbolic-semilinear}}
\end{figure}

Let us now consider the side $P_2$ of the quasigeodesic $(k+1)$-gon. It is labelled
with $u_2^{x_2}$. Its neighboring  sides are $P_1$ and $P_3$ (recall that $k \geq 3$) 
and are labelled with $u_1^{x_1}$ and $u_3^{x_3}$.\footnote{We take the side $P_2$ since
$Q$ is not a neighboring side of $P_2$. This avoids some additional cases in the following case distinction.}
We now distinguish the following cases. In each case we cut the $(k+1)$-gon into smaller 
pieces along paths of length $\leq h$, and these smaller pieces will correpsond to knapsack
instances of smaller depth. When we speak of a point on the $(k+1)$-gon, we mean a node of the Cayley graph
(i.e., an element of the group $G$) and not a point in the interior of an edge.
Moreover, when we peak of the successor point of a point $p$, we refer to the clockwise order on the 
$(k+1)$-gon, where the sides are traversed in the order $P_1, \ldots, P_k,Q$.

\medskip
\noindent
{\em Case 1:} There is a point on $p \in P_2$ that has distance at most $h$ from a node $q \in P_4 \cdots P_k$. Let us assume that 
$q \in P_i$ where $i \in [4,k]$.
We now construct two new knapsack instances $F_t$ and $G_t$ for all words $w \in \Sigma^*$ of length at most $h$
and all factorizations $u_2 = u_{2,1} u_{2,2}$ and $u_i = u_{i,1} u_{i,2}$, where $t = (i,w,u_{2,1},u_{2,2},u_{i,1},u_{i,2})$:
\begin{eqnarray*}
F_t &=& u_1^{x_1} u_2^{y_2} (u_{2,1} w u_{i,2}) u_i^{z_i} u_{i+1}^{x_{i+1}} \cdots u_k^{x_k} v \ \text{ and } \\
G_t  &=& u_{2,2} u_2^{z_2} u_3^{x_3} \cdots u_{i-1}^{x_{i-1}} u_i^{y_i} (u_{i,1} w^{-1})
\end{eqnarray*}
Here $y_2,z_2,y_i,z_i$ are new variables.
The situation looks as follows, where the case $i=k=5$ is shown:
\newlength{\Q}\setlength{\Q}{3.2cm}
%\begin{figure}[t]
\begin{center}
\begin{tikzpicture}
  %% Try to define a style, to prevent typing
  [inner sep=.5mm,
  minicirc/.style={circle,draw=black,fill=black,thick}]

 \tikzstyle{small} = [circle,draw=black,fill=black,inner sep=.3mm]
  %% Now do the circle with nodes
  \node (circ1) at ( 60:\Q) [minicirc] {};
  \node (circ2) at (120:\Q) [minicirc] {};
  \node (circ3) at (180:\Q) [minicirc] {};
  \node (circ4) at (240:\Q) [minicirc] {};
  \node (circ5) at (300:\Q) [minicirc] {};
  \node (circ6) at (360:\Q) [minicirc] {};

  %% Connect those circs
  \draw [thick] (circ1) to [out=225, in=-45] node[pos=.3,small] {} node[pos=.5,small]  (a) {} node[pos=.7,small] {} 
     node[above=0.2mm,pos=.4] {$u_{2,2}$} node[above=0.2mm,pos=.6] {$u_{2,1}$} node[pos=0.05,above=1mm,left=1mm] {$u_2^{z_2}$}
     node[pos=0.95,above=1mm,right=1mm] {$u_2^{y_2}$}
     (circ2) to [out=285, in=15] node[above=0.5mm,left=0.5mm] {$u_1^{x_1}$} (circ3) 
  to [out=345, in=75] node[below=0.5mm,left=0.5mm] {$v$} (circ4) to [out=45, in=135] 
   node[pos=.3,small] {} node[pos=.5,small]  (b){} node[pos=.7,small] {} 
     node[below=0.2mm,pos=.4] {$u_{5,2}$} node[below=0.2mm,pos=.6] {$u_{5,1}$} node[pos=0.05,below=1mm,right=1mm] {$u_5^{z_5}$}
     node[pos=0.95,below=1mm,left=1mm] {$u_5^{y_5}$}  (circ5) to [out=105, in=195] node[below=0.5mm,right=0.5mm] {$u_4^{x_4}$} (circ6) to [out=165, in=255] node[right=1mm] {$u_3^{x_3}$} (circ1);
     
     \draw (a) to node[left=.8mm] {$w$} (b);
\end{tikzpicture}
%\end{figure} 
\end{center}
Note that $F_t$ and $G_t$ have depth at most $k-1$. Lets say that a tuple
$t = (i,w,u_{2,1},u_{2,2},u_{i,1},u_{i,2})$ is valid for case 1 if 
$i  \in [4,k]$, $w \in \Sigma^*$, $|w| \leq h$,
$u_2 = u_{2,1} u_{2,2}$ and $u_i = u_{i,1} u_{i,2}$.
Moreover, let $A_1$ be the following formula, where $t$ ranges over all tuples that are valid for case 1,
and $i$ is the first component of the tuple $t$:
\[
A_1 =  \bigvee_t \exists y_2,z_2,y_i,z_i : x_2 = y_2 + 1 + z_2 \wedge x_i  = y_i + 1 + z_i \wedge F_t = 1 \wedge G_t = 1
\]
{\em Case 2:} There is a point on $p \in P_2$ that has distance at most $h$ from a node $q \in Q$.
We construct two new knapsack instances $F_t$ and $G_t$ for all words $w \in \Sigma^*$ of length at most $h$
and all factorizations $u_2 = u_{2,1} u_{2,2}$ and $v = v_1 v_2$, where $t = (w,u_{2,1},u_{2,2},v_1,v_2)$:
\begin{eqnarray*}
F_t &=& u_1^{x_1} u_2^{y_2} (u_{2,1} w v_2)  \ \text{ and } \\
G_t  &=& u_{2,2} u_2^{z_2} u_3^{x_3} \cdots u_k^{x_k} (v_1 w^{-1})
\end{eqnarray*}
As in case 1, $y_2,z_2$ are new variables and 
$F_t$ and $G_t$ have depth at most $k-1$. The situation looks as follows:
\begin{center}
\begin{tikzpicture}
  %% Try to define a style, to prevent typing
  [inner sep=.5mm,
  minicirc/.style={circle,draw=black,fill=black,thick}]

 \tikzstyle{small} = [circle,draw=black,fill=black,inner sep=.3mm]
  %% Now do the circle with nodes
  \node (circ1) at ( 60:\Q) [minicirc] {};
  \node (circ2) at (120:\Q) [minicirc] {};
  \node (circ3) at (180:\Q) [minicirc] {};
  \node (circ4) at (240:\Q) [minicirc] {};
  \node (circ5) at (300:\Q) [minicirc] {};
  \node (circ6) at (360:\Q) [minicirc] {};

  %% Connect those circs
  \draw [thick] (circ1) to [out=225, in=-45] node[pos=.3,small] {} node[pos=.5,small]  (a) {} node[pos=.7,small] {} 
     node[above=0.2mm,pos=.4] {$u_{2,2}$} node[above=0.2mm,pos=.6] {$u_{2,1}$} node[pos=0.05,above=1mm,left=1mm] {$u_2^{z_2}$}
     node[pos=0.95,above=1mm,right=1mm] {$u_2^{y_2}$}
     (circ2) to [out=285, in=15] node[above=0.5mm,left=0.5mm] {$u_1^{x_1}$} (circ3) 
  to [out=345, in=75] node[below=1mm,pos=.25] {$v_2$} node[left=.2mm,pos=.75] {$v_1$}  node[pos=.5,small]  (b){} (circ4) to [out=45, in=135]  
  node[below=1mm] {$u_5^{x_5}$}
    (circ5) to [out=105, in=195] node[below=0.5mm,right=0.5mm] {$u_4^{x_4}$} (circ6) to [out=165, in=255] node[right=1mm] {$u_3^{x_3}$} (circ1);
     
     \draw (a) to [out=270, in=45] node[left=.8mm] {$w$} (b);
\end{tikzpicture}
%\end{figure} 
\end{center}
We say that a tuple
$t = (w,u_{2,1},u_{2,2},v_1,v_2)$ is valid for case 2 if 
$w \in \Sigma^*$, $|w| \leq h$,
$u_2 = u_{2,1} u_{2,2}$ and $v = v_1 v_2$.
Moreover, let $A_2$ be the following formula, where $t$ ranges over all tuples that are valid for case 2:
\[
A_2 =  \bigvee_t \exists y_2,z_2 : x_2 = y_2 + 1 + z_2 \wedge F_t = 1 \wedge G_t = 1
\]
{\em Case 3:} Every point $p \in P_2$ has distance at most $h$ from a point on $P_1$.
Let $q$ be the unique point in $P_2 \cap P_3$ and let $p \in P_1$ be a point with $d_\Gamma(p,q) \leq h$.
We construct two new knapsack instances $F_t$ and $G_t$ for all words $w \in \Sigma^*$ of length at most $h$
and all factorizations $u_1 = u_{1,1} u_{1,2}$, where $t = (w,u_{1,1},u_{1,2})$:
\begin{eqnarray*}
F_t &=& u_1^{y_1} (u_{1,1} w) u_3^{x_3} \cdots u_k^{x_k} v  \ \text{ and } \\
G_t  &=& u_{1,2} u_1^{z_1} u_2^{x_2} w^{-1}
\end{eqnarray*}
Since $k \geq 3$,  $F_t$ and $G_t$ have depth at most $k-1$. 
The situation looks as follows:
\begin{center}
\begin{tikzpicture}
  %% Try to define a style, to prevent typing
  [inner sep=.5mm,
  minicirc/.style={circle,draw=black,fill=black,thick}]

 \tikzstyle{small} = [circle,draw=black,fill=black,inner sep=.3mm]
  %% Now do the circle with nodes
  \node (circ1) at ( 60:\Q) [minicirc] (b) {};
  \node (circ2) at (120:\Q) [minicirc] {};
  \node (circ3) at (180:\Q) [minicirc] {};
  \node (circ4) at (240:\Q) [minicirc] {};
  \node (circ5) at (300:\Q) [minicirc] {};
  \node (circ6) at (360:\Q) [minicirc] {};

  %% Connect those circs
    \draw [thick] (circ1) to [out=225, in=-45] node[above=1mm] {$u_2^{n_2}$} (circ2) to [out=285, in=15] node[pos=.3,small] {} node[pos=.5,small]  (a) {}   node[pos=.7,small] {} 
     node[above=1mm,left=.1mm,pos=.4] {$u_{1,2}$} node[above=2mm,left=0mm,pos=.6] {$u_{1,1}$} node[pos=0.15,left=.1mm] {$u_1^{z_1}$}
     node[pos=0.9,above=.5mm] {$u_1^{y_1}$} (circ3) 
  to [out=345, in=75] node[below=0.5mm,left=0.5mm] {$v$} (circ4) to [out=45, in=135] node[below=1mm] {$u_5^{x_5}$} (circ5) to [out=105, in=195] node[below=0.5mm,right=0.5mm] {$u_4^{x_4}$} (circ6) to [out=165, in=255] node[right=1mm] {$u_3^{x_3}$} (circ1);
     
     \draw (a) to [out=-30, in=-120] node[left=.8mm] {$w$} (b);
\end{tikzpicture}
%\end{figure} 
\end{center}
We say that a triple
$t = (w,u_{1,1},u_{1,2})$ is valid for case 3 if 
$w \in \Sigma^*$, $|w| \leq h$ and $u_1 = u_{1,1} u_{1,2}$.
Moreover, let $A_3$ be the following formula, where $t$ ranges over all tuples that are valid for case 3:
\[
A_3 =  \bigvee_t \exists y_1,z_1 : x_1 = y_1 + 1 + z_1 \wedge F_t = 1 \wedge G_t = 1
\]
{\em Case 4:} Every point $p \in P_2$ has distance at most $h$ from a point on $P_3$. This case is of course
completely analogous to case 3 and yields a corresponding formula $A_4$.

\medskip
\noindent
{\em Case 5:} Every point $p \in P_2$ has distance at most $h$ from a point on $P_1 \cup P_3$
but $P_2$ is neither contained in the $h$-neighborhood of $P_1$ nor in the $h$-neighborhood of $P_3$.
Hence there exists points $p_1, p_3 \in P_2$ which are connected by an edge and such that $p_1$ has 
distance at most $h$ from $P_1$ and $p_3$ has distance at most $h$ from $P_3$. Therefore, $p_1$ has
distance at most $h+1$ from $P_1$ as well as distance at most $h+1$ from $P_3$.
We construct three new knapsack instances $F_t$, $G_t$, $H_t$ for all words $w_1,w_2 \in \Sigma^*$ with 
$|w_1|, |w_2| \leq h+1$
and all factorizations $u_1 = u_{1,1} u_{1,2}$, $u_2 = u_{2,1} u_{2,2}$, and 
$u_3 = u_{3,1} u_{3,2}$, where $t = (w_1,w_2,u_{1,1},u_{1,2},u_{2,1},u_{2,2},u_{3,1},u_{3,2})$:
\begin{eqnarray*}
F_t &=& u_1^{y_1} (u_{1,1} w_1 w_2 u_{3,2}) u_3^{z_3} u_4^{x_4} \cdots u_k^{x_k} v, \\
G_t &=& u_{1,2} u_1^{z_1} u_2^{y_2} u_{2,1} w_1^{-1}, \\
H_t &=& u_{2,2} u_2^{z_2} u_3^{y_3} u_{3,1} w_2^{-1}
\end{eqnarray*}
Since $k \geq 3$,  $F_t$, $G_t$ and $H_t$ have depth at most $k-1$. 
The situation looks as follows:
\begin{center}
\begin{tikzpicture}
  %% Try to define a style, to prevent typing
  [inner sep=.5mm,
  minicirc/.style={circle,draw=black,fill=black,thick}]

 \tikzstyle{small} = [circle,draw=black,fill=black,inner sep=.3mm]
  %% Now do the circle with nodes
  \node (circ1) at ( 60:\Q) [minicirc]  {};
  \node (circ2) at (120:\Q) [minicirc] {};
  \node (circ3) at (180:\Q) [minicirc] {};
  \node (circ4) at (240:\Q) [minicirc] {};
  \node (circ5) at (300:\Q) [minicirc] {};
  \node (circ6) at (360:\Q) [minicirc] {};

  %% Connect those circs
    \draw [thick] (circ1) to [out=225, in=-45]  node[pos=.3,small] {} node[pos=.5,small]  (a) {} node[pos=.7,small] {} 
     node[above=0.2mm,pos=.4] {$u_{2,2}$} node[above=0.2mm,pos=.6] {$u_{2,1}$} node[pos=0.05,above=1mm,left=1mm] {$u_2^{z_2}$}
     node[pos=0.95,above=1mm,right=1mm] {$u_2^{y_2}$} (circ2) to [out=285, in=15] node[pos=.3,small] {} node[pos=.5,small]  (b) {}   node[pos=.7,small] {} 
     node[above=1mm,left=.1mm,pos=.4] {$u_{1,2}$} node[above=2mm,left=0mm,pos=.6] {$u_{1,1}$} node[pos=0.15,left=.1mm] {$u_1^{z_1}$}
     node[pos=0.9,above=.5mm] {$u_1^{y_1}$} (circ3) 
  to [out=345, in=75] node[below=0.5mm,left=0.5mm] {$v$} (circ4) to [out=45, in=135] node[below=1mm] {$u_5^{x_5}$} (circ5) to [out=105, in=195] node[below=0.5mm,right=0.5mm] {$u_4^{x_4}$} (circ6) to [out=165, in=255] node[pos=.3,small] {} node[pos=.5,small]  (c) {}   node[pos=.7,small] {} 
     node[pos=.4,right=.5mm] {$u_{3,2}$} node[pos=.6,right=.5mm] {$u_{3,1}$} node[pos=0.25,right=1mm] {$u_3^{z_3}$}
     node[pos=0.85,right=.5mm] {$u_3^{y_3}$} (circ1);
     
     \draw (a) to [out=-90, in=-45] node[above=.8mm] {$w_1$} (b);
      \draw (a) to [out=-90, in=-135] node[above=.8mm] {$w_2$} (c);
\end{tikzpicture}
%\end{figure} 
\end{center}
We say that a tuple
$t = (w_1,w_2,u_{1,1},u_{1,2},u_{2,1},u_{2,2},u_{3,1},u_{3,2})$ is valid for case 5 if 
$w_1,w_2 \in \Sigma^*$,
$|w_1|,|w_2| \leq h+1$, $u_1 = u_{1,1} u_{1,2}$, $u_2 = u_{2,1} u_{2,2}$, and 
$u_3 = u_{3,1} u_{3,2}$.
Moreover, let $A_5$ be the following formula, where $t$ ranges over all tuples that are valid for case 5:
\begin{align*}
A_5 =  \bigvee_t \exists y_1,z_1,y_2,z_2,y_3,z_3 : \ & x_1 = y_1 + 1 + z_1 \wedge x_2 = y_2 + 1 + z_2 \wedge x_3 = y_3 + 1 + z_3 \; \wedge \\
& F_t = 1 \wedge G_t = 1  \wedge H_t = 1 .
\end{align*}
Our final formula is $A_1 \vee A_2 \vee A_3 \vee A_4 \vee A_5$. It is easy to check that a valuation 
$\nu : \{ x_1, \ldots, x_k \}$ satisfies $\nu(E)=1$ if and only if $\nu$ makes $A_1 \vee A_2 \vee A_3 \vee A_4 \vee A_5$ true.
If $\nu(E)=1$ holds, then one of the above five cases holds, in which case $\nu$ makes the corresponding
formula $A_i$ true. Vice versa, if $\nu$ makes one of the formulas $A_i$ true then $\nu(E)=1$ holds.
\end{proof}

\end{document}